\def\R {\mathbb{R}}
\def\N {\mathbb{N}}
\def\C {\mathcal{C}}
\def\eps{\varepsilon}
\renewcommand{\div}{\mathrm{div}}
\renewcommand{\Re}{\mathrm{Re}}
\newcommand{\id}{\mathrm{Id}}
\newcommand{\elle}{\mathcal{L}}
\DeclareMathOperator{\dist}{dist}
\newtheorem{proposition}{Proposition}[section]
\newtheorem{theorem}[proposition]{Theorem}
\newtheorem*{theorem*}{Theorem}
\newtheorem{lemma}[proposition]{Lemma}
\newtheorem*{claim}{Claim}
\theoremstyle{definition}
\newtheorem{definition}[proposition]{Definition}
\newtheorem{remark}[proposition]{Remark}
\numberwithin{equation}{section}
\title[Prey-predators, Part I]{Predators-prey models with competition \\ Part I: existence, bifurcation and qualitative properties}
\author{Henri Berestycki}
\email{hb@ehess.fr}
\address{\'{E}cole des Hautes \'{E}tudes en Sciences Sociales, PSL Research University Paris, Centre d'analyse et de math\'{e}matique sociales (CAMS), CNRS, 54 bouvelard Raspail, 75006, Paris}
\author{Alessandro Zilio}
\email{azilio@math.univ-paris-diderot.fr}
\address{Universit\'{e} Paris Diderot - Paris 7, Laboratoire J.-L.\ Lions (CNRS UMR 7598), Paris, France, 8 place Aur\'elie Nemours, 75205, Paris CEDEX 13}
\subjclass[2010]{Primary: 35Q92; secondary: 35A01, 35B25, 35B32, 92D50}
\keywords{systems of parabolic equations, asymptotic analysis, stability of solutions, bifurcation analysis, non-constant solutions, competition, optimization.}
\begin{document}

\begin{abstract}
We study a mathematical model of environments populated by both preys and predators, with the possibility for predators to actively compete for the territory. For this model we study existence and uniqueness of solutions, and their asymptotic properties in time, showing that the solutions have different behavior depending on the choice of the parameters. We also construct heterogeneous stationary solutions and study the limits of strong competition and abundant resources. We then use these information to study some properties such as the existence of solutions that maximize the total population of predators. We prove that in some regimes the optimal solution for the size of the total population contains two or more groups of competing predators.
\end{abstract}

\maketitle

\section{Introduction}

Systems of reaction-diffusion equations are ubiquitous in mathematical biology, as they serve as a basic framework for modeling a diversity of biological and ecological mechanisms. In particular,  the study of population dynamics often involves such systems. In a recent paper \cite{BZ_ecology}, we have introduced a new reaction-diffusion system to describe the emergence of territoriality in predatory animals. As a matter of fact, this question, to a large extent, remains puzzling. Specifically, our aim was to understand whether selfish and non organized behaviors of predators are sufficient mechanisms to explain the emergence of territoriality. 

In this model, we only consider basic mechanisms that characterize an environment inhabited by predators and preys. Given a region $\Omega \subset \R^n$ with $n\leq 2$ in practice (the restriction on the dimension is purely for modeling reasons) occupied by $N+1$ groups of animals. The density of the first one, which we denote as $u$, of preys while the remaining $N$ densities, denoted by $w_1, \dots, w_N$, of predators. Each of the densities evolves in time following a typical law of Lotka-Volterra type. The model proposed in \cite{BZ_ecology} is synthesized in the form of the system
\begin{equation}\label{eqn model}
	\begin{cases}
		w_{i,t} - d_i \Delta w_i  = \left(- \omega_i + k_i u -\mu_i w_i- \beta \sum_{j \neq i} a_{ij} w_j\right) w_i \\
		u_{t} - D \Delta u = \left(\lambda - \mu u - \sum_{i=1}^N k_i w_i \right)u
	\end{cases}
\end{equation}
for $(x,t) \in \Omega \times (0,+\infty)$, completed by homogeneous Neumann boundary and smooth initial conditions. The parameters of the model are easily explained: some terms in the equations model internal mechanism in the populations
\begin{itemize}
	\item $D, d_1, \dots, d_N$ are the diffusion coefficients of the different populations, and thus are always considered positive in the following;
	\item $\lambda > 0$ is the effective reproduction rate of the preys, and $\mu \geq 0$ stands for the possible saturability of the environment due to an excess of preys;
	\item $\omega_i$ is the mortality coefficient of the group $i$ of  predators that takes into account the starvation caused by the absence of the prey $u$, and $\mu_i$ takes into account possible saturation phenomena in the predator populations (for instance, an internal --low level-- competition between member of the same density).
\end{itemize}
The other terms are, on the other hand, responsible for the interaction between different densities
\begin{itemize}
	\item $k_1, \dots, k_N$ govern the predation rates. That is, $k_i$ is the success of predator $w_i$ in catching prey $u$ as a factor of the probability of an encounter;
	\item the elements $a_{ij} > 0$ represent how the presence of the density $w_j$ affects the density $w_i$, and the particular choice of the sign suggest that we only consider competing interactions. The parameter $\beta \geq 0$ on the other hand expresses the strength of the interaction: the higher the value of $\beta$, the more aggressive is the behavior of predators between different groups. 
\end{itemize}

Similar models have already been introduced in the ecological and mathematical literature, starting from the seminal paper by Volterra \cite{Volterra} on predator-preys interactions, to the more recent contribution \cite{DancerDu} that started the study of strongly interaction systems of elliptic equations (in that case, modeling populations of competing predators without preys),  the study on the evolution of dispersal by means of systems of many interacting predators \cite{Dockery}, and the papers on the qualitative properties of the solutions to such systems \cite{ContiTerraciniVerzini_AdvMat_2005, CaffKarLin, DaWaZa_Dynamics}. The novelty in our model, that complicates the analysis but allows for more profound results, is the inclusion in the system of the equation for the preys and of the competition between the predators. A more in depth comparison with the results in the scientific literature can be found in \cite{BZ_ecology}, to which we refer the interested reader for more detailed biological considerations.

The main results in this paper regarding model \eqref{eqn model} are summarized as follows. In the following sections we will give more general statements. For sufficiently smooth and positive initial data, the system \eqref{eqn model} admits a unique, bounded and smooth solution, defined for all $t \geq 0$ (see Lemma \ref{lem existence and bounds}). The competition is a driving force in the heterogeneity of the set of solutions, indeed the set of stationary solutions of the system collapse to the set of constant solutions if $\beta$ is small (see Proposition \ref{prp conway}), and is very rich for $\beta$ large (see for instance Theorems \ref{thm bif}, \ref{thm bif an} and \ref{thm bif one}), but under some assumptions the asymptotic behavior $\beta \to +\infty$ can be described accurately (see Lemma \ref{lem existence and bounds} and Proposition \ref{prp conway}). The solutions of the stationary system are regular, independently of the strength of the competition, and they converge to segregated configurations when $\beta \to +\infty$ (see Proposition \ref{prp asymptotic k}) in which territories of different competing groups do not overlap. This allows us to define also limit solutions to \eqref{eqn model} in the case $\beta = +\infty$. Aggressiveness ($\beta \gg 1$) may help to resist an invasion by a foreign group (see Propositions \ref{prop dock new} and \ref{prp dock full}).

On the other hand, the strong competition limits the number of different groups that can survive in a given domain. Indeed we have
\begin{theorem*}
For a given smooth domain $\Omega \subset \R^n$, there exist $\bar N \in \N$ and $\bar \beta > 0$ such that if $\beta > \bar \beta$ and $\mathbf{w}_\beta = (w_{1,\beta}, \dots, w_{N,\beta}, u_{\beta})$ is a solution to \eqref{eqn model} then
\begin{itemize}
	\item either at most $\bar N$ components of $(w_{1,\beta}, \dots, w_{N,\beta})$ are strictly positive and the others are zero;
	\item or the solution is such that
\[
	 \|(w_{1,\beta},\dots,w_{N,\beta})\|_{\C^{0,\alpha}(\Omega)} + \|u_\beta- \lambda/\mu\|_{\C^{2,\alpha}(\Omega)} = o_\beta(1)
\]
for every $\alpha \in (0,1)$.
\end{itemize}
Furthermore, the threshold value $\bar N$ has the following upper-bound:
\[
	\bar N \lesssim \frac{|\Omega|}{4 \pi} \max_{i = 1, \dots, k} \frac{\lambda k_i - \mu \omega_i}{d_i \mu} 
\]
if $n =2$, and similar estimates hold in any dimension.
\end{theorem*}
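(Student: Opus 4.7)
The plan is to argue by contradiction, combining the segregation and regularity provided by Proposition \ref{prp asymptotic k} with a Faber--Krahn type lower bound on the measure of each nontrivial limit support.

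Assume the conclusion fails for the $\bar N$ given by the stated bound: there would then exist a sequence $\beta_n \to +\infty$ and solutions $\mathbf{w}_n = \mathbf{w}_{\beta_n}$ with strictly more than $\bar N$ positive components and
\[
\|(w_{1,n},\dots,w_{N,n})\|_{\C^{0,\alpha}(\Omega)} + \|u_n - \lambda/\mu\|_{\C^{2,\alpha}(\Omega)} \geq \delta_0 > 0.
\]
By Proposition \ref{prp asymptotic k}, up to a subsequence, $w_{i,n} \to w_i^*$ in $\C^{0,\alpha}(\bar\Omega)$ and $u_n \to u^*$ in $\C^{2,\alpha}(\bar\Omega)$, with $w_i^*\,w_j^* \equiv 0$ for $i \neq j$, and each nontrivial $w_i^*$ solving
\[
-d_i \Delta w_i^* = (k_i u^* - \omega_i - \mu_i w_i^*)\,w_i^*
\]
on its positivity set $\Omega_i = \{w_i^* > 0\}$, with zero trace on $\partial\Omega_i \cap \Omega$. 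If all $w_i^*$ were identically zero, then $u^*$ would solve the scalar logistic Neumann problem, whose unique positive solution is $u^* \equiv \lambda/\mu$ (testing against $1 - \lambda/(\mu u^*)$), contradicting $\delta_0 > 0$; hence at least one $\Omega_i$ is nonempty.

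On every such $\Omega_i$ the positive function $w_i^*$ is a subsolution of the linear problem $-d_i \Delta \varphi = (k_i u^* - \omega_i)\varphi$, so the principal mixed Dirichlet--Neumann eigenvalue of that operator on $\Omega_i$ is nonpositive; combined with the a priori bound $u^* \leq \lambda/\mu$, this gives $d_i \lambda_1(\Omega_i) \leq \lambda k_i/\mu - \omega_i$. A Faber--Krahn type inequality in the plane then yields
\[
|\Omega_i| \geq \frac{4\pi d_i \mu}{\lambda k_i - \mu \omega_i},
\]
and summing over the disjoint supports $\Omega_i \subset \Omega$ bounds the number of nontrivial limit components by the claimed quantity.

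The main obstacle is to rule out strictly positive components at finite $\beta_n$ that vanish in the limit, since the strong maximum principle forces each nonzero $w_{i,n}$ to be positive throughout $\Omega$, while only those with nontrivial limit are counted by the Faber--Krahn argument. To close the gap I would examine, for any index with $\|w_{i,n}\|_\infty \to 0$, the normalized sequence $\tilde w_{i,n} = w_{i,n}/\|w_{i,n}\|_\infty$: the segregation structure controls the penalty $\beta_n \sum_{j\neq i} a_{ij} w_{j,n}$ asymptotically on the complement of the other nontrivial limit supports, so a subsequence of $\tilde w_{i,n}$ would converge to a positive Neumann eigenfunction of $-d_i \Delta - (\lambda k_i/\mu - \omega_i)$ on $\Omega$. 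For generic parameters no such eigenfunction exists, forcing $w_{i,n} \equiv 0$ for $n$ large and yielding the desired contradiction.
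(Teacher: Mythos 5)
Your overall structure is sound -- argue by contradiction, pass to the singular limit via Proposition \ref{prp asymptotic k}, read a bound on the number of nontrivial limit supports from a spectral estimate, and then rule out components that are strictly positive at finite $\beta$ but vanish in the limit. That is also the paper's scheme. However, two of your three key steps have genuine gaps.

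\textbf{(1) The Faber--Krahn step.} You invoke a Faber--Krahn inequality to derive $|\Omega_i| \geq 4\pi d_i\mu/(\lambda k_i - \mu\omega_i)$ from $d_i\,\gamma_1(\Omega_i) \leq \lambda k_i/\mu - \omega_i$. But $\gamma_1(\Omega_i)$ here is a \emph{mixed} Dirichlet--Neumann eigenvalue (Dirichlet on $\partial\Omega_i\cap\Omega$, Neumann on $\partial\Omega_i\cap\partial\Omega$), and the classical Faber--Krahn inequality $\lambda_1(D)\,|D|\geq \pi j_{0,1}^2$ applies to the pure Dirichlet eigenvalue. For a set $\Omega_i$ that touches $\partial\Omega$ with Neumann conditions the mixed eigenvalue can be strictly below the Dirichlet eigenvalue of a ball of the same measure (a reflection heuristic across $\partial\Omega$ suggests a loss of roughly a factor $2$), so the measure lower bound with the stated constant does not follow. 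Your constant $4\pi$ is in fact the Weyl constant, not the Faber--Krahn constant $\pi j_{0,1}^2\approx 18.2$. The paper avoids this entirely: it does not bound $|\Omega_i|$ individually but applies the optimal-partition result of Helffer--Hoffmann-Ostenhof--Terracini (Theorem \ref{thm eig bound hhot}), namely that for any $N$-partition of $\Omega$ one has $\max_i \gamma_1(\Omega_i) \geq \gamma_N(\Omega)$ for the \emph{Neumann} eigenvalues of the full $\Omega$, which is exact, built for the mixed problem, and yields the explicit asymptotic constant via Weyl's law. This is the correct substitute for the Faber--Krahn heuristic here.

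\textbf{(2) The vanishing-components step.} You correctly identify the obstruction (components strictly positive at finite $\beta_n$ that vanish in the limit) but your proposed resolution does not work. You claim that the normalized sequence $\tilde w_{i,n} = w_{i,n}/\|w_{i,n}\|_\infty$ converges to a positive Neumann eigenfunction of $-d_i\Delta - (\lambda k_i/\mu - \omega_i)$ on the whole of $\Omega$, which ``generically'' does not exist. First, the genericity assumption is not allowed: the theorem is asserted for all parameter values. Second, and more importantly, the claimed limit equation is wrong: since $\beta_n\to\infty$ and $w_{j,n}\to w_j^*\not\equiv 0$ for some $j$, the penalty $\beta_n\sum_{j\neq i}a_{ij}w_{j,n}$ tends to $+\infty$ on the supports of the nontrivial limit components, so $\tilde w_{i,n}$ is forced to zero there; the limit certainly cannot be a positive Neumann eigenfunction over all of $\Omega$. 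The paper's proof of the Claim inside Theorem \ref{thm max packs} instead shows via $W^{2,p}$ estimates that $\tilde w_{i,n}$ retains positive $L^2$ mass, then localizes the $L^2$ integral to a set $\Omega_{\bar\eps,n^*}$ strictly inside the positivity region of $\sum_{j\le h} w_j^*$, where the penalty is bounded below by $\beta_n\bar\eps$, and gets a contradiction as $\beta_n\to\infty$. No genericity is needed, and no global eigenfunction appears.

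Your first limiting argument (if all $w_i^*\equiv 0$ then $u^*\equiv \lambda/\mu$, contradicting the assumed lower bound) is correct and is in fact what the paper uses at the very end of the proof of Theorem \ref{thm max packs} to pin down the second alternative. The remaining steps need to be replaced as above.
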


In the theorem $A \lesssim B$ stands for $A \leq B + o(B)$ where $o(B)/B \to 0$ as $B \to +\infty$. This theorem states that, when $\beta$ is sufficiently large, solutions to \eqref{eqn model} are either close to constant (and small) solutions, or they have at most $\bar N$ (+1) non trivial components. This result has important repercussions on the biological interpretations of the model, as it imposes an upper-bound on the total number of hostile groups of predators that can survive in a given environment. Moreover, the upper-bound itself has important ecological consequences: We have explored them in \cite{BZ_ecology}. 

Finally, under some assumptions on the coefficients, there exist a number of densities of predators $N \in \N_0$ and a solution $(w_1, \dots, w_N, u)$ of \eqref{eqn model} that maximize the total population of predators. We show furthermore that, in many cases, this optimal configuration is given by two or more densities of predators that have very aggressive behavior between each other, rather than by a simple homogeneous group that displays no aggressiveness between its components.
 
\begin{theorem*}[see Theorems \ref{thm maxim} and \ref{2 is better}]
For any given smooth domain $\Omega$, there exist a number $\bar N \in \N$ and a solution $(w_1, \dots, w_N, u)$ of \eqref{eqn model}  at most $\bar N +1$ non trivial components (possibly with $\beta = +\infty$) that maximizes the functional
\[
	P(w_1, \dots, w_N, u) = \int_{\Omega} \sum_{i=1}^N w_i
\]
among the set of all non negative solutions of \eqref{eqn model}. Moreover, if $\Omega$ is a rectangular domain (in any dimension) and $\mu$ is sufficiently small, the maximum is attained by a solution with two or more densities of predators, that is, $\bar N \geq 2$ and $\beta > 0$.
\end{theorem*}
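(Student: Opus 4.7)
The first Theorem recalled in the introduction asserts that for $\beta$ large, any solution of \eqref{eqn model} has at most $\bar N$ nontrivial predator components, or is $o_\beta(1)$-close to $(0, \ldots, 0, \lambda/\mu)$; the latter has $P = 0$ and cannot be a maximizer (any nontrivial constant already yields $P > 0$), so a maximizing sequence may be chosen with uniformly bounded $N$. The $L^\infty$ bounds of Lemma \ref{lem existence and bounds} combined with elliptic regularity give $\C^{2,\alpha}$-compactness whenever $\beta$ stays bounded, whereas Proposition \ref{prp asymptotic k} produces $\C^{0,\alpha}$-compactness and a segregated limit (interpreted as a solution with $\beta = +\infty$) when $\beta \to \infty$. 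Since $P$ is continuous along the extracted subsequence in both cases, the supremum is attained.

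\textbf{Strictly better two-group configurations on a rectangle.} For $\Omega$ rectangular and $\mu$ small, assume for simplicity that all predator parameters are equal to common values $k, \omega, \mu_1, d$. By Proposition \ref{prp conway}, every one-group stationary solution on the convex rectangle is close to the explicit constant one, so $P_1 = |\Omega|\lambda/k + O(\mu)$; the same proposition rules out improvements at $\beta = 0$ with $N \geq 2$, because the constant $N$-group solution then yields the same leading value $|\Omega|\lambda/k$ by direct computation. As competitor I would take, via Proposition \ref{prp asymptotic k}, the reflection-symmetric $\beta = +\infty$ segregated solution on a splitting $\Omega = \Omega_1 \cup \Omega_2$ along a median hyperplane, with $w_i$ supported on $\Omega_i$ and vanishing on the interface $\Gamma$. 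Setting $\sigma = w_1 + w_2$ and integrating the prey equation over $\Omega$ yields
\[
\int_\Omega \sigma u = \frac{\lambda}{k}\int_\Omega u - \frac{\mu}{k}\int_\Omega u^2,
\]
so that $P_2 = |\Omega|\bar\sigma > P_1$ reduces, for $\mu$ small, to the strict anti-correlation inequality $\mathrm{Cov}(\sigma, u) < 0$.

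\textbf{Main obstacle.} The strict anti-correlation is the heart of the argument. Heuristically it holds because $\sigma$ vanishes on $\Gamma$ while $u$ is pushed upward there by the absence of predation, and conversely in the bulk of each $\Omega_i$ the predator saturates near $\lambda/k$ and depletes the prey. A genuine subtlety is that at $\mu = 0$ the prey integral identity above only shows that $\mathrm{Cov}(\sigma, u)$ and $\bar\sigma - \lambda/k$ have opposite signs, so an independent estimate is indispensable. I would extract it by integrating the predator equations over each $\Omega_i$, where the strictly positive interface flux $F_i = -d\int_\Gamma \partial_\nu w_i > 0$ (forced by the Dirichlet condition on $\Gamma$) appears, and by testing the prey equation against $u - \bar u$ to exploit the Poincar\'e inequality. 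Using the reflection symmetry to reduce the boundary-layer analysis near $\Gamma$ to a one-dimensional ODE along the transverse axis, one should be able to compute the first nontrivial correction to $\bar\sigma - \lambda/k$ explicitly and verify its positivity; a continuity-in-$\mu$ argument then extends the conclusion to small $\mu > 0$, yielding $\bar N \geq 2$ with $\beta > 0$.
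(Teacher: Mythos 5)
Your treatment of the existence part is essentially sound (the compactness in the two regimes of bounded $\beta$ versus $\beta\to\infty$ combined with the bound $\bar N$ on the number of nontrivial components is indeed the paper's route), though the one-predator stationary solution with a single prey is \emph{exactly} constant by Mimura's theorem, not merely approximately so via Proposition~\ref{prp conway}, and the exclusion of spatially heterogeneous solutions at $\beta=0$ comes from Lemma~\ref{lem trivial branch of sol}, not from that proposition.

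The genuine gap is in the second part. You correctly derive the direct integral identity $\int_\Omega \sigma u = \frac{\lambda}{k}\int_\Omega u - \frac{\mu}{k}\int_\Omega u^2$ and observe that it only relates $\bar\sigma - \lambda/k$ to $\mathrm{Cov}(\sigma,u)$ without fixing either sign; you then propose a boundary-layer analysis near the interface to break the tie. That analysis is not carried out, and it is precisely the hard part. The paper avoids it entirely with a different manipulation: instead of integrating the prey equation, one \emph{divides it by $u$} (which is strictly positive by the maximum principle) and then integrates, obtaining
\[
\int_\Omega \sum_i w_i \;=\; \frac{\lambda}{k}\,|\Omega| \;-\; \frac{\mu}{k}\int_\Omega u \;+\; \frac{1}{k}\int_\Omega |\nabla \log u|^2,
\]
where the last term comes from $\int\frac{\Delta u}{u}=\int\frac{|\nabla u|^2}{u^2}$ after integration by parts with the Neumann boundary condition. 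At $\mu=0$ this gives $\int\sum w_i > \frac{\lambda}{k}|\Omega|$ \emph{immediately} for any solution with $u$ non-constant (Lemma~\ref{lem: first res opt 2 or more}), with no covariance or boundary-layer argument needed. For $\mu>0$ small, the paper then proves by separate compactness arguments (in the 1D segregated system on a half-interval, Proposition~\ref{prop stima int}) that $\mu\int u\to 0$ and that $\int|\nabla\log u|^2$ stays bounded away from zero, which is what your ``continuity in $\mu$'' step actually requires and is not automatic: the a~priori bound $u\le\lambda/\mu$ blows up as $\mu\to 0$, so neither estimate is free. Without the $\log u$ trick you do not have a manifestly positive quantity to control, and the proposal as written does not establish the strict inequality.
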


An interesting open problem remains for the second conclusion of the theorem, concerning the case when $\bar N \geq 2$. Indeed, we believe that the result holds true for rather general domains, and numerical simulations sustain our claim, but the proof is so far elusive. 

A consequence of the theorem is that competition between predators can be beneficial not only for the preys, but  for the total population of predators as well. 

\subsection*{Structure of the paper} The paper is organized as follows: in Section \ref{sec long time} we consider some basic properties of the system, such as existence and regularity of solutions, together with some asymptotic properties of the system, focusing on the stability properties of specific solutions. In Section \ref{sec bif}, thanks to a bifurcation analysis, we show that the set of stationary solutions is very rich. In Section \ref{sec unif est}, we present some uniform estimates that we later use to give a precise description of the solutions for large competition. In Section \ref{sec dim one}, we show a more precise description of the bifurcation diagram in dimension one. In Section \ref{sec opt rep}, we investigate some properties of the system with a large number of components, and finally, in Section \ref{sec emergence}, we show configurations in which the maximizers of the integral of the densities $w_i$ are spatially heterogeneous.\hfill$\blacksquare$

The interested reader will find a companion paper \cite{BZ_ecology} to this one, where we investigate the biological and ecological interpretations of the mathematical results herein contained. The present paper is the first of a series in which we investigate properties of the system \eqref{eqn model}. In a second part \cite{BerestyckiZilio_NN} we give deeper a priori estimates of the solution to the elliptic counterpart of system \eqref{eqn model}. We exploit these properties in \cite{BerestyckiZilio_RR} to give a more precise description of the set of solutions. Finally, in \cite{BerestyckiZilio_FF} we prove results regarding the parabolic version of the model.

\section{Basic properties of the solutions}\label{sec long time}
In this section we investigate some basic properties of the system. First, we establish existence and uniqueness results for the solutions. Then we analyze the long time behavior of the set of solutions. We also consider stability properties of a special class of solutions, namely those with only one predator and one prey, that is where all the $w_i$ are zero but one. 

We recall that the system reads:
\begin{subequations}\label{eqn model long time}
\begin{equation}\label{eqn model eqn}
	\begin{cases}
		w_{i,t} - d_i \Delta w_i  = \left(- \omega_i + k_i u -\mu_i w_i- \beta \sum_{j \neq i} a_{ij} w_j\right) w_i \\
		u_{t} - D \Delta u = \left(\lambda - \mu u - \sum_{i=1}^N k_i w_i \right)u
	\end{cases}
\end{equation}
in a domain $Q := \Omega \times (0,\infty)$, with $\Omega \Subset \R^n$ open, smooth, bounded and connected. It is completed by boundary and smooth initial conditions
\begin{equation}\label{boundary cond}
	\begin{cases}
		\partial_\nu w_i = \partial_\nu u = 0 &\text{ on $\partial \Omega \times (0,+\infty)$}\\
		w_i(x,0) = w_i^0(x) \geq 0, u(x,0)=u^0(x) \geq 0 &\text{ on $\Omega \times \{0\},$}
	\end{cases}
\end{equation}
\end{subequations}
where $\nu$ denotes the unit outward normal vector field on $\partial \Omega$. We start with the following existence result
\begin{lemma}\label{lem existence and bounds}
Let $(w_1^0, \dots, w_N^0, u^0) \in \C^{0,\alpha}(\Omega)$ be a non-negative initial condition for the system \eqref{eqn model long time}. There exists a unique solution $(w_1, \dots, w_N, u) \in \C^{2,\alpha}_x\C^{1,\alpha/2}_t(Q)$ for all $\alpha \in (0,1)$ which is defined globally for all $t > 0$. Moreover the solution is  bounded in $L^\infty(Q)$ and for any $\eps>0$ there exists $T_\eps>0$ such that
\[
	\begin{split}
		\sup_{(x,t) \in \Omega \times [T_\eps,+\infty)} w_i(x,t) &\leq \frac{\lambda k_i - \mu \omega_i}{\mu \mu_i}  + \eps\\
		\sup_{(x,t) \in \Omega \times [T_\eps,+\infty)} u(x,t) &\leq \frac{\lambda}{\mu} + \eps.
	\end{split}
\]
Consequently, if there exists and index $i \in \{1,\dots,N\}$ such that $\lambda k_i \leq \mu \omega_i$, then
\[
	\lim_{t\to +\infty} \sup_{x \in \Omega} w_{i}(x,t) = 0.
\]
\end{lemma}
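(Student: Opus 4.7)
The plan is to split the proof into four steps: (i) local well-posedness in $\C^{2,\alpha}_x\C^{1,\alpha/2}_t$, (ii) nonnegativity of every component, (iii) a global-in-time $L^\infty$ bound, and (iv) the sharp asymptotic bounds, from which the extinction claim will follow.

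For (i), since the diffusion part of \eqref{eqn model eqn} is linear and decoupled and the reaction is polynomial (hence locally Lipschitz), I would invoke standard parabolic Schauder theory for semilinear systems with Neumann boundary data --- in the spirit of Ladyzhenskaya--Solonnikov--Ural'tseva or Amann --- to obtain a unique maximal classical solution on some interval $[0,T^*)$ with the stated H\"older regularity, together with a blow-up alternative in $L^\infty$. For (ii), the $i$-th equation, viewed with the other unknowns frozen as bounded functions, reads $w_{i,t} - d_i \Delta w_i = c_i(x,t)\, w_i$ with $c_i \in L^\infty$, for which $0$ is a subsolution. A Stampacchia-type test against $w_i^- = \max(-w_i,0)$ combined with Gronwall's inequality then rules out a negative part on every time slab on which the coefficients are bounded; the analogous computation for $u$ completes (ii).

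For (iii) and (iv), I would compare with scalar logistic ODEs. Using $w_i,k_i \geq 0$ one gets $u_t - D\Delta u \leq (\lambda - \mu u)u$, so the nonlinear parabolic comparison principle against the spatially constant supersolution $U(t)$ solving $\dot U = (\lambda - \mu U) U$, $U(0) = \|u^0\|_{L^\infty}$, yields $u(x,t) \leq U(t)$. Since $U(t) \to \lambda/\mu$, this simultaneously gives the global uniform bound on $u$ (hence, combined with (ii), rules out $T^* < +\infty$) and the asymptotic estimate. For any $\eps > 0$ pick $T_\eps$ such that $u \leq \lambda/\mu + \eps$ on $\Omega \times [T_\eps, +\infty)$; discarding the nonpositive term $-\beta \sum_{j \neq i} a_{ij} w_i w_j$ in the $w_i$-equation gives
\[
	w_{i,t} - d_i \Delta w_i \leq \left( \frac{\lambda k_i - \mu \omega_i}{\mu} + k_i \eps - \mu_i w_i \right) w_i
\]
for $t \geq T_\eps$. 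A second scalar comparison against the corresponding logistic ODE produces $\limsup_{t \to \infty} \sup_\Omega w_i \leq (\lambda k_i - \mu \omega_i)/(\mu \mu_i) + O(\eps)$, and letting $\eps \to 0$ closes (iv). In the subcritical case $\lambda k_i \leq \mu \omega_i$, the linear coefficient of the comparison ODE is at most $k_i \eps$, so its long-time upper bound $k_i \eps / \mu_i$ is arbitrarily small; this forces $\sup_\Omega w_i \to 0$ and yields the extinction statement.

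The arguments are a sequence of textbook scalar parabolic comparison results, so no analytic obstacle is expected. The only point requiring care is the order of the steps: one must close (i)--(ii) and the $u$-part of (iii) on the local interval $[0,T^*)$ first, since the uniform bound for $u$ is what allows us to conclude $T^* = +\infty$, and only afterwards can one invoke the second comparison to obtain (iv).
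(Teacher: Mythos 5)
Your overall strategy matches the paper's: compare $u$ with the scalar logistic ODE $\dot U = (\lambda - \mu U)U$ using $w_i, k_i \geq 0$, then feed the resulting bound on $u$ into a second logistic comparison for each $w_i$. The Stampacchia route to nonnegativity is a reasonable alternative to the paper's remark that the trivial solution is a subsolution of each scalar equation; both work. However, there is a genuine logical gap at the step where you deduce $T^* = +\infty$. You claim that the global bound $u \leq U(t)$, together with nonnegativity, ``rules out $T^* < +\infty$'' --- but nonnegativity only bounds the $w_i$ from below. The blow-up alternative requires an $L^\infty$ bound from above on \emph{all} components of the solution on $[0,T^*)$, and at this point in your argument the $w_i$ are still unbounded. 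Moreover your second comparison, as written, only applies on $\Omega \times [T_\eps, T^*)$, but you cannot restrict to $t \geq T_\eps$ before you know $T^* > T_\eps$; the logic is circular.

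The fix is exactly what the paper does: before taking the limit $t \to \infty$, first run the $w_i$-comparison on the \emph{entire} interval $[0,T^*)$ against the nonautonomous ODE $\dot W_i = (-\omega_i + k_i U(t) - \mu_i W_i) W_i$ with $W_i(0) = \max\{(\lambda k_i - \mu\omega_i)/(\mu\mu_i), \sup_\Omega w_i^0\}$, using $u \leq U(t)$ and the nonpositivity of $-\beta\sum_{j \neq i} a_{ij} w_i w_j$. Since $U$ is uniformly bounded, $W_i$ is uniformly bounded on $[0,\infty)$, which yields a uniform bound on $w_i$ on $[0,T^*)$ and \emph{then} lets you conclude $T^* = \infty$. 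Once global existence is secured, your asymptotic argument (pick $T_\eps$ so that $U(t) \leq \lambda/\mu + \eps$, compare with the logistic ODE with constant coefficient $(\lambda k_i - \mu\omega_i)/\mu + k_i\eps$, send $\eps \to 0$) is correct, and your handling of the subcritical case $\lambda k_i \leq \mu\omega_i$ follows as a corollary, exactly as in the paper.
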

In view of the last property, we shall also assume in the following that
\begin{itemize}
	\item[(H)] \label{assumpt} the relation $\lambda k_i > \mu \omega_i$ holds for all $i = 1,\dots,N$.
\end{itemize}

\begin{proof}
The existence of solution for $t \in [0, t_0]$ with $t_0 > 0$ small follows from standard arguments, since the semi-linear terms of the system are locally Lipschitz continuous: in order to extend the existence result for all time $t>0$, it suffices to show an a priori $L^\infty$ uniform bound on the solutions.

First of all, we can observe that each single equation of the system \eqref{eqn model long time} is satisfied by the trivial solution ($u=0$ and $w_i = 0$ for some $i$). Consequently, the comparison principle applied to each equation implies that the solutions, when defined, are strictly positive for positive $t$. Using this information, we focus our attention on the equation satisfied by the density $u$, that is
\begin{equation}\label{eqn u from sys}
	\begin{cases}
		u_{t} - D \Delta u = \left(\lambda - \mu u -  \sum_{i=1}^N k_i w_i\right)u \\
		u(x,0) = u^0(x).
	\end{cases}
\end{equation}
Let $U \in \C^1(\R^+)$ be the solution of the initial value problem
\[
	\begin{cases}
		\dot U = \lambda U - \mu U^2 &\text{for $t >0$}\\
		U(0) = \max \{ \lambda/\mu , \sup_{x\in \Omega} u^0(x) \} > 0.
	\end{cases}
\]
The family of solution $U$ is decreasing in $t >0$ and $U(t) \to \lambda/\mu$ as $t \to +\infty$: as a result, for any $\eps > 0$ there exists $T_\eps \geq 0$ finite such that $U(t) \leq \lambda/\mu + \eps$ for any $t \geq T_\eps$. Clearly, since each $w_i$ is non-negative, $u(x,t) \leq U(t)$ for all $x \in  \Omega$. Therefore, $u(x,t)$ is then bounded uniformly. Taking into account this information, we see that each $w_i$ satisfies 
\begin{equation}\label{eqn wi from sys}
	\begin{cases}
		w_{i,t} - d_i \Delta w_i \leq \left(- \omega_i + k_i U - \mu_i w_i - \beta \sum_{j \neq i} a_{ij} w_j \right) w_i \\	
		w_i(x,0) = w_i^0(x).
	\end{cases}
\end{equation}
Using a similar reasoning as before, we can introduce the auxiliary function $W_{i} \in \C^1(\R^+)$ solution to the initial value problem
\[
	\begin{cases}
		\dot W_i = (-\omega_i + k_i U - \mu_i W_i) W_i &\text{for $t >0$}\\
		W_i(0) =  \max \left\{ \frac{\lambda k_i - \mu \omega_i}{\mu \mu_i}, \sup_{x\in \Omega} w_i^0(x) \right\} > 0.
	\end{cases}
\]
Clearly $W_i$ is uniformly bounded in $t$ and moreover, $W_i(t) \to (\lambda k_i - \mu \omega_i)/ (\mu \mu_i)$ as $t \to +\infty$. Hence, we deduce that for any $\eps > 0$ there exists $T_\eps \geq 0$ finite such that $W_i(t) \leq  (\lambda k_i - \mu \omega_i)/ (\mu \mu_i) + \eps$ for any $t \geq T_\eps$.  Again, since each $w_i$ is non negative and $u \leq U$, we see that $W_i$ is a super-solution for \eqref{eqn wi from sys} and thus $w_i$ is bounded uniformly.

The previous uniform upper bounds are enough to ensure that the solution can be extended for all time $t > 0$ and also yield the asymptotic estimates. 
\end{proof}

Before going further we recall a result in \cite{ConwayHoffSmoller} about the asymptotic behavior in time of solutions to systems of reaction diffusion equations. We let $L$ be the Lipschitz constant of the semi-linear term in \eqref{eqn model} on the invariant region of Lemma \ref{lem existence and bounds}. That is, letting
\[
	F(s_1, \dots, s_N, S) = 
	\left( \begin{array}{c} \left(- \omega_i + k_i S -\mu_i s_i- \beta \sum_{j \neq i} a_{ij} s_j\right) s_i \\
		\left(\lambda - \mu S - \sum_{i=1}^N k_i s_i \right)S
	\end{array}\right)
\]
we define
\[
	L = \sup\left\{ |\nabla F(s_1, \dots, s_N, S)| ; 0 < s_i < \frac{\lambda k_i - \mu \omega_i}{\mu \mu_i}, \; 0 < S <\frac{\lambda}{\mu} \right\}.
\]
We observe that, thanks to the assumptions, $L$ is finite and positive. We also let
\[
	d = \min\{ d_1, \dots, d_N, D\}
\]
and define $\gamma_1$ to be the first non trivial (that is, positive) eigenvalue of the Laplace operator $-\Delta$ in $\Omega$ with homogeneous boundary conditions. Finally, for any solution $(w_1, \dots, w_N, u)$ of \eqref{eqn model}, we let
\[
	\bar w_i(t) = \frac{1}{|\Omega|} \int_\Omega w_i(x,t) dx, \quad \bar u(t) =  \frac{1}{|\Omega|} \int_\Omega u(x,t) dx.
\]
Applying \cite[Theorem 3.1]{ConwayHoffSmoller} to our system \eqref{eqn model} we have the following result on the asymptotic behavior of the solutions for large time.
\begin{proposition}\label{prp conway}
Let 
\[
	\sigma = d \gamma_1 - L.
\]
If $\sigma > 0$, then for any non negative initial condition $(w_1^0, \dots, w_N^0, u^0) \in \C^{0,\alpha}(\Omega)$, the corresponding unique solution of the system \eqref{eqn model long time} converges exponential towards spatially homogeneous solutions, that is, for any $0 < \sigma' < \sigma$ there exists a constant $C > 0$ such that
\begin{gather*}
	\sum_{i=1}^N \| \nabla w_i \|_{L^2(\Omega)}  + \| \nabla u \|_{L^2(\Omega)} \leq C e^{-\sigma' t}\\
	\sum_{i=1}^N  \left\| w_i(\cdot,t) -  \bar w_i(t) \right\|_{L^\infty(\Omega)} + \left\| u(\cdot,t) -  \bar u(t) \right\|_{L^\infty(\Omega)}  \leq C e^{-\sigma' t /n}.
\end{gather*}
Moreover, the vector $(\bar w_1, \dots, \bar w_M, \bar u)$ is a solution of a  the system of ordinary differential equations of the form
\[
	\begin{cases}
		\bar w_{i}' = \left(- \omega_i + k_i \bar u -\mu_i \bar w_i- \beta \sum_{j \neq i} a_{ij} \bar w_j\right) \bar w_i + g_i(t)\\
		\bar u_{t}' = \left(\lambda - \mu \bar u - \sum_{i=1}^N k_i \bar w_i \right)\bar u + g(t)
	\end{cases}
\]
with
\[
		\bar w_i(0) = |\Omega|^{-1} \int_\Omega w_i(x)^0 dx, \quad \bar u(0) =  |\Omega|^{-1} \int_\Omega u(x)^0 dx.
\]
and
\[
	\sum_{i=1}^N |g_i(t)| + |g(t)| \leq C e^{-\sigma' t}.
\]
\end{proposition}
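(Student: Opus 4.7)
The plan is to apply \cite[Theorem 3.1]{ConwayHoffSmoller} to the system \eqref{eqn model long time} and then to identify the ODE satisfied by the spatial averages. The main content of the argument is verifying the hypotheses of the cited theorem. By Lemma \ref{lem existence and bounds}, every solution eventually enters, and cannot leave, the rectangle
\[
    R := \prod_{i=1}^{N} \Bigl[0, \tfrac{\lambda k_i - \mu \omega_i}{\mu \mu_i} + \eps\Bigr] \times \Bigl[0, \tfrac{\lambda}{\mu} + \eps\Bigr].
\]
On $R$ the polynomial nonlinearity $F$ is smooth with gradient bounded by $L$; the diffusion matrix is diagonal with smallest entry $d$; and under the homogeneous Neumann boundary condition $\gamma_1$ is exactly the Poincar\'e constant for zero-mean functions on $\Omega$. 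The hypothesis $\sigma = d \gamma_1 - L > 0$ is then precisely the spectral gap condition of the cited theorem.

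To make the underlying mechanism transparent, I would sketch the corresponding energy estimate. Writing $\mathbf{v} = (w_1, \dots, w_N, u)$ and $\tilde{\mathbf{v}} = \mathbf{v} - \bar{\mathbf{v}}(t)$, testing each equation against the corresponding component of $\tilde{\mathbf{v}}$, integrating by parts, and summing yields
\[
    \tfrac12\tfrac{d}{dt}\|\tilde{\mathbf{v}}\|_{L^2}^2 + d\,\|\nabla \mathbf{v}\|_{L^2}^2 \leq \int_\Omega \bigl[F(\mathbf{v}) - F(\bar{\mathbf{v}})\bigr] \cdot \tilde{\mathbf{v}}\, dx \leq L \|\tilde{\mathbf{v}}\|_{L^2}^2,
\]
where the central rewriting uses that $F(\bar{\mathbf{v}}(t))$ is $x$-independent while $\tilde{\mathbf{v}}$ has zero spatial mean, and the final step is the Lipschitz bound on $R$. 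Combined with the Poincar\'e inequality $\|\tilde{\mathbf{v}}\|_{L^2}^2 \leq \gamma_1^{-1} \|\nabla \mathbf{v}\|_{L^2}^2$, Gr\"onwall's lemma gives $\|\tilde{\mathbf{v}}\|_{L^2} \leq C e^{-\sigma t}$. Parabolic regularity on the invariant region provides uniform $H^k$ bounds on $\tilde{\mathbf{v}}$ for every $k$; interpolating these against the $L^2$ decay via Gagliardo--Nirenberg delivers both the $L^2$ decay of $\|\nabla \mathbf{v}\|$ at any rate $\sigma' < \sigma$ and the $L^\infty$ decay at rate $\sigma'/n$, the $1/n$ factor being the standard dimension-dependent cost of passing from $L^2$ to $L^\infty$.

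The ODE system for the means is obtained by integrating \eqref{eqn model eqn} over $\Omega$: the divergence theorem and the Neumann condition eliminate the Laplacian terms, leaving
\[
    \bar w_i'(t) = \frac{1}{|\Omega|}\int_\Omega F_i(\mathbf{v}(x,t))\,dx, \qquad \bar u'(t) = \frac{1}{|\Omega|}\int_\Omega F_{N+1}(\mathbf{v}(x,t))\,dx.
\]
Writing $F_i(\mathbf{v}) = F_i(\bar{\mathbf{v}}) + [F_i(\mathbf{v}) - F_i(\bar{\mathbf{v}})]$ and averaging isolates the autonomous right-hand side of the announced ODE plus the remainders $g_i(t)$ and $g(t)$, which the Lipschitz bound on $R$ and Cauchy--Schwarz control by $C \|\tilde{\mathbf{v}}\|_{L^2} \leq C e^{-\sigma' t}$.

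The main technical burden is the $L^\infty$ decay at rate $\sigma'/n$: it requires pairing the exponential $L^2$ decay with dimension-independent higher-regularity bounds and then interpolating, and it is here that both the $1/n$ factor and the arbitrarily small loss $\sigma' < \sigma$ in the exponent originate.
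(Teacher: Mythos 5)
Your proof follows essentially the same approach as the paper: verify that solutions eventually enter the invariant rectangle from Lemma~\ref{lem existence and bounds}, observe that on an $\eps$-enlarged rectangle the spectral gap $\sigma' = d\gamma_1 - L_\eps$ remains positive, and then cite \cite[Theorem 3.1]{ConwayHoffSmoller}. The paper leaves it at that; you additionally unpack the energy estimate, interpolation, and averaging that underlie the cited theorem, which is correct but not strictly needed for the argument.
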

\begin{proof}
The proof is a straightforward application of \cite[Theorem 3.1]{ConwayHoffSmoller}. We only observe that by Lemma \ref{lem existence and bounds} we know that from any positive initial data and any $\eps > 0$ there exists $T_\eps > 0$ such that the corresponding unique solution is contained in the region 
\[
	\left\{ 0 < w_i(x,t) < \frac{\lambda k_i - \mu \omega_i}{\mu \mu_i} + \eps, \; 0 < u(x,t) <\frac{\lambda}{\mu} + \eps, \; \forall x \in \Omega \right\}
\]
for all $t \geq T_\eps$. Now, if $\sigma > 0$, by regularity of $F$ for any $\eps > 0$ sufficiently small
\[
	\sigma' = d \gamma_1 -  \sup\left\{ |\nabla F(s_1, \dots, s_N, S)| ; 0 < s_i < \frac{\lambda k_i - \mu \omega_i}{\mu \mu_i} + \eps, \; 0 < S <\frac{\lambda}{\mu} + \eps \right\} > 0
\]
and we can apply \cite[Theorem 3.1]{ConwayHoffSmoller} to obtain the stated exponential estimates.
\end{proof}

The important consequence of the previous proposition is that the behavior of the solutions, in the regime $\sigma > 0$ is well described by the corresponding system of ordinary differential equations. It also gives us a complete characterization of the set of stationary solutions of \eqref{eqn model long time}, which is then given by the (spatially constant) solutions of $F(w_1, \dots, w_N, u) = 0$. For instance, by studying the stability of the stationary and homogeneous solutions (see Proposition \ref{prop dock new} and Lemma \ref{lem matrix A} below) we will see that in this case, when $\beta > 0$ the only stable stationary solutions are those that have $u > 0$ and only one component of $(w_1, \dots, w_N)$ non trivial (and positive). 

Finally, we observe that the condition $\sigma > 0$ can be violated in three different ways:
\begin{enumerate}[label=(\roman*)]
	\item lowering the diffusion coefficients,
	\item enlarging the domain or
	\item augmenting the Lipschitz constant $L$. 
\end{enumerate}
This last possibility, which is the one that we mainly explore later, can be result for instance form taking $\mu$ small and $\beta$ large enough.

We now start investigating the equilibria of the system, in particular we want to analyze what is the impact of the competition parameter on the possible heterogeneity of the solutions of the system.

We first recall the well known result by Dockery~et~al.\ \cite{Dockery} on a related simpler model
\begin{equation}\label{eqn dock}
	\begin{cases}
		w_{i,t} - d_i \Delta w_i = \left(a(x) -  \sum_{j =1}^N w_j\right) w_i  &\text{in $\Omega \times (0,+\infty)$}\\
		\partial_\nu w_i = 0 &\text{on $\partial\Omega \times (0,+\infty)$}.
	\end{cases}
\end{equation}
This system describes $N$ populations that share the same spatially distributed resource $a$. These populations do not compete actively against each other, but they do suffer from overpopulation, which is model by the logistic term in the equations. Here $a$ is a smooth non constant function such that the principal eigenvalue of each of the elliptic operators
\[
	\begin{cases}
		-d_i \Delta w = a w + \lambda w &\text{in $\Omega$}\\
		\partial_\nu w = 0 &\text{on $\partial \Omega$,}
	\end{cases}
\]
denoted by $\lambda(d_i,a)$, is strictly negative (implying, in particular, the instability of the zero solution). Exploiting the particular symmetric structure of the interaction/competition term, Dockery~et~al.\ were able to show that if $a$ is not constant, the only asymptotically stable equilibrium of the system is the stationary solution that has all the components $w_i$ zero except for the one with the smallest diffusion coefficient $d_i$. Moreover, the same result holds if we introduce a small mutation term in the system, which in terms imply also an evolutionary advantage for small diffusion rates. The classic interpretation of this result is that, since the densities $w_i$ in \eqref{eqn dock} are equivalent if not for the diffusion rates, the density which can concentrate more on favorable zones (maxima of $a$) will benefit more than the others and will end up eliminating them.

In what follows, we shall show that this is not the case for the model we are considering, and in particular we prove that for $\beta$ sufficiently large, all the solutions that have only one nontrivial density of predators are asymptotically stable. 

\begin{remark} In order to justify the link between the model \eqref{eqn dock} and our model \eqref{eqn model long time}, let us consider the limit case of \eqref{eqn model long time} in which the density $u$ has a very fast dynamic with respect to the other components, that is, let us assume that for each $t>0$, the density $u$ reaches instantaneously its non-trivial inviscid equilibrium state,
\[
	\lambda u - \mu u^2 - u \sum_{i=1}^N k_i w_i = 0 \implies u = \frac{1}{\mu}\left(\lambda - \sum_{i =1}^N k_i w_i\right).
\]
Substituting the previous identity in the equations satisfied by $w_i$ we obtain
\[
	w_{i,t}-d_i \Delta w_i = \left( \frac{k_i \lambda}{\mu} - \omega_i  - \frac{k_i}{\mu}w_i - \sum_{j\neq i} \left(\beta a_{ij} + \frac{k_i}{\mu}\right)w_j \right) w_i
\]
In the simplified case $k_i = \mu$, $\omega_i = \omega$ and $\beta = 0$, we obtain finally
\[
	w_{i,t}-d_i \Delta w_i = \left( \lambda - \omega  -  \sum_{j = 1}^N w_j \right) w_i = \left( a -  \sum_{j = 1}^N w_j \right) w_i.
\]
We thus obtain the model of Dockery~et~al.\ \cite{Dockery} with $a = \lambda - \omega$. Notice that we could consider that $\lambda$ and $\omega$ depend on the location in space (certain locations being more favorable than others). More details can be found in \cite{BZ_ecology}.
\end{remark}
We have the following

\begin{proposition}\label{prop dock new}
For a fixed $i \in \{1,\dots,N\}$, let $W$ be the stationary solution of \eqref{eqn model} which has only the $i$-th densities of predator which is nontrivial. Then $\mathbf{v}$ is constant and $\mathbf{v} = (0, \dots, \tilde w_i, \dots, 0, \tilde u)$ with
\[
	\tilde w_i = \frac{\lambda k_i - \mu \omega_i}{k_i^2}, \quad \tilde u = \frac{\omega_i}{k_i}.
\]
There exists $\bar \beta \geq 0$ such that if $\beta \geq \bar \beta$ then $W$ is asymptotically stable with respect to perturbations in $\C^{2,\alpha}(\overline{\Omega})$. More explicitly, this holds whenever $\beta$ satisfies the system of inequalities
\[
	\beta  \geq \frac{k_j}{a_{ji} \tilde w_i} \left( \frac{\omega_i}{k_i} - \frac{\omega_j}{k_j} \right) \qquad \forall j \neq i.
\]
\end{proposition}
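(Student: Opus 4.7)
The plan is to verify the explicit formulas for the constant solution $W$ by direct substitution, and then to prove asymptotic stability via the principle of linearized stability, whose main content here is the recognition of a block-triangular structure of the linearization around $W$.

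First, setting $w_j \equiv 0$ for $j \neq i$ in the stationary version of \eqref{eqn model eqn} reduces the system to the two algebraic equations $-\omega_i + k_i \tilde u - \mu_i \tilde w_i = 0$ and $\lambda - \mu \tilde u - k_i \tilde w_i = 0$, which determine the positive pair $(\tilde w_i, \tilde u)$ stated in the proposition (the formula for $\tilde w_i$ being the one recorded in the proposition).

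For stability, I would linearize around $W$ by writing $w_j = \delta_{ji}\tilde w_i + \phi_j$ and $u = \tilde u + \psi$ and computing the Jacobian of the reaction term. The crucial observation is that for every $j \neq i$, every term of $f_j = (-\omega_j + k_j u - \mu_j w_j - \beta \sum_{l \neq j} a_{jl} w_l) w_j$ carries a factor of $w_j$, which vanishes at $W$; consequently all partial derivatives of $f_j$ at $W$ other than
\[
    \partial_{w_j} f_j \big|_W = -\omega_j + k_j \tilde u - \beta a_{ji}\tilde w_i
\]
are zero. Hence the linearized operator has block-triangular form: each $\phi_j$ ($j \neq i$) satisfies the scalar decoupled equation
\[
    \phi_{j,t} - d_j \Delta \phi_j = (k_j\tilde u - \omega_j - \beta a_{ji}\tilde w_i)\phi_j, \qquad \partial_\nu \phi_j = 0,
\]
while the pair $(\phi_i, \psi)$ solves a coupled $2 \times 2$ parabolic system driven by
\[
    M = \begin{pmatrix} -\mu_i \tilde w_i & k_i \tilde w_i \\ -k_i \tilde u & -\mu \tilde u \end{pmatrix}
\]
(after using the stationarity relations to simplify the diagonal entries), together with off-diagonal couplings to the $\phi_l$'s that are irrelevant for the spectrum by the block-triangular structure.

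It then suffices to check that each diagonal block has spectrum in a strict left half-plane. For each scalar $\phi_j$ block, spectral expansion in Neumann eigenfunctions of $-\Delta$ (with eigenvalues $0 = \gamma_0 < \gamma_1 \leq \cdots$) gives spectrum $\{k_j\tilde u - \omega_j - \beta a_{ji}\tilde w_i - d_j\gamma_k\}_{k \geq 0}$, whose supremum is attained at $\gamma_0 = 0$; requiring this to be strictly negative and inserting $\tilde u = \omega_i/k_i$ recovers exactly the inequality on $\beta$ stated in the proposition. For the $(\phi_i, \psi)$ block, a similar expansion reduces stability to showing that the matrix $M - \mathrm{diag}(d_i\gamma_k, D\gamma_k)$ has both eigenvalues with negative real part for every $\gamma_k \geq 0$; this is immediate since its trace $-\mu_i \tilde w_i - \mu\tilde u - (d_i + D)\gamma_k$ is strictly negative and its determinant $(\mu_i\tilde w_i + d_i\gamma_k)(\mu\tilde u + D\gamma_k) + k_i^2 \tilde w_i \tilde u$ is strictly positive, independently of $\beta$. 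Combining the two, the full linearized operator has spectrum in $\{\mathrm{Re}\, z \leq -\delta\}$ for some $\delta > 0$, and the classical principle of linearized stability yields asymptotic stability of $W$ in $\C^{2,\alpha}(\overline{\Omega})$. The only conceptually nontrivial step is identifying the block-triangular decomposition; the rest is scalar spectral analysis and a routine $2 \times 2$ trace-determinant check.
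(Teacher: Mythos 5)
Your linearization and spectral analysis follow essentially the same route as the paper: you identify that the components $j\neq i$ decouple at $W$ (since every term of $f_j$ carries a factor $w_j$ that vanishes there), derive the $\beta$-condition from the scalar decoupled blocks, and then analyze the coupled $(w_i,u)$ block. Casting the decoupling as block-triangularity and replacing the paper's explicit eigenvalue formula by a trace--determinant check are valid, slightly cleaner reformulations of the same argument (the paper instead argues by contradiction that any eigenfunction with $\Re\gamma\leq0$ must have $\varphi_j\equiv0$ for $j\neq i$).

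There is, however, a genuine gap in the first step. The proposition asserts that \emph{any} stationary $i$-simple solution of \eqref{eqn model} is constant; you merely write down the algebraic system a constant $i$-simple solution would solve and read off $(\tilde w_i,\tilde u)$. This establishes that a constant $i$-simple solution exists, but does nothing to exclude non-constant ones. The paper deals with this by invoking Mimura's theorem \cite{Mimura} together with \cite{ConwaySmoller}, which say that the one-predator/one-prey Lotka--Volterra system with homogeneous Neumann data has only constant stationary solutions. Without this ingredient the reduction to the algebraic system is unjustified, and so is the entire ensuing computation of $\tilde w_i$, $\tilde u$, and the constant-coefficient linearization. You should cite (or reprove) this non-existence of patterned $i$-simple equilibria.

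A smaller issue worth flagging: the displayed formulas $\tilde u=\omega_i/k_i$ and $\tilde w_i=(\lambda k_i-\mu\omega_i)/k_i^2$ only solve your algebraic system when $\mu_i=0$, yet you carry $\mu_i$ as a free parameter in the matrix $M$ and the trace--determinant bounds. The paper's linearized operator has no $\mu_i$ terms, so it is implicitly working with $\mu_i=0$ in this proposition; you should either state this explicitly or replace the formulas for $(\tilde w_i,\tilde u)$ by the correct $\mu_i$-dependent ones.
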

\begin{proof}
First of all, by \cite[Theorem 1]{Mimura} and \cite{ConwaySmoller}, we have that the only solution of the system with only the $i$-th density of predator non trivial is the constant solution $W$. The study of the stability of this solution is based on a simple analysis of the linearized system around it: we consider the G\^ateaux differential around $\mathbf{v}$ of the operator describing the system, which is given by
\[
	L(\mathbf{v})[w_1, \dots, w_N, u] = \begin{cases}
		- d_i \Delta w_i - k_i \tilde w_i u + \beta \tilde w_i\sum_{j \neq i} a_{ij} w_j\\
		- d_j \Delta w_j + \left[ k_j \left( \frac{\omega_j}{k_j} - \frac{\omega_i}{k_i} \right)+ \beta\tilde w_i  a_{ji} \right]w_j &\text{for $j \neq i$}\\
		-D \Delta u + \mu \frac{\omega_i}{k_i} u + \omega_i w_i
	\end{cases}
\]
for all $(w_1, \dots, w_N,u) \in \C^{2,\alpha}(\overline{\Omega})$ with homogenous Neumann boundary conditions. To ensure the stability of the solution we need to show that the spectrum of $L$ is contained in $\mathbb{C}^+ = \{z \in \mathbb{C} : \Re(z) > 0 \}$, that is for any $(w_1, \dots, w_N, u) \neq 0$ and $\gamma \in \mathbb{C}$ 
\[
	L(\mathbf{v})[w_1, \dots, w_N, u] = \gamma (w_1, \dots, w_N, u) \implies \Re(\gamma) > 0.
\]
In the previous system, the components corresponding to $j \neq i$ are decoupled from the others, and thus their presence does not influence the stability of $W$. This solution $W$ is stable if and only if
\[
	k_j \left( \frac{\omega_j}{k_j} - \frac{\omega_i}{k_i} \right)+ \beta\tilde w_i  a_{ji} > 0 \qquad \forall j \neq i
\]
which gives the condition established by the proposition; indeed, under this assumption the components $w_j$ with $j \neq i$ are necessarily trivial. Let us show that this condition is enough to ensure the stability: we suppose that the previous system of inequalities holds but there exist $(w_1, \dots, w_N, u) \neq 0$ and  $\gamma \in \mathbb{C}$ with $\Re(\gamma) \leq 0$ solution to
\[
	L(\mathbf{v})[w_1, \dots, w_N, u] = \gamma (w_1, \dots, w_N, u).
\]
Then necessarily $w_j = 0$ for all $j \neq i$, and the system is reduced to
\begin{equation}\label{eqn eigen stab}
	\begin{cases}
		- d_i \Delta w_i  = \gamma w_i + k_i \tilde w_i u \\
		-D \Delta u = -  \omega_i w_i + \left(\gamma - \mu \frac{\omega_i}{k_i} \right) u \\
		\partial_\nu w_i = \partial_\nu u = 0 &\text{ on $\partial \Omega$}
	\end{cases}
\end{equation}
Since any weak solution to the previous system is regular, the stability in $\C^{2,\alpha}(\Omega)$ can be deduced from the solvability of the system in $H^1(\Omega)$. To analyze it, let $\{(\gamma_h,\psi_h)\}_{h \in \N}$ be the spectral resolution of the Laplace operator with homogeneous Neumann boundary condition in $\Omega$ (let us recall that $\gamma_0 = 0$ and $\gamma_h > 0$ for $h > 0$);  since $\{\psi_h\}_{h \in \N}$ is a complete basis of $L^2(\Omega)$, we can write
\[
	w_i = \sum_{h = 0}^\infty a_h \psi_h \quad \text{ and } u = \sum_{h = 0}^\infty b_h \psi_h
\]
as series converging in $L^2(\Omega)$. Inserting these relations in \eqref{eqn eigen stab} and using the orthogonality of the eigenfunctions, we see that the linear system \eqref{eqn eigen stab} is equivalent to the sequence of algebraic eigenvalue problems
\[
	\begin{cases}
		d_i \gamma_h a_h - k_i \tilde w_i b_h =  \gamma a_h \\
		\left(D \gamma_h +  \mu \frac{\omega_i}{k_i} \right) b_h +  \omega_i a_h  = \gamma b_h
	\end{cases} \text{for $h\in \N$.}
\]
By direct inspection, we can observe that $a_h = 0$ if and only if $b_h = 0$. Thus, solving the first equation in $b_h$ and substituting the result in the second, we find that $\gamma$ must be a solution to
\[
	\left(D \gamma_h +  \mu \frac{\omega_i}{k_i} - \gamma \right)\left(d_i \gamma_h -\gamma \right) + k_i \tilde w_i \omega_i = 0
\]
that is
\[
	\gamma = \frac12 \left[ \left( (D+d_i) \gamma_h +  \mu \frac{\omega_i}{k_i}\right) \pm \sqrt{\left( (D+d_i) \gamma_h +  \mu \frac{\omega_i}{k_i}\right)^2 - 4 k_i \tilde w_i \omega_i} \right]
\]
and in particular $\Re(\gamma) > 0$.
\end{proof}

We observe that the diffusion rates do not play any role in the stability of the solutions, while a crucial role is played by the ratio $\omega_i/k_i$. In particular if $i$ is such that
\[
	 \frac{\omega_i}{k_i} < \frac{\omega_j}{k_j}  \qquad \forall j \neq i
\]
then the solution $W$ is asymptotically stable also in a slightly cooperative environment, that is for $\beta < 0$ and small in absolute value. This is a consequence of the fact that the semi-trivial solutions are constant and the different diffusion rates do not play a direct role in the stability of the solution (that is, advantage of low/high diffusion rate). In this setting, the quantity $\omega_i/k_i$ can be interpreted as the fitness of the $i$-th population.

One could then wonder whether the previous stability result is a spurious consequence either of the fact that the simple solutions are constant or of another specific feature of this particular formulation of the system. To clarify this issue, we shall now adapt the proof to a very general framework. Let us consider the following operator
\[
	\mathcal{S}_{\beta}(\mathbf{v}) := \begin{cases}
		\elle_{i} w_{i}  - \left[f_{i}(x,u,w_i) - \beta \sum_{j \neq i} g_{ij} (x, w_i,w_j)\right] w_i &\text{for all $i \in 
		 \{1,\dots,N\}$}\\
		\elle u  - f(x,u,w_1,\dots,w_N) u
	\end{cases}
\]
defined for $\mathbf{v} = (w_1, \dots, w_N, u)$ in the set
\[
	\mathcal{F}(\Omega) = \left\{ \mathbf{v} \in \C^{2,\alpha}(\overline{\Omega}; \R^{N+1}) : 	\partial_{\nu_i} w_{i} = \partial_\nu u =  0 \text{ on $\partial\Omega$}\right\}
\]
where the respective operators $\elle_i$ and $\elle$ stand for linear strongly elliptic operators of the form
\[
	\elle_i w_i = -\div(A_i(x) \nabla w_i), \quad \elle u = -\div(A(x) \nabla u)
\]
associated with some smooth and uniformly elliptic symmetric matrices $A_i$ and $A$, and the$\nu_i$ and $\nu$ denote the co-normal vector fields associated to the corresponding elliptic operators. We assume in the following that all the terms in the operator $\mathcal{S}_{\beta}$ are sufficiently smooth to justify the following computations, and moreover we suppose that there exists positive constants $ C$ such that for any $\mathbf{v} \in \mathcal{F}(\Omega)$ of non negative components we have
\[	
	\begin{cases}
		f_i(x,u,w_i) \leq C(1+u-w_i)\\
		f(x,u,w_1, \dots, w_N) \leq C(1-u)\\
		g_{ij}(x,w_i,w_j) \geq 0
	\end{cases}
\]
Based on the previous notation, a function $\mathbf{v} \in \mathcal{F}(\Omega)$ is a solution of the generalized model if
\[
	\mathcal{S}_\beta(\mathbf{v}) = 0
\]
while a function $\mathbf{v} \in \C^{1}(\R^+;\mathcal{F}(\Omega)) \cap \C(\overline{\R^+};\mathcal{F}(\Omega))$ is a solution to the parabolic model if
\[
	\begin{cases}
		\partial_t \mathbf{v} + \mathcal{S}_\beta(\mathbf{v}) = 0 &t>0\\
		\mathbf{v}(0)=\mathbf{v}_0 &\mathbf{v}_0 \in \mathcal{F}(\Omega).
	\end{cases}
\]
Using the previous assumptions, we have
\begin{lemma}
For any non-negative initial datum $\mathbf{v}_0 \in \mathcal{F}(\Omega)$ there exists a unique solution $\mathbf{v}$ of the previous parabolic problem. Moreover, there exists $T>0$ and $M>0$, independent of $\beta$, such that 
\[
	0 \leq w_1(t,x), \dots, w_N(t,x), u(t,x) \leq M \qquad \text{for all $t \geq T$, $x \in \overline{\Omega}$}.
\]
If there exist $i \in \{1,\dots, N\}$, $t>0$ and $x_0 \in \overline{\Omega}$ such that $w_i(t,x_0) = 0$ (respectively, $u(t,x_0) = 0$), then $w_i \equiv 0$ (respectively, $u\equiv 0$).
\end{lemma}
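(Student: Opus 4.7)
The plan is to proceed in direct analogy with the proof of Lemma \ref{lem existence and bounds}, exploiting the structural sign conditions on $f_i$, $f$ and $g_{ij}$ so that the competition term $-\beta\sum_{j\neq i} g_{ij}w_i w_j$ works in our favor and drops out of every upper estimate.

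First, local-in-time existence and uniqueness in $\mathcal{F}(\Omega)$ follow from the standard semilinear parabolic theory applied to each component: the operators $\elle_i,\elle$ are smooth uniformly elliptic in divergence form with Neumann (conormal) data, and the reaction nonlinearities are locally Lipschitz in $\mathbf v$. A Banach fixed-point argument in $\C([0,t_0];\mathcal{F}(\Omega))$ for $t_0>0$ small thus produces a unique classical solution. Preservation of non-negativity is then immediate: the trivial function is a solution of each scalar equation viewed separately (with the other components frozen), so the parabolic comparison principle applied component-wise to non-negative initial data forces $w_i,u\geq 0$ wherever the solution is defined.

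Next I would derive the $\beta$-independent bound. Using $f(x,u,w_1,\dots,w_N)\leq C(1-u)$ and $w_i\geq 0$, the equation for $u$ gives the differential inequality $u_t+\elle u\leq C(1-u)u$. Let $U(t)$ solve the ODE $\dot U=C(1-U)U$ with $U(0)=\max\{1,\sup_\Omega u_0\}$; then $U$ is a spatial super-solution with Neumann data, so $u(t,x)\leq U(t)$ by comparison, and $U(t)\to 1$, giving a uniform bound on $u$. Feeding this into the $w_i$ equation and crucially using $g_{ij}\geq 0$ so that the $\beta$-term drops out of the inequality, we get $w_{i,t}+\elle_i w_i\leq C(1+U-w_i)w_i$. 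Comparing with the ODE $\dot W_i=C(1+U-W_i)W_i$ started above $\sup_\Omega w_i^0$ yields a uniform bound on $w_i$ that is independent of $\beta$ as claimed, hence a common $M$ and $T$ valid for all $\beta\geq 0$. The a priori $L^\infty$ bound precludes blow-up, so the local solution extends globally, and parabolic regularity upgrades it to the class $\C^1(\R^+;\mathcal{F}(\Omega))$.

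For the final dichotomy I would invoke the strong maximum principle. Once existence is granted, each $w_i$ solves a scalar linear parabolic equation $w_{i,t}+\elle_i w_i= c_i(x,t)w_i$ with conormal boundary data, where $c_i(x,t):=f_i(x,u,w_i)-\beta\sum_{j\neq i}g_{ij}(x,w_i,w_j)$ is bounded on $[0,T]\times\overline\Omega$ thanks to the uniform bounds above. Since $w_i\geq 0$, the parabolic strong maximum principle (together with the Hopf boundary-point lemma at points of $\partial\Omega$, using that $\nu_i$ is transverse) forces $w_i\equiv 0$ on $\overline\Omega\times[0,t]$ as soon as $w_i$ vanishes at any point $(t,x_0)\in(0,\infty)\times\overline\Omega$, and uniqueness then propagates this to all later times. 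The same argument applied to $u$ gives the corresponding conclusion. The only mild obstacle is verifying that the Hopf lemma applies at boundary points despite the conormal (rather than pure Neumann) derivative, which is standard for smooth uniformly elliptic $A_i$ with smooth $\partial\Omega$.
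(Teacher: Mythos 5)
Your proposal is correct and follows exactly the route the paper intends: the paper's proof is omitted with a reference to Lemma~\ref{lem existence and bounds}, and your argument mirrors that lemma's strategy—component-wise comparison with zero for non-negativity, ODE super-solutions $\dot U = C(1-U)U$ and $\dot W_i = C(1+U-W_i)W_i$ (where the sign condition $g_{ij}\geq 0$ lets the $\beta$-term be dropped from the upper estimate) for the $\beta$-independent $L^\infty$ bounds, and the parabolic strong maximum principle plus Hopf's boundary lemma in conormal form for the vanishing dichotomy. The argument is complete, including the correct observation that uniqueness of the full coupled system is what propagates $w_i\equiv 0$ (or $u\equiv 0$) forward in time once the strong maximum principle forces it on $[0,t]\times\overline\Omega$.
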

\begin{proof}
The proof follows directly from the maximum principle, and thus we omit it (see Lemma \ref{lem existence and bounds} for reasoning of this kind).
\end{proof}
In an analogous fashion, we have a corresponding result for the stationary model. Among the class of all possible solutions, we are interested in the particular case of solutions that have only one component among the first $N$ which is non-trivial.
\begin{definition}\label{def: simple sol}
For a given $i \in \{1, \dots, N\}$, a solution $\mathbf{v} \in \mathcal{F}(\Omega)$ is said to be $i$-simple if $w_j \equiv 0$ for all $j \neq i$ and the other components are positive.
\end{definition}
Let us observe that if $\mathbf{v} \in \mathcal{F}(\Omega)$ is an $i$-simple solution for $\mathcal{S}_\beta$, then it is an $i$-simple solution for any value of $\beta$.

For a given solution $\mathbf{v} \in \mathcal{F}(\Omega)$, let $L(\mathbf{v})$ be the G\^ateaux derivatives of $\mathcal{S}_\beta$ in $\mathcal{F}(\Omega)$, that is for any $\boldsymbol{\varphi} \in \mathcal{F}(\Omega)$:
\begin{multline*}
	L(\mathbf{v})[\boldsymbol{\varphi}] = \lim_{\eps \to 0} \frac{\mathcal{S}_\beta(\mathbf{v}+\eps \boldsymbol{\varphi}) -  \mathcal{S}_\beta(\mathbf{v})}{ \eps} \\
	= \begin{cases}
		\elle_{i} \varphi_{i}  -  \left[f_{i}(x,u,w_i) - \beta \sum_{j \neq i} g_{ij} (x, w_i,w_j)\right] \varphi_i\\
		\qquad - f_{i,u}(x,u,w_i)w_i \varphi - f_{i,w_i}(x,u,w_i)w_i \varphi_i \\
		\qquad + \beta \sum_{j \neq i}g_{ij,w_i}(x,w_i,w_j) w_i \varphi_i + \beta \sum_{j \neq i}g_{ij,w_j}(x,w_i,w_j) w_i \varphi_j\\
		\elle \varphi  - f(x,u,w_1,\dots,w_N) \varphi - f_{,u}(x,u,w_1, \dots, w_N)\varphi\\
		\qquad - \sum_{i=1}^N f_{,w_i}(x,u,w_1, \dots, w_N)\varphi_i
	\end{cases}
\end{multline*}
Analogously, for any fixed $i \in \{1, \dots, N\}$ we define the $i$-th partial derivatives $L_i(\mathbf{v})$ as the  G\^ateaux derivatives of $\mathcal{S}_\beta$ in $\mathcal{F}(\Omega)$ with respect to the direction $\boldsymbol{\varphi} \in \mathcal{F}(\Omega)$ such that $\boldsymbol{\varphi} = (0, \dots, \varphi_i, \dots, 0, \varphi)$, that is
\[
	L_i(\mathbf{v})[\boldsymbol{\varphi}] = \begin{cases}
		\elle_{i} \varphi_{i}  -  \left[f_{i}(x,u,w_i) - \beta \sum_{j \neq i} g_{ij} (x, w_i,w_j)\right] \varphi_i\\
		\qquad - f_{i,u}(x,u,w_i)w_i \varphi - f_{i,w_i}(x,u,w_i)w_i \varphi_i \\
		\qquad + \beta \sum_{j \neq i}g_{ij,w_i}(x,w_i,w_j) w_i \varphi_i\\
		0 \qquad \text{ for $j \neq i$}\\
		\elle \varphi  - f(x,u,w_1,\dots,w_N) \varphi - f_{,u}(x,u,w_1, \dots, w_N)\varphi\\
		\qquad - f_{,w_i}(x,u,w_1, \dots, w_N)\varphi_i
	\end{cases}
\]

Accordingly, we recall that a solution $\mathbf{v} \in \mathcal{F}(\Omega)$ is (strongly) stable if any non-trivial solution $(\gamma, \boldsymbol{\varphi})$ of
\[
	L(\mathbf{v})[\boldsymbol{\varphi}] = \gamma \boldsymbol{\varphi}
\]
has necessarily $\Re(\gamma) > 0$. For $i$-simple solutions we have

\begin{definition}
For a given $i \in \{1,\dots,N\}$, an $i$-simple solution $\mathbf{v} \in \mathcal{F}(\Omega)$ is \emph{one-predator} stable if any non-trivial solution $(\gamma, \boldsymbol{\varphi})$ of
\[
	L_i(\mathbf{v})[\boldsymbol{\varphi}] = \gamma \boldsymbol{\varphi}
\]
with $\boldsymbol{\varphi} = (0, \dots, \varphi_i, \dots, 0, \varphi)$ has necessarily $\Re(\gamma) > 0$.
\end{definition}

An $i$-simple solution is thus one-predator stable if it is stable with respect to all the admissible perturbations that leave unchanged the zero components $w_j$ for $j \neq i$. Clearly, if an $i$-simple solution is stable it is also one-predator stable: under suitable conditions, the inverse is true.
\begin{proposition}\label{prp dock full}
For a given $i \in \{1, \dots, N\}$, let us assume that
\[
	\inf_{x \in \Omega} g_{ji}(x,0,s) > 0 \qquad \text{for all $s > 0$ and $j \neq i$}.
\]
If $\mathbf{v} \in \mathcal{F}(\Omega)$ is an $i$-simple one-predator stable solution $\mathbf{v} \in \mathcal{F}(\Omega)$, then there exists $\bar \beta$ such that $\mathbf{v}$ is a stable solution for all $\beta > \bar \beta$.
\end{proposition}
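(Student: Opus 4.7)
The plan is to exploit the block-lower-triangular structure of the linearization $L(\mathbf{v})$ at an $i$-simple solution. The crucial observation is that, since $w_j \equiv 0$ for every $j \neq i$, each term in the explicit formula for $L(\mathbf{v})[\boldsymbol{\varphi}]$ that carries a factor $w_j$ vanishes. Reading off the $j$-th component of $L(\mathbf{v})[\boldsymbol{\varphi}]$ at $\mathbf{v}$ then gives, for every $j \neq i$,
\[
L(\mathbf{v})[\boldsymbol{\varphi}]_j \;=\; \elle_j\varphi_j + q_j^\beta(x)\, \varphi_j \;=:\; T_j^\beta \varphi_j,
\]
where $q_j^\beta(x) := -f_j(x,u,0) + \beta g_{ji}(x,0,w_i) + \beta \sum_{k \neq i, j} g_{jk}(x,0,0)$. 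In particular the $j$-th equation is completely decoupled from all other components, and $T_j^\beta$ is a second-order elliptic operator which, because the principal part $\elle_j = -\div(A_j \nabla)$ is symmetric, is self-adjoint on $L^2(\Omega)$ under homogeneous Neumann boundary conditions; hence its spectrum is discrete and real.

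I would then take any nontrivial eigenpair $(\gamma, \boldsymbol{\varphi})$ of $L(\mathbf{v})$ and split into two cases. If $\varphi_j \equiv 0$ for every $j \neq i$, the eigenvalue equation reduces to $L_i(\mathbf{v})[\boldsymbol{\varphi}] = \gamma \boldsymbol{\varphi}$ with a nontrivial $\boldsymbol{\varphi} = (0,\dots,\varphi_i,\dots,0,\varphi)$, so the one-predator stability hypothesis immediately gives $\Re\gamma > 0$. Otherwise, there is some $j \neq i$ with $\varphi_j \not\equiv 0$; the decoupled $j$-th equation then reads $T_j^\beta \varphi_j = \gamma \varphi_j$, so $\gamma$ is a real eigenvalue of $T_j^\beta$, bounded below by the Rayleigh quotient
\[
\gamma \;\geq\; \lambda_1(T_j^\beta) \;\geq\; \inf_{x \in \overline{\Omega}} q_j^\beta(x).
\]

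The remaining task is to show that this infimum becomes strictly positive for $\beta$ sufficiently large, uniformly in $j \neq i$. Since $\mathbf{v}$ is $i$-simple, the strong maximum principle together with the Hopf lemma (using the Neumann condition $\partial_{\nu_i} w_i = 0$ on $\partial\Omega$) forces $w_i > 0$ on the whole of $\overline{\Omega}$, so $w_i(\overline{\Omega}) \subset [m, M]$ for some $m > 0$. Continuity of $g_{ji}$ together with the hypothesis $\inf_{x \in \Omega} g_{ji}(x,0,s) > 0$ for each $s > 0$, applied on the compact product $\overline{\Omega} \times [m, M]$, then yields a constant $c_j > 0$ with $g_{ji}(x,0,w_i(x)) \geq c_j$ uniformly in $x$. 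Combined with the sign condition $g_{jk} \geq 0$, this gives
\[
q_j^\beta(x) \;\geq\; \beta c_j - \|f_j(\cdot,u,0)\|_{L^\infty(\Omega)},
\]
so the threshold
\[
\bar\beta \;:=\; \max_{j \neq i} \frac{\|f_j(\cdot,u,0)\|_{L^\infty(\Omega)}}{c_j}
\]
has the property that $\inf_x q_j^\beta(x) > 0$ for every $j \neq i$ as soon as $\beta > \bar\beta$.

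Combining the two cases shows that $\Re\gamma > 0$ for every $\gamma \in \sigma(L(\mathbf{v}))$ once $\beta > \bar\beta$, which is the stability claim. The main technical point I expect to have to handle carefully is the precise derivation of the decoupled equation for $T_j^\beta$, which requires isolating every vanishing $w_j$-factor in the G\^ateaux derivative, together with the self-adjointness of $T_j^\beta$ under Neumann conditions; the remainder of the argument is then a transparent generalization of the explicit spectral computation performed in the proof of Proposition \ref{prop dock new}, which may be viewed as the constant-coefficient prototype of the present situation.
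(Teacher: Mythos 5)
Your proposal is correct and follows essentially the same route as the paper's proof: decouple the $j$-th linearized equation using $w_j\equiv 0$, observe the resulting operator is self-adjoint (hence real spectrum), invoke the maximum principle to get $\inf_{\overline\Omega} w_i>0$ and thus a uniform positive lower bound on $g_{ji}(x,0,w_i(x))$, choose $\bar\beta$ so that the zero-order coefficient is positive, and fall back on the one-predator stability hypothesis when all off-diagonal components of the eigenfunction vanish. The only cosmetic difference is that you phrase the final inequality via the Rayleigh quotient of $T_j^\beta$ rather than by testing the weak formulation against $\varphi_j$, which is the same computation, and you explicitly retain the harmless nonnegative terms $\beta\sum_{k\neq i,j} g_{jk}(x,0,0)$ that the paper silently drops.
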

\begin{proof}
The $i$-simple solution $\mathbf{v} = (0, \dots, w_i, \dots, 0, u)$ is stable if
\[
	\begin{cases}
		\elle_{i} \varphi_{i}  -  \left[f_{i}(x,u,w_i) - \beta \sum_{j \neq i} g_{ij} (x, w_i,0)\right] \varphi_i\\
		\qquad - f_{i,u}(x,u,w_i)w_i \varphi - f_{i,w_i}(x,u,w_i)w_i \varphi_i \\
		\qquad + \beta \sum_{j \neq i}g_{ij,w_i}(x,w_i,0) w_i \varphi_i + \beta \sum_{j \neq i}g_{ij,w_j}(x,w_i,0) w_i \varphi_j = \lambda \varphi_i\\
		\elle_{j} \varphi_{j}  -  \left[f_{j}(x,u,0) - \beta g_{ji} (x, 0,w_i)\right] \varphi_j = \lambda \varphi_j\\
		\elle \varphi  - f(x,u,0,\dots,w_i, \dots, 0) \varphi - f_{,u}(x,u,0,\dots,w_i, \dots, 0)\varphi\\
		\qquad - \sum_{i=1}^N f_{,w_i}(x,u,0,\dots,w_i, \dots, 0)\varphi_i = \lambda \varphi
	\end{cases}
\]
has a nontrivial solution $\boldsymbol{\varphi} \in \mathcal{F}(\Omega)$ if and only if $\Re(\lambda) > 0$. Let us consider the equations of index $j \neq i$, which are decoupled from the other equations in the system. They read:
\[
	\begin{cases}
		\elle_{j} \varphi_{j}  =  \left[f_{j}(x,u,0) - \beta g_{jh} (x, 0,w_i) + \lambda \right] \varphi_j &\text{in $\Omega$}\\
		\partial^{\mathcal{L}_j}_{\nu} \varphi_j = 0 &\text{on $\partial \Omega$}
	\end{cases}
\]
Since the operator $\elle_j$ is self-adjoint\footnote{More precisely, the operator is self-adjoint if seen as an operator acting on $H^1(\Omega)$ functions, and the conclusion can be reached using the regularity assumptions on its coefficients.}, any non-trivial solution of the system must have $\lambda \in \R$. The solution $\mathbf{v}$ being an $i$-simple solution, by the maximum principle it follows that 
\[
	\inf_{x\in\Omega} w_i(x) = c > 0.
\]
As a result, thanks to our assumptions, there exists $\bar \beta \geq 0$ such that
\[
	\bar \beta \geq  \sup_{x \in \Omega} \frac{f_{j}(x,u,0)}{g_{ji} (x, 0,w_i)}  \qquad \text{for all $j \neq i$.}
\]
Choosing $\beta > \bar \beta$ and testing the equation in $\varphi_j$ by $\varphi_j$ itself, we obtain
\[
	\int_{\Omega} A_j(x) \nabla w_j \cdot \nabla w_j =  \int_{\Omega} \left[f_{j}(x,u,0) - \beta g_{ji} (x, 0,w_i) + \lambda \right] \varphi_j^2 < \lambda \int_{\Omega} \varphi_j^2 
\]
thus either $\lambda > 0$ or the component $\varphi_j = 0$. On the other hand, assuming that $\Re(\lambda) \leq 0$, we find a contradiction with the internal stability of the solution $\mathbf{v}$.
\end{proof}

\section{Existence of non homogeneous solutions: a bifurcation analysis}\label{sec bif}

We continue the investigation of the asymptotic properties of the system \eqref{eqn model}, by now studying the set of solutions of the corresponding (stationary) elliptic problem. We consider here the model \eqref{eqn model} under the assumption that the domain $\Omega$ is occupied by only two groups of predators, having the same parameters. This system reads:
\[
	\begin{cases}
		- d \Delta w_1 = \left( - \omega + k u - \beta w_2\right) w_1 &\text{ in $\Omega$}\\
		- d \Delta w_2 = \left( - \omega + k u - \beta w_1\right) w_2  &\text{ in $\Omega$}\\
		- D \Delta u = \left(\lambda - \mu u - k (w_1+w_2)\right)u &\text{ in $\Omega$}\\
		\partial_\nu w_i = \partial_\nu u = 0 &\text{ on $\partial \Omega$}
	\end{cases}
\]
for which we look for solutions $(w_1,w_2,u) \in \C^{2,\alpha}(\overline{\Omega})$. Let us point out that here we take $\mu_1 = \mu_2 = 0$. Alternatively, we can  easily generalize the results that will we show in the following to the case of positive saturation coefficients (though the computations are inevitably more involved). Since we are looking for stationary solutions, the system can be simplified by some linear substitutions. Indeed, letting
\[
	u \mapsto \frac{d}{D} u, \; \lambda \mapsto \lambda D, \;  \mu \mapsto \mu \frac{D^2}{d}, \;  k \mapsto k D, \;  \omega \mapsto \omega d, \;  \beta \mapsto \beta d
\]
we can reformulate the system as
\begin{equation}\label{eqn model red}
	\begin{cases}
		- \Delta w_1 = \left( - \omega + k u - \beta w_2\right) w_1 &\text{ in $\Omega$}\\
		- \Delta w_2 = \left( - \omega + k u - \beta w_1\right) w_2  &\text{ in $\Omega$}\\
		- \Delta u = \left(\lambda - \mu u - k (w_1+w_2)\right)u &\text{ in $\Omega$}\\
		\partial_\nu w_i = \partial_\nu u = 0 &\text{ on $\partial \Omega$}
	\end{cases}
\end{equation}
We recall the definition of the set
\[
	\mathcal{F}(\Omega) := \left\{(w_1,w_2,u) \in \C^{2,\alpha}(\overline{\Omega}) : \partial_\nu w_1 = \partial_\nu w_2 = \partial_\nu u = 0 \text{ on $\partial \Omega$}\right\}.
\]
We are interested in non negative solutions of the system. Letting all the other parameters of the model fixed, we shall study the set of the solutions of \eqref{eqn model red} by varying the competition strength $\beta$. Let us recall that the assumption (H) holds, that is, in this context, $\lambda k > \mu \omega$.

We start by recalling a result concerning the regularity of solutions of system \eqref{eqn model red}. This result follows from Lemma \ref{lem existence and bounds}, but we report it here for the reader's convenience.

\begin{lemma}\label{lem reg ell}
Let $(w_1,w_2, u) \in H^1(\Omega)$ be a non negative weak solution to \eqref{eqn model red}. Then
\begin{itemize}
	\item the solutions are classical. More precisely,  $(w_1,w_2,u) \in \C^\infty(\Omega) \cap \C^{2,\alpha}(\overline{\Omega})$ for any $\alpha < 1$ and the regularity is limited only by that of $\Omega$;
	\item $(w_1,w_2,u)$ are non negative and bounded uniformly in $\beta$, that is 		
	\begin{equation}\label{eqn upper bound 2}
		\begin{cases}
			w_1 \geq 0, w_2 \geq 0, 0 \leq u \leq \lambda/ \mu \\
			u + w_1 + w_2 \leq \displaystyle \frac{( \lambda + \omega) \lambda}{\mu \omega}
		\end{cases}
	\end{equation}
	and either all the inequalities are strict or the solution is constant;
\end{itemize}
\end{lemma}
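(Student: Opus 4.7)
The proof decomposes into two essentially independent pieces: a regularity bootstrap, and pointwise $L^\infty$ bounds obtained via the (strong) maximum principle and Hopf's lemma applied to carefully chosen scalar auxiliary functions. I do not expect anything to be genuinely delicate.

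For the regularity claim, the nonlinearities in \eqref{eqn model red} are polynomials (in fact quadratic) in the components. Starting from an $H^1$-weak solution, Sobolev embedding places the relevant products $u w_i$ and $w_i w_j$ in some $L^p$ with $p>2$ (directly in low dimension, through an obvious bootstrap otherwise). $L^p$ elliptic regularity then produces $W^{2,p}$, Morrey's embedding yields $\C^{0,\alpha}$, and Schauder estimates iterate to $\C^{2,\alpha}(\overline{\Omega})$ for any $\alpha < 1$; interior $\C^\infty$ regularity follows from the smoothness of the nonlinearity.

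For non-negativity and the upper bound on $u$, each $w_i$ satisfies a linear equation $-\Delta w_i = c_i(x) w_i$ with bounded $c_i$, so the strong maximum principle (plus Hopf to handle the Neumann boundary condition) forces either $w_i \equiv 0$ or $w_i > 0$ strictly in $\overline{\Omega}$; the same argument applies to $u$. The bound $u \leq \lambda/\mu$ comes from introducing $v := \lambda/\mu - u$, which, using the $u$-equation, satisfies
\[
	-\Delta v + \mu u \, v = k(w_1+w_2)\, u \geq 0, \qquad \partial_\nu v = 0 \text{ on } \partial\Omega.
\]
The strong maximum principle then gives $v > 0$ strictly, unless $v \equiv 0$, in which case the right-hand side forces $w_1 + w_2 \equiv 0$ and we recover the constant configuration $(0,0,\lambda/\mu)$.

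The bound on the total mass $S := w_1 + w_2 + u$ is the key additional computation. Summing the three equations (using $\mu_1=\mu_2=0$) yields
\[
	-\Delta S + \omega S = (\lambda+\omega) u - \mu u^2 - 2\beta w_1 w_2.
\]
Setting $\Lambda := \lambda(\lambda+\omega)/(\mu\omega)$ and $V := \Lambda - S$, a direct manipulation gives
\[
	-\Delta V + \omega V = (\lambda+\omega)\bigl(\tfrac{\lambda}{\mu} - u\bigr) + \mu u^2 + 2\beta w_1 w_2 \geq 0,
\]
where the sign follows from the previously established $0 \leq u \leq \lambda/\mu$ and $w_i \geq 0$. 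The strong maximum principle combined with Hopf then delivers $V > 0$ strictly throughout $\overline{\Omega}$, since the degenerate case $V \equiv 0$ would demand simultaneously $u \equiv \lambda/\mu$ and $u \equiv 0$, which is incompatible with $\lambda > 0$. The only (mild) point requiring care is the strict-inequality alternative: one must trace back each possible saturation through the inequalities used and verify it propagates to a constant solution, the only non-trivial instance being $u \equiv \lambda/\mu$ (which via the $u$-equation forces $w_1 + w_2 \equiv 0$) and $w_i \equiv 0$ (which propagates trivially). This bookkeeping is the main, and still very mild, obstacle.
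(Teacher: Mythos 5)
Your proof is correct and follows essentially the same route as the paper: the regularity and sign properties come from bootstrap and the strong maximum principle, and the key total-mass bound is obtained by summing the three equations (so that the $\pm k u(w_1+w_2)$ terms cancel), discarding the non-positive terms $-\mu u^2$ and $-2\beta w_1 w_2$, and applying the maximum principle to the resulting differential inequality for $w_1+w_2+u$. The paper leaves all this implicit except for the summed inequality, so your write-up is merely a slightly more detailed rendering of the same argument (your substitution $V=\Lambda-S$ versus the paper's direct inequality $-\Delta S \leq (\lambda+\omega)\lambda/\mu - \omega S$ is cosmetic).
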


\begin{proof}
All the assertions in the statement are rather straightforward consequences of the maximum principle and the classical regularity theory of elliptic equations. We only observe that the last inequality follows by summing the three equations together. This yields to
\[
  -\Delta (u + w_1 + w_2) \leq (\lambda + \omega) u - \omega (u + w_1 + w_2) \leq (\lambda + \omega) \frac{\lambda}{\mu} - \omega (u + w_1 + w_2).
\]
We conclude again by virtue of the maximum principle.
\end{proof}

Lemma \ref{lem reg ell} gives a description of the solutions of the system \eqref{eqn model red}, but it contains no information about the existence of such solutions. In the following, our aim is to complete this aspect, showing that the set of solutions is rich. Before doing so, we need to introduce some notation. 

For a given solution $(w_1,w_2,u) \in \mathcal{F}(\Omega)$ of the system \eqref{eqn model red}, the G\^ateaux derivate in $\mathcal{F}(\Omega)$ associated to \eqref{eqn model red} computed at  $(w_1,w_2,u) $ is given by
\[
	L_\beta \boldsymbol{\varphi} = -\Delta \boldsymbol{\varphi} - A_\beta \boldsymbol{\varphi}, \quad \text{for any $\boldsymbol{\varphi} \in \mathcal{F}(\Omega)$}
\]
where $A_\beta = A_\beta(w_1,w_2,u) \in \C^{2,\alpha}(\overline{\Omega}, \R^{3\times3})$ is
\[
	A = A_\beta = \left( \begin{array}{ccc}
              -\omega + k u - \beta w_2 & - \beta w_1  & k w_1   \\
              -\beta w_2 & -\omega + k u - \beta w_1  &  k w_2  \\
             -k u & - k u & \lambda - 2 \mu u - k w_1 - k w_2  
          \end{array} \right).
\]
The solution $(w_1,w_2,u)$ is said to be (strongly linearly) stable if any non-trivial solution $(\gamma, \boldsymbol{\varphi})$ of the linearized equation
\[
	L_\beta  \boldsymbol{\varphi} = \gamma \boldsymbol{\varphi} 
\]
has necessarily $\Re(\gamma) > 0$ and weakly stable we can only infer that $\Re(\gamma) \geq 0$. It is said to be (strongly linearly) unstable if, on the contrary, there exists a non-trivial solution with $\Re(\gamma) < 0$.

If the solution $(w_1,w_2,u)$ in the previous definition is constant, its stability can be directly deduced from the spectrum of the matrix $A_\beta$ or, more explicitly, from that of $-A$. We start with the simplest scenario, that is the limit case $\beta = 0$. Under this assumption, since the densities of predators do not interact directly with each other, we can simplify drastically the system and give a complete description of the set of solutions of the system.
\begin{lemma}\label{lem trivial branch of sol}
The unique non negative solutions $(w_1, w_2, u)$ to the system \eqref{eqn model red} with $\beta = 0$ are the two unstable constant solutions
\[
	 (0,0,0), \left(0,0,\frac{\lambda}{\mu}\right)
\]
and the one-parameter family of (weakly) stable ones
\[
	 s \in [0,1] \mapsto \left(\frac{\lambda k - \mu \omega}{k^2} s, \frac{\lambda k - \mu \omega}{k^2} (1-s),\frac{\omega}{k}\right).
\]
\end{lemma}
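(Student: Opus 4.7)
\emph{Proof plan.} The strategy is a case analysis according to which of the components $w_1,w_2$ vanish identically. By the strong maximum principle applied to each equation, each of $w_1,w_2,u$ is either identically zero in $\overline{\Omega}$ or strictly positive there. This gives three scenarios: (i) $w_1\equiv w_2\equiv 0$; (ii) exactly one of $w_1,w_2$ is non-trivial; (iii) both $w_1,w_2$ are strictly positive.

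In case (i) the function $u$ satisfies the scalar logistic equation $-\Delta u=(\lambda-\mu u)u$ with Neumann condition; testing against $u-\lambda/\mu$ and integrating by parts yields
\[
\int_\Omega|\nabla u|^2+\mu\int_\Omega u\bigl(u-\tfrac{\lambda}{\mu}\bigr)^{2}=0,
\]
so $u\equiv 0$ or $u\equiv\lambda/\mu$, recovering the first two solutions. In case (iii), both $w_1$ and $w_2$ are strictly positive solutions of the common linear equation $-\Delta w=(-\omega+ku)w$ with Neumann data. Setting $v:=w_1/w_2$, a direct computation from the two equations gives $\operatorname{div}(w_2^2\nabla v)=0$ in $\Omega$ with $\partial_\nu v=0$ on $\partial\Omega$; testing against $v$ itself yields $\int_\Omega w_2^2|\nabla v|^2=0$, so $v\equiv c$ for some constant $c>0$. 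Writing $W:=w_1+w_2=(1+c)w_2$, the pair $(W,u)$ then solves the single-predator system
\[
-\Delta W=(-\omega+ku)W,\qquad -\Delta u=(\lambda-\mu u-kW)u,
\]
which is exactly the system governing case (ii). I then invoke the uniqueness theorem of Mimura~\cite{Mimura} and Conway--Smoller~\cite{ConwaySmoller} already used in the proof of Proposition~\ref{prop dock new}: the only non-negative solution is the constant one, $u=\omega/k$ and $W=(\lambda k-\mu\omega)/k^2$. Parametrizing by $s:=c/(1+c)\in(0,1)$ in case (iii) and $s\in\{0,1\}$ in case (ii) reproduces the announced one-parameter family.

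For the stability statement, each solution above is spatially constant, so the linearized operator $L_0=-\Delta-A_0$ has spectrum $\{\gamma_h-\zeta:\gamma_h\in\operatorname{spec}(-\Delta_{\mathrm{Neum}}),\ \zeta\in\operatorname{spec}(A_0)\}$. At $(0,0,0)$ one has $A_0=\operatorname{diag}(-\omega,-\omega,\lambda)$, so the eigenvalue $\lambda>0$ of $A_0$ (via the constant Neumann eigenfunction) produces the eigenvalue $-\lambda<0$ of $L_0$, i.e.\ instability; at $(0,0,\lambda/\mu)$ one has $A_0=\operatorname{diag}((\lambda k-\mu\omega)/\mu,(\lambda k-\mu\omega)/\mu,-\lambda)$, and hypothesis~(H) makes the first two diagonal entries positive, again yielding instability. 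Along the family, expanding the $3\times3$ determinant gives
\[
\chi_{A_0}(\zeta)=\zeta\Bigl(\zeta^{2}+\tfrac{\mu\omega}{k}\zeta+\tfrac{\omega(\lambda k-\mu\omega)}{k}\Bigr);
\]
both coefficients of the quadratic factor are positive, so Routh--Hurwitz yields two roots with strictly negative real part. Hence $\operatorname{spec}(A_0)=\{0,a_1,a_2\}$ with $\Re(a_j)<0$: the zero eigenvalue (tangent to the curve of solutions) prevents strong stability, while all other eigenvalues of $L_0$ have strictly positive real part, which is exactly weak stability.

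The principal obstacle is the single-predator uniqueness statement invoked in cases (ii)/(iii). This is a non-trivial classical fact about Lotka--Volterra systems with Neumann data, but since it has already been used in Proposition~\ref{prop dock new}, it is available off the shelf.
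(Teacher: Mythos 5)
Your proof is correct, but the decomposition differs from the paper's in one notable way, so a brief comparison is warranted. The paper simply sets $V := w_1+w_2$ at the outset and observes that $(V,u)$ solves the classical one-predator Lotka--Volterra system, so by Mimura's theorem $(V,u)$ is a constant; substituting $u=\omega/k$ back into the equations for $w_1,w_2$ then forces $-\Delta w_i=0$, whence $w_i$ is constant by the Neumann condition. You instead trichotomize according to which of $w_1,w_2$ vanish (using the strong maximum principle to make that a clean alternative), treat case (i) by an explicit energy identity, and in case (iii) run the ratio argument $\div(w_2^2\nabla v)=0$ for $v=w_1/w_2$ to show $w_1,w_2$ are proportional before reducing to the single-predator system. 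Both routes go through the Mimura uniqueness result for the reduced system, which is the genuine crux; your ratio trick is an extra step the paper avoids by summing first, although it does make transparent why a whole segment of solutions appears (the common ratio $c$ is unconstrained). On stability, the paper explicitly postpones the analysis to Lemma~\ref{lem matrix A}, whereas you carry it out inline via the spectral decomposition of $L_0=-\Delta-A_0$; your Routh--Hurwitz computation along the family matches the paper's later Lemma~\ref{lem matrix A}. One small imprecision: the linearization matrix $A_0$ at $(0,0,\lambda/\mu)$ is lower triangular rather than diagonal (the last row carries $-k\lambda/\mu$ entries), though the eigenvalues you list are correct and the instability conclusion under hypothesis (H) is unaffected.
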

\begin{proof}
In this proof, we shall only classify the solutions; the study of the stability will be postponed until Lemma \ref{lem matrix A}, where we shall address the question about stability of constant solutions for $\beta \geq 0$ more generally.

Since for $\beta = 0$ the densities of predators do not interact directly with each other, we can simplify the system introducing the new variable $V = w_1 + w_2$, which, together with $u$ is a solution of the classical (i.e.\ one predator) Lotka-Volterra system
\[
	\begin{cases}
		- \Delta V = - \omega V + k V u &\text{ in $\Omega$}\\
		- \Delta u = \lambda u - \mu u^2 - k V u &\text{ in $\Omega$}\\
		\partial_\nu V = \partial_\nu u = 0 &\text{ on $\partial \Omega$}.
	\end{cases}
\]
By a classical result of Mimura \cite[Theorem 1]{Mimura} it follows that the previous system has only constant solutions, that is solutions of the algebraic system
\[
	\begin{cases}
		(k u - \omega )V =0\\
		(\lambda - \mu u - k V )u = 0
	\end{cases}
\]
When $V = 0$, we have the solutions $u = 0$ or $u = \lambda / \mu$ which correspond to the first two solutions in the statement (recall that $w_1$ and $w_2$ are non negative, that is, in this case, $w_1 = w_2 = 0$). On the other hand, if $u = \omega / k$, we obtain the solution $V = w_1 + w_2 =  (\lambda k - \mu \omega)/ k^2$. Substituting this information in \eqref{eqn model red} we obtain that both $w_1$ and $w_2$ are harmonic functions, hence constants. 
\end{proof}

As we shall see later, the value $\beta = 0$ corresponds to a bifurcation point of multiplicity one for the system \eqref{eqn model red} around the solution
\[
	(w_1,w_2,u) =  \left(\frac{\lambda k - \mu \omega}{2k^2}, \frac{\lambda k - \mu \omega}{2k^2},\frac{\omega}{k}\right),
\]
so that the one-parameter family of solutions of Lemma \ref{lem trivial branch of sol} is nothing but the branch of solutions emanating from it.

\begin{lemma}\label{lem matrix A}
When $\beta > 0$, system \eqref{eqn model red} admits four different types of constant solutions:
\begin{enumerate}[ label = $(\alph*)$]
	\item the solution $(0,0,0)$, which is strongly unstable;
	\item the solution
	\[
	w_1 = 0, \, w_2 = 0, \,u = \frac{\lambda}{k}
	\]
	which is strongly unstable;
	\item the solutions 
	\[
	w_1 = \frac{\lambda k - \mu \omega}{k^2}, \, w_2 = 0, \, u = \frac{\omega}{k} \quad \text{ and } w_1 = 0, \, w_2 = \frac{\lambda k - \mu \omega}{k^2}, \, u = \frac{\omega}{k}
	\]
	which are strongly stable;
	\item the family of solutions
	\[
		w_1 = w_2 = \frac{\lambda k - \mu \omega}{\mu \beta + 2 k^2}, \; u = \frac{\lambda \beta + 2 k \omega}{\mu \beta + 2 k^2}
	\]
	which are unstable for $\beta > 0$. In particular, in this latter case,
	\[
		\sigma(A_\beta) = \left\{ \beta\frac{\lambda k - \mu \omega}{\mu \beta + 2 k^2}, \gamma_{1,\beta}, \gamma_{2,\beta}\right\}
	\]
	where $\gamma_{1,\beta}$ and $\gamma_{2,\beta}$ are two, possibly complex conjugate, eigenvalues with negative real part.
\end{enumerate}
\end{lemma}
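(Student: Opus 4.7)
The plan is to first enumerate the constant non-negative solutions of \eqref{eqn model red} by direct algebra on the system $F(w_1,w_2,u)=0$, and then to study the spectrum of the associated constant-coefficient matrix $A_\beta$ at each of them. The third algebraic equation gives $u=0$ or $\lambda-\mu u-k(w_1+w_2)=0$, while the first two give, for $i=1,2$, either $w_i=0$ or $ku-\omega=\beta w_{3-i}$. Splitting on the vanishing components produces exactly the four families (a)--(d). The only non-trivial case is (d): when $w_1,w_2,u$ are all positive, subtracting the first two equations yields $\beta(w_1-w_2)=0$ and hence $w_1=w_2=:w$, after which $w$ and $u$ solve the linear $2\times2$ system $ku-\beta w=\omega$, $2kw+\mu u=\lambda$, whose unique solution produces the stated formulas.

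For stability, the key observation is that on constant solutions $A_\beta$ has constant coefficients and therefore commutes with $-\Delta$. Expanding any perturbation componentwise in the Neumann eigenbasis $\{\psi_h\}_{h\in\N}$ of $L^2(\Omega)$, with associated eigenvalues $0=\gamma_0<\gamma_1\leq\gamma_2\leq\cdots$, one obtains
\[
	\sigma(L_\beta)=\bigl\{\gamma_h-\mu:h\in\N,\ \mu\in\sigma(A_\beta)\bigr\}.
\]
Since $\gamma_h\geq0$, strong stability is therefore equivalent to $\sigma(A_\beta)\subset\{\Re z<0\}$, while the existence of a single $\mu\in\sigma(A_\beta)$ with $\Re(\mu)>0$ already forces strong instability through the mode $h=0$ (the corresponding eigenfunction of $L_\beta$ being the spatially constant vector lying along the eigenvector of $A_\beta$).

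The spectral computations in (a), (b), (c) are then short. In (a), $A_\beta=\mathrm{diag}(-\omega,-\omega,\lambda)$ has the positive eigenvalue $\lambda$. In (b), $A_\beta$ is block-triangular with upper diagonal block $\mathrm{diag}\bigl((k\lambda-\mu\omega)/\mu,(k\lambda-\mu\omega)/\mu\bigr)$ and lower $1\times1$ block $-\lambda$, and hypothesis (H) makes $(k\lambda-\mu\omega)/\mu$ strictly positive. In (c) we may use the $w_1\leftrightarrow w_2$ symmetry to fix $\tilde w_1=(\lambda k-\mu\omega)/k^2>0$, $w_2=0$, $u=\omega/k$; the middle row of $A_\beta-tI$ then contains the single nonzero entry $-\beta\tilde w_1-t$, and cofactor expansion along it factors the characteristic polynomial as
\[
	\det(A_\beta-tI)=-(\beta\tilde w_1+t)\Bigl(t^2+\tfrac{\mu\omega}{k}t+k\omega\tilde w_1\Bigr),
\]
whose three roots all have strictly negative real part (negative linear root, and a quadratic factor with negative trace and positive determinant).

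The substantive work is in (d). Exploiting again the $w_1\leftrightarrow w_2$ symmetry, a direct verification shows that $(1,-1,0)$ is an eigenvector of $A_\beta$ with eigenvalue $\beta w=\beta(\lambda k-\mu\omega)/(\mu\beta+2k^2)>0$, which produces the instability. Its complementary invariant subspace, spanned by $(1,1,0)$ and $(0,0,1)$, reduces $A_\beta$ to the $2\times2$ matrix
\[
	\begin{pmatrix}-\beta w&kw\\-2ku&-\mu u\end{pmatrix},
\]
whose trace $-\beta w-\mu u$ is negative and whose determinant $uw(\mu\beta+2k^2)$ is positive; this yields the two remaining eigenvalues $\gamma_{1,\beta},\gamma_{2,\beta}$, both with negative real part, as asserted. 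I expect the main obstacle to be precisely this last step: without the symmetry-adapted decomposition one is left with a cubic characteristic polynomial whose positive root and complex-conjugate pair are not immediately transparent.
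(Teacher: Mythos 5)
Your proof is correct and follows essentially the same route as the paper: enumerate the constant solutions by solving $F=0$, then read off stability from the spectrum of $A_\beta$ (which the paper likewise reduces to via the Neumann eigenbasis just before Lemma 2.6). The only cosmetic improvement is that in case (d) you identify the invariant splitting $\mathrm{span}\{(1,-1,0)\}\oplus\mathrm{span}\{(1,1,0),(0,0,1)\}$ explicitly, which transparently produces the factorization of the cubic that the paper states by ``direct inspection''.
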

Later we will prove that the solutions in Lemma \ref{lem matrix A} are the only solution of \eqref{eqn model red} when $\beta > 0$ is sufficiently small (compare with Proposition \ref{prop beta pos}).
\begin{proof}
The proof is a rather straightforward computation, but we include it here in order to glean from it an interpretation of the results.

The solution $(0,0,0)$ corresponds to the matrix
\[
	A_\beta = \left( \begin{array}{ccc}
              -\omega & 0  & 0  \\
              0 & -\omega & 0  \\
             0 & 0 & \lambda
          \end{array} \right)
\]
which is already in a diagonal form. The instability of this solution is caused by the eigenvalues $-\lambda < 0$ of $-A$, which corresponds to the constant eigenfunction $(0,0,1)$. As a result, in complete accordance with other biological models, it implies that a logistic growth law in the prey population is responsible for an (initial) exponential growth, uniform in all the domain $\Omega$, at least when the population is small. Let us observe that none of the spectral and stability properties of the trivial solution depends on the competition $\beta$.

Similar computations hold for the solution $(0, 0, \lambda/\mu)$, whose associated matrix is
\[
	A_\beta = \left( \begin{array}{ccc}
              \frac{\lambda k - \mu \omega}{\mu} & 0  & 0  \\
              0 & \frac{\lambda k - \mu \omega}{\mu} & 0  \\
             - \lambda k / \mu & - \lambda k / \mu & -\lambda
          \end{array} \right).
\]
Here the eigenvalues are $\frac{\lambda k - \mu \omega}{\mu}$ (of multiplicity 2) and $-\lambda$. The eigenspaces of the matrix is generated by the vectors
\[
	\left( \begin{array}{c}
              1\\ 0 \\ -\frac{\lambda k}{\lambda k - \mu \omega + \lambda \mu}
              \end{array} \right), \left( \begin{array}{c}
              0 \\ 1 \\ -\frac{\lambda k}{\lambda k - \mu \omega + \lambda \mu}
              \end{array} \right)  \quad \text{ and } \quad
              \left( \begin{array}{c}
              0\\ 0 \\ 1
              \end{array} \right)
\]
respectively.

To discuss the solutions of type (c), consider for example the solution $w_1 =(\lambda k - \mu \omega)/k^2$, $w_2 = 0$ and $u = \omega/k$. In this case the matrix $A_\beta$ is
\[
	A_\beta = \left( \begin{array}{ccc}
              0 & - \beta w_1  & k w_1   \\
              0 & - \beta w_1  & 0 \\
             -\omega & - \omega & - \mu \omega/k
          \end{array} \right).
\]
Thus, the spectrum of $A_\beta$ consists of
\[
	- \beta w_1,\; - \frac{\mu \omega/k \pm \sqrt{(\mu \omega/k)^2-4k \omega w_1}}{2},
\]
which implies strong stability of these solutions. As already observed in the previous section, this result in unchanged even when the two populations of predators have different parameters, as long as $\beta > 0$ (compare Proposition \ref{prop dock new}).

In the case of the constant solutions of type (d), recalling that here $w_1 = w_2$, the matrix $A_\beta$ reduces to
\[
	A_\beta = \left( \begin{array}{ccc}
              0 & - \beta w_1  & k w_1   \\
              -\beta w_1 & 0  &  k w_1  \\
             -k u & - k u & - \mu u
          \end{array} \right).
\]
By direct inspection, we see that $\beta w_1 \geq 0$ is an eigenvalue, implying in particular that these solutions are unstable for $\beta > 0$. Using this information, we can factorize the characteristic polynomial of $A_\beta$, yielding
\[
	\det(A-\gamma \id) = (\gamma - \beta w_1) \left[\gamma^2 + (\beta w_1 + \mu u) \gamma + (2k^2 u w_1 + \beta \mu u w_1)\right]
\]
and the spectrum of $A_\beta$ consists of
\[
	\beta w_1, \; - \frac{(\beta w_1 + \mu u) \pm \sqrt{(\beta w_1 + \mu u)^2-(2k^2 u w_1 + \beta \mu u w_1)}}{2},
\]
and this concludes the proof.
\end{proof}

The set of non-trivial constant solutions undergoes a transformation as $\beta$ changes from $\beta = 0$ to $\beta > 0$, see Lemma \ref{lem trivial branch of sol}. Moreover, the spectrum of the matrix $A_0$, computed on the linear set of solutions is given by
\[
	0, \; - \frac{\mu \omega/k \pm \sqrt{(\mu \omega/k)^2-4k \omega (w_1+w_2)}}{2}.
\]
The zero eigenvalue underlines the degeneracy of the constant solutions, as they form a linear subspace, while the other two strictly negative eigenvalues confirm that this set of solutions is stable with respect to perturbations that move away from this configuration, i.e.~non homogeneous perturbation (see Proposition \ref{prp conway}).

\begin{remark}\label{rem stab indip beta}
The stability of the solutions belonging to the classes $(a)$, $(b)$ and $(c)$ does not depend on $\beta$. More precisely, in the classes $(a)$ and $(b)$ the spectrum of $A_\beta$ is independent of $\beta$, while in the third case $(c)$ the spectrum is also contained in $\mathbb{C}^- := \{z\in\mathbb{C}: \Re(z)<0\}$.
\end{remark}

We can say more about constant solution, and in particular we have that if a component is constant, so are the other.

\begin{lemma}\label{lem u constant}
For a solution $(w_1,w_2,u)$ of \eqref{eqn model red}, if one component is constant, then also all the other components are constant.
\end{lemma}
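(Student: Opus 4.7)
The plan is to argue by cases on which component of $(w_1, w_2, u)$ is assumed constant; the symmetry $w_1 \leftrightarrow w_2$ in \eqref{eqn model red} reduces matters to the two cases ``$u$ constant'' and ``$w_1$ constant''.

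\textbf{Case 1 ($u \equiv c$).} The third equation of \eqref{eqn model red} collapses to $c(\lambda - \mu c - k(w_1+w_2)) \equiv 0$, so either $c = 0$ or $w_1 + w_2 \equiv S := (\lambda - \mu c)/k$ is constant. If $c = 0$, integrating the first equation over $\Omega$ with the Neumann condition gives
\[
0 = -\omega\int_\Omega w_1 - \beta \int_\Omega w_1 w_2,
\]
and since both terms on the right are non-positive, each vanishes, forcing $w_1 \equiv 0 \equiv w_2$. If $c > 0$ and $\beta > 0$, summing the first two equations gives $0 = -\Delta S = (-\omega + kc) S - 2\beta w_1 w_2$, so $w_1 w_2$ is also constant; at every $x \in \Omega$ the pair $(w_1(x), w_2(x))$ then consists of two non-negative roots of a fixed quadratic, $w_1$ takes at most two values, and continuity on the connected set $\Omega$ forces it to be constant, hence so is $w_2 = S - w_1$. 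The remaining case $\beta = 0$, $c > 0$ produces either $S = 0$ or $kc = \omega$; in the latter the first equation reduces to $-\Delta w_1 = 0$ with Neumann data, and $w_1, w_2$ are constant.

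\textbf{Case 2 ($w_1 \equiv c_1$).} If $c_1 = 0$, the system reduces to the classical two-species Lotka--Volterra system for $(w_2, u)$, whose only non-negative solutions are constant by Mimura's Theorem~1 \cite{Mimura} (already invoked in the proof of Lemma~\ref{lem trivial branch of sol}). If $c_1 > 0$ and $\beta > 0$, then $-\Delta w_1 = 0$ and the first equation of \eqref{eqn model red} force the identity
\[
u = \frac{\omega + \beta w_2}{k},
\]
which is affine in $w_2$. Substituting in the second equation yields $-\Delta w_2 = \beta w_2(w_2 - c_1)$, and by affinity $-\Delta u = (\beta^2/k)\,w_2(w_2 - c_1)$. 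Matching this with the third equation evaluated at the above expressions produces a polynomial identity $Q(w_2(x)) \equiv 0$; a short expansion shows that the coefficient of $w_2^2$ in $Q$ equals $(\beta/k)(\beta + \mu\beta/k + k)$, which is strictly positive for $\beta > 0$, so $Q$ is a nonzero quadratic. Therefore $w_2$ takes at most two values on $\Omega$ and is constant by continuity on the connected domain, after which $u$ is constant too. The case $\beta = 0$, $c_1 > 0$ reduces to Case~1, since then the first equation directly gives $u \equiv \omega/k$.

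\textbf{Main obstacle.} The delicate step is Case~2 with $c_1 > 0$: one must extract the algebraic compatibility between the second and third equations once $u$ is eliminated, and verify nondegeneracy of the resulting polynomial in $w_2$. The remaining work uses only integration, the maximum principle, and connectedness of $\Omega$.
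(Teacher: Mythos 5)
Your proof is correct and handles all cases. Case~1 ($u$ constant) runs essentially parallel to the paper's argument: both obtain $w_1 + w_2$ constant from the $u$-equation and a second symmetric constant from the summed $w_i$-equations (the paper records $w_1^2 + w_2^2$, you record $w_1 w_2$; the two are equivalent given the first), then conclude via the ``at most finitely many values on a connected domain'' argument --- you make that last step explicit where the paper merely asserts it. The genuine divergence is in Case~2 ($w_1 \equiv c_1 > 0$, $\beta > 0$). The paper solves the identity $\beta w_2 = ku - \omega$ for $w_2$, substitutes into the $u$-equation, obtains a scalar logistic PDE $-\Delta u = (A - Bu)u$ with $B = \mu + k^2/\beta > 0$, and invokes Lemma~\ref{lem max neumann}; this reduces Case~2 back to Case~1. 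You instead substitute $u = (\omega + \beta w_2)/k$ into the $w_2$-equation to get $-\Delta w_2 = \beta w_2(w_2 - c_1)$, push the affine relation through to compute $-\Delta u = (\beta^2/k)\, w_2(w_2 - c_1)$, equate with the third equation, and land on a nondegenerate quadratic constraint $Q(w_2) \equiv 0$; constancy of $w_2$ (and hence $u$) follows directly from connectedness. Both routes are valid. Yours is more computational but self-contained --- it bypasses the auxiliary scalar maximum principle (Lemma~\ref{lem max neumann}) and avoids the sign discussion on $A$ that the paper needs --- while the paper's is shorter once that lemma is available. The nondegeneracy check on the coefficient of $w_2^2$, which you correctly identify as the only delicate point, is the analogue of the paper's observation that $B > 0$.
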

\begin{proof}
The case for $\beta = 0$ is already considered in Lemma  \ref{lem trivial branch of sol}. Thus we can assume $\beta > 0$. We start by assuming that $u$ is constant. If $u$ is zero, we conclude directly by the maximum principle. Assuming that $u$ is a positive constant, from the equation in $u$, we find that necessarily
\[
	w_1 + w_2 = \frac{\lambda- \mu u}{k}
\]
is a non-negative constant. This yields in the equation for $w_i$, $i =1,2$:
\[
	\begin{cases}
		- \Delta w_i  = \left(- \omega + k u + \beta \frac{\mu}{k} u - \beta\frac{\lambda}{k} + \beta w_i\right) w_i \\
		\partial_\nu w_i = 0 &\text{ on $\partial \Omega$}.
	\end{cases}
\]
Summing up the two equations, we obtain moreover
\[
	w_1^2+w_2^2 = \frac{\lambda-\mu u}{k} \left(\frac{\lambda-\mu u}{k} - \frac{k u- \omega}{\beta} \right) \geq 0
\]
As a result, we have obtained the identities
\[
	w_1 + w_2 = a, \qquad w_1^2+w_2^2 = b
\]
for some non-negative constant $a$ and $b$, which directly implies that $w_1$ and $w_2$ are constant. Using this information, it is also possible to compute explicitly the solutions, and in particular we find $u = \omega/k$.

We now assume that $w_1$ is constant. From the equation in $w_1$ we find that $w_1 = 0$ or $\beta w_2 = k u - \omega$. The former case is equivalent to assuming $\beta = 0$. We then need only to address the latter. Substituting the previous identity in the equation in $u$ we find
\[
	\begin{cases}
		- \Delta u  = \left(A - B u \right) u &\text{ in $\Omega$}\\
		\partial_\nu u = 0 &\text{ on $\partial \Omega$}.
	\end{cases}
\]
where $A$ is a real constant and $B$ is a strictly positive constant. If $A \leq 0$ then $u$ is zero. On the other hand, if $A > 0$ by the maximum principle (see Lemma \ref{lem max neumann} below), we find that $u$ is again a constant, and we can conclude as above.
\end{proof}

Note that the arguments of the proof are only valid in the case of two predators $w_1$, $w_2$.
In the previous result, we made use of the following classical consequence of the maximum principle.
\begin{lemma}\label{lem max neumann}
Let $\Omega \subset \R^n$ a bounded smooth domain, $A$ and $B$ positive constants. If $u \in H^1(\Omega)$ is a non negative solution to
\[
	\begin{cases}
		- \Delta u = (A - B u) u &\text{ in $\Omega$}\\
		\partial_\nu u = 0 &\text{ on $\partial \Omega$}
	\end{cases}
\]
then $u \equiv 0$ or $u \equiv A/B$.
\end{lemma}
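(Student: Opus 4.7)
The plan is to combine standard elliptic regularity with the strong maximum principle and Hopf's boundary point lemma. First, note that by standard bootstrap arguments (the right-hand side is polynomial in $u$ and $\Omega$ is smooth), any non-negative $H^1$ solution is actually $\C^{2,\alpha}(\overline{\Omega})$ and, in particular, bounded. Rewriting the equation as $-\Delta u + (Bu - A) u = 0$, we then shift by a constant $K > 0$ chosen so large that $Bu - A + K \geq 0$ on $\overline{\Omega}$, and obtain
\[
	(-\Delta + Bu - A + K) u = K u \geq 0.
\]
This operator has a non-negative zero-order coefficient, so the strong maximum principle (interior) and Hopf's lemma (boundary, together with the Neumann condition) yield the following dichotomy: either $u \equiv 0$, or $u > 0$ throughout $\overline{\Omega}$.

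Assume now the second alternative and set $v := u - A/B \in \C^{2,\alpha}(\overline{\Omega})$, which satisfies Neumann conditions $\partial_\nu v = 0$. A direct computation using $-\Delta u = (A - Bu)u = -Bu v$ gives
\[
	(-\Delta + Bu)\, v = 0 \quad \text{in } \Omega,
\]
a linear elliptic equation whose zero-order coefficient $Bu$ is now \emph{strictly} positive on $\overline{\Omega}$.

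It remains to show that this forces $v \equiv 0$. Suppose $\max_{\overline{\Omega}} v \geq 0$. If the maximum is achieved at an interior point, the strong maximum principle for the operator $-\Delta + Bu$ with $Bu > 0$ implies $v$ is constant; plugging back into the equation $Bu \cdot v = 0$ and using $u > 0$ yields $v \equiv 0$. If instead the maximum is attained only on $\partial\Omega$, Hopf's boundary point lemma gives $\partial_\nu v > 0$ at that point, contradicting the Neumann condition. Hence $\max v \geq 0$ forces $v \equiv 0$. The same argument applied to $-v$ (which solves the same equation) shows that $\min v \leq 0$ also forces $v \equiv 0$. Since at least one of the two inequalities $\max v \geq 0$, $\min v \leq 0$ must hold, we conclude $v \equiv 0$, that is $u \equiv A/B$.

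No step is technically delicate; the only subtle point is to apply Hopf's lemma to the shifted operator in the dichotomy step and to the operator $-\Delta + Bu$ in the concluding step, both of which require the zero-order coefficient to be non-negative (resp.\ positive) and the Neumann condition to rule out boundary extrema. Since $\Omega$ is smooth and the coefficients are continuous, both applications are standard.
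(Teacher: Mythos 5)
The paper states this lemma without proof, describing it only as a ``classical consequence of the maximum principle,'' so there is no paper argument to compare against. Your proof is correct: the regularity upgrade, the shifted-operator dichotomy showing $u\equiv 0$ or $u>0$ on $\overline\Omega$, and the two-sided strong maximum principle/Hopf argument applied to $v=u-A/B$ with the strictly positive zero-order coefficient $Bu$ are all sound, and the Neumann condition is used correctly to exclude boundary extrema. A more economical route, equally classical and probably closer to what the authors had in mind, is the energy identity obtained by testing the equation with $v=u-A/B$ (once $u$ is known to be bounded, which follows directly by testing with $(u-A/B)^+$): the Neumann condition kills the boundary term and, since $A-Bu=-Bv$,
\[
  \int_\Omega |\nabla u|^2 \;=\; \int_\Omega (A-Bu)\,u\,v \;=\; -B\int_\Omega u\,v^2 \;\leq\; 0,
\]
forcing $\nabla u\equiv 0$; then $u$ is a constant $c\geq 0$ with $(A-Bc)c=0$, i.e.\ $c=0$ or $c=A/B$. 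This avoids Hopf's lemma and the pointwise dichotomy altogether, but your version is perfectly valid.
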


Here, we are mostly interested in solutions which are not homogeneous in space (i.e., non constant solutions). We will derive their existence through bifurcation arguments. 

First, we introduce some notation.

\begin{definition}
We denote with $\mathscr{P} \subset \R^+ \times \mathcal{F}(\Omega)$ the set of all solutions $(\beta, w_{1,\beta},w_{2,\beta}, u_{\beta})$ of \eqref{eqn model red} with competition parameter $\beta >0$ such that all of its components are strictly positive. Let also $\mathscr{S}_0$ stand for the set of constant solutions $(\beta, w_1,w_2,u)$ of the form
\[
	w_1 = w_2 = \frac{\lambda k - \mu \omega}{\mu \beta + 2 k^2}, u = \frac{\lambda \beta + 2 k \omega}{\mu \beta + 2 k^2}
\]
for all values of $\beta > 0$. Lastly, we let
\[
	\mathscr{S}_1 = \mathscr{P} \setminus \mathscr{S}_0
\]
and $\mathscr{S}_* = \overline{\mathscr{S}_1}$, where the closure is taken in the $\R \times \mathcal{F}(\Omega)$ topology.
\end{definition}

Observe that the solutions in $\mathscr{S}_0$ are parameterized in $\beta$. We start with the asymptotic analysis when $\beta \to \infty$ of the spectrum associated to solutions of type $\mathscr{S}_0$.
\begin{lemma}\label{lem eigen A on S}
Let $(w_1, w_2, u) \in \mathscr{S}_0$. The eigenvalues of $A_\beta$ behave like
\[
	\beta\frac{\lambda k - \mu \omega}{\mu \beta + 2 k^2} \sim \frac{\lambda k - \mu \omega}{\mu}, \quad \gamma_{1,\beta} \sim - \left(\lambda + \frac{\lambda k - \mu \omega}{\mu}\right), \quad \gamma_{2,\beta} \to 0^- \qquad \text{as $\beta \to \infty$.}
\]
As a consequence, the supremum of the spectrum of the matrix $A_\beta$ is described, in terms of $\beta$, by the curve
\[
	\beta \mapsto \beta\frac{\lambda k - \mu \omega}{\mu \beta + 2 k^2}.
\]
Moreover, this supremum of the spectrum is monotone increasing in $\beta$ and its limit as $\beta \to \infty$ can be made arbitrarily large by taking $\mu$ small accordingly. In particular, in the limit case $\mu = 0$, the spectrum is unbounded.

Lastly, the unstable direction of $A_\beta$ is spanned by the eigenvector $(1, -1, 0)$.
\end{lemma}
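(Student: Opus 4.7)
The plan is to leverage directly the algebraic information already compiled in Lemma \ref{lem matrix A}(d). There one showed that at a point of $\mathscr{S}_0$ the matrix $A_\beta$ has the explicit eigenvalue $\beta w_1(\beta) = \beta(\lambda k - \mu\omega)/(\mu\beta + 2k^2)$, and that the two remaining eigenvalues $\gamma_{1,\beta}, \gamma_{2,\beta}$ are roots of the quadratic $\gamma^2 + (\mu u + \beta w_1)\gamma + uw_1(\mu\beta + 2k^2) = 0$. A Taylor expansion gives $\beta w_1 \to (\lambda k - \mu\omega)/\mu$ as $\beta \to \infty$, and the key algebraic identity $w_1(\mu\beta + 2k^2) = \lambda k - \mu\omega$ simplifies the constant term of the quadratic to $u(\lambda k - \mu\omega)$. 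Combining this with $\mu u \to \lambda$, Vieta's formulas (or the explicit quadratic formula) deliver the asymptotic behavior of $\gamma_{1,\beta}$ and $\gamma_{2,\beta}$.

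The claim that the supremum of $\sigma(A_\beta)$ is realized by the curve $\beta \mapsto \beta w_1(\beta)$ follows from the fact, already noted in Lemma \ref{lem matrix A}(d), that $\Re(\gamma_{1,\beta}), \Re(\gamma_{2,\beta}) < 0$ for all $\beta > 0$, whereas $\beta w_1 > 0$. Monotonicity in $\beta$ of this supremum is a one-line derivative computation:
\[
	\frac{d}{d\beta}\left(\frac{\beta(\lambda k - \mu\omega)}{\mu\beta + 2k^2}\right) = \frac{2k^2(\lambda k - \mu\omega)}{(\mu\beta + 2k^2)^2},
\]
which is strictly positive by assumption (H). For the dependence on $\mu$: the limit value $(\lambda k - \mu\omega)/\mu$ diverges as $\mu \to 0^+$; and in the degenerate case $\mu = 0$, the explicit formula reduces to $\beta w_1(\beta) = \lambda\beta/(2k)$, which is itself unbounded in $\beta$.

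The final assertion on the eigenvector is a one-step linear-algebraic verification. At a solution of $\mathscr{S}_0$ the upper-left $2\times 2$ block of $A_\beta$ has vanishing diagonal entries (as was observed when deriving $\mathscr{S}_0$, namely $-\omega + ku - \beta w_1 = 0$ there) and off-diagonal entries $-\beta w_1$, while the last row of $A_\beta$ has equal predator columns $-ku$. A direct multiplication yields $A_\beta(1,-1,0)^\top = \beta w_1\,(1,-1,0)^\top$, which identifies $(1,-1,0)$ as the eigenvector associated with the unstable eigenvalue $\beta w_1$. No step of the argument is genuinely delicate; the only care required is in keeping track of the competing $\beta$-dependent terms when passing to the limit in the coefficients of the quadratic, so as to avoid spurious cancellations.
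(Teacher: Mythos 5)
Your plan is the natural one and, since the paper itself offers no proof for this lemma (it relies implicitly on the spectral computations carried out in Lemma~\ref{lem matrix A}(d)), it is also the only reasonable one. The verification of the eigenvector $(1,-1,0)$, the monotonicity computation, the $\mu\to 0^+$ and $\mu=0$ statements, and the observation that the supremum of the spectrum is realized by $\beta w_1(\beta)$ because the other two eigenvalues always have negative real part --- all of this is correct.

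However, there is a genuine gap in the asymptotic part, hidden in the sentence ``Vieta's formulas (or the explicit quadratic formula) deliver the asymptotic behavior of $\gamma_{1,\beta}$ and $\gamma_{2,\beta}$.'' If you actually carry out the computation you propose, it does \emph{not} deliver the asymptotics stated in the lemma; it contradicts them. Indeed, with the identity $w_1(\mu\beta+2k^2)=\lambda k-\mu\omega$ the quadratic becomes
\[
\gamma^2+\bigl(\beta w_1+\mu u\bigr)\gamma + u(\lambda k-\mu\omega)=0,
\]
and as $\beta\to\infty$ one has $\beta w_1\to(\lambda k-\mu\omega)/\mu=:p$ and $\mu u\to\lambda$, so the constant term tends to $\lambda p>0$ and the linear coefficient to $p+\lambda$. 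The limiting quadratic factors as $(\gamma+p)(\gamma+\lambda)$, so the two negative eigenvalues converge to $-p=-(\lambda k-\mu\omega)/\mu$ and to $-\lambda$, \emph{not} to $-(\lambda+p)$ and $0^-$. In particular $\gamma_{1,\beta}\gamma_{2,\beta}\to \lambda p>0$, which rules out any eigenvalue tending to zero under assumption (H). The quantity $-(\lambda+p)$ is the limit of the \emph{sum} $\gamma_{1,\beta}+\gamma_{2,\beta}$, not of either root individually. So either the lemma has a typo (its asymptotics are correct only for the sum, and the claim $\gamma_{2,\beta}\to 0^-$ is false), or the intended asymptotics should read $\gamma_{1,\beta}\to -(\lambda k-\mu\omega)/\mu$, $\gamma_{2,\beta}\to-\lambda$. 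Your write-up should make the Vieta computation explicit and, having done so, flag that the displayed asymptotics do not hold as stated; hand-waving at this step conceals the inconsistency rather than resolving it.
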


We can now derive a result concerning the existence of non constant solutions: our construction is implicit and uses the topological degree argument through a bifurcation analysis of the set of constant solutions. Let $0 = \gamma_0 < \gamma_1 \leq \gamma_2 \leq \dots$ denoted the unbounded sequence of eigenvalues of the Laplace operator with homogeneous Neumann boundary conditions and let $\{\psi_i\}$ be the corresponding eigenfunctions:
\begin{equation}\label{eqn eigen lap neu}
	\begin{cases}
		-\Delta \psi_i = \gamma_i \psi_i &\text{in $\Omega$}\\
		\partial_\nu \psi_i = 0 &\text{on $\partial \Omega$}.
	\end{cases}
\end{equation}
We define $n^* \in \N$ to be the largest index corresponding to an eigenvalue $\gamma_{n^*}$ such that 
\[
	\gamma_{n^*} < \frac{\lambda k - \mu \omega}{\mu}.
\]
We assume in the following that $n^* \geq 1$. Let also $\beta_n > 0$ be defined by
\[
	\beta_n \frac{\lambda k - \mu \omega}{\mu \beta_n + 2 k^2} = \gamma_n.
\]
Observe that $n^*$ can be made as large as desired by taking $\mu$ small accordingly.
\begin{theorem}\label{thm bif}
For any $1 \leq n \leq n^*$, if $\gamma_n$ is an eigenvalue of odd multiplicity, then $(\beta_n, w_{1,n}, w_{2,n}, u_n)$ is a bifurcation point from the branch of solutions $\mathscr{S}_0$ into non constant solutions $\mathscr{S}_*$. More precisely, there exists a maximal closed and connected subset $\mathscr{C}_n \subset \mathscr{S}_*$ of solutions of \eqref{eqn model red} such that $\mathscr{C}_n$ contains the point $(\beta_n, w_{1,n}, w_{2,n}, u_n)$ and either
\begin{itemize}
	\item $\mathscr{C}_n$ is unbounded in $\beta$, or
	\item $\mathscr{C}_n$ contains another point $(\beta_m, w_{1,m}, w_{2,m}, u_m)$ for a different value of $m$.
\end{itemize}

Aside from these bifurcation points emanating from the branch of solutions $\mathscr{S}_0$, the set $\mathscr{C}_n$ consists of solutions $(w_1, w_2, u)$ which are non constant.
\end{theorem}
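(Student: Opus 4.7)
The plan is to apply Rabinowitz's global bifurcation theorem in the formulation for compact perturbations of the identity. First I would translate the trivial branch to zero by setting $\mathbf{v} = (w_1,w_2,u) - (w_1^\beta,w_2^\beta,u^\beta)$, where the second triple denotes the point of $\mathscr{S}_0$ at parameter $\beta$. Inverting $-\Delta + \id$ on $\C^{0,\alpha}(\overline{\Omega},\R^3)$ with homogeneous Neumann boundary conditions (an isomorphism onto $\mathcal{F}(\Omega)$ with compact inverse) recasts \eqref{eqn model red} as a fixed-point problem
\[
\mathbf{v} = T_\beta \mathbf{v} + R(\beta, \mathbf{v}), \qquad T_\beta := (-\Delta + \id)^{-1}(A_\beta + \id),
\]
with $T_\beta$ linear compact on $\mathcal{F}(\Omega)$ and $R(\beta,\cdot) = o(\|\cdot\|)$ uniformly on bounded intervals in $\beta$. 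In these coordinates $\mathscr{S}_0$ becomes the trivial line $\{(\beta,0) : \beta>0\}$.

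The crucial step is the analysis of $\ker(\id - T_\beta) = \ker(-\Delta - A_\beta)$. Expanding $\boldsymbol{\varphi} = \sum_k \psi_k \mathbf{c}_k$ along the spectral resolution \eqref{eqn eigen lap neu}, the equation reduces to $A_\beta \mathbf{c}_k = \gamma_k \mathbf{c}_k$ for every $k$. By Lemma \ref{lem matrix A} the spectrum of $A_\beta$ on $\mathscr{S}_0$ consists of the non-negative eigenvalue $\beta\tfrac{\lambda k - \mu\omega}{\mu\beta + 2k^2}$ and two eigenvalues $\gamma_{1,\beta},\gamma_{2,\beta}$ with strictly negative real parts; since $\gamma_k \geq 0$, a non-trivial $\mathbf{c}_k$ can occur only if $\gamma_k = \beta\tfrac{\lambda k - \mu\omega}{\mu\beta + 2k^2}$, that is exactly when $\beta = \beta_n$ and $\gamma_k = \gamma_n$. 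Lemma \ref{lem eigen A on S} identifies the corresponding eigenvector with $(1,-1,0)$, so $\ker(\id - T_{\beta_n})$ is spanned by $\{\psi(1,-1,0) : -\Delta \psi = \gamma_n \psi,\ \partial_\nu\psi = 0\}$ and has dimension equal to the multiplicity of $\gamma_n$.

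Next I would track the variation of the Leray--Schauder index of $(\beta,0)$ across $\beta_n$. For $\beta$ outside the discrete set $\{\beta_m\}$, this index equals $(-1)^{\nu(\beta)}$, where $\nu(\beta)$ counts with algebraic multiplicity the eigenvalues of $T_\beta$ strictly greater than one. By Lemma \ref{lem eigen A on S} the curve $\beta \mapsto \beta\tfrac{\lambda k - \mu\omega}{\mu\beta + 2k^2}$ is strictly monotone and is the only branch of $\sigma(A_\beta)$ that can meet the positive reals $\gamma_n$; as it sweeps across $\gamma_n$, exactly the $T_\beta$-eigenvalue(s) associated with the $\gamma_n$-eigenspace cross $1$. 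Odd multiplicity of $\gamma_n$ therefore flips the parity of $\nu(\beta)$, producing the index jump needed. Rabinowitz's global theorem then supplies a maximal closed connected component $\mathscr{C}_n$ of the closure of non-trivial solutions through $(\beta_n, 0)$ obeying the stated dichotomy. The a priori bound \eqref{eqn upper bound 2} of Lemma \ref{lem reg ell} precludes unboundedness of $\mathscr{C}_n$ in $\mathcal{F}(\Omega)$ on bounded $\beta$-intervals, hence unboundedness can only come from $\beta \to +\infty$. The kernel computation of Step~2 guarantees that away from the $\beta_m$'s no point of $\mathscr{C}_n$ lies on the trivial line, which yields $\mathscr{C}_n \subset \mathscr{S}_*$ and the non-constancy of its generic points.

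The main obstacle will be the parity computation: one must verify that at $\beta = \beta_n$ no eigenvalue of $T_\beta$ other than those generated by the $(1,-1,0)$-direction simultaneously crosses $1$. This reduces to the uniform-in-$\beta$ control of the two \emph{stable} eigenvalues of $A_\beta$ furnished by Lemma \ref{lem eigen A on S}, which keeps them uniformly in the open left half-plane. A secondary technical point is to ensure that the continuum stays inside the positive cone so that $\mathscr{C}_n \subset \overline{\mathscr{P}\setminus \mathscr{S}_0}$: close to the bifurcation point each component is bounded below by a positive constant by continuity, and the strong maximum principle (Lemma \ref{lem reg ell}) prevents a component from touching zero along $\mathscr{C}_n$ without vanishing identically, which combined with Lemma \ref{lem u constant} would force the solution to lie on another branch disjoint from $\mathscr{C}_n$.
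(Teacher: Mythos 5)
Your proposal follows the same architecture as the paper's proof: translate the branch $\mathscr{S}_0$ to the origin, recast the system as a compact fixed-point problem with linear part $L(A_\beta+\id)$, compute the kernel via the spectral decomposition of the Neumann Laplacian combined with the spectrum of $A_\beta$ on $\mathscr{S}_0$, and invoke Rabinowitz's global theorem together with the a priori bound of Lemma \ref{lem reg ell} to convert ``unbounded in $\R\times\mathcal{F}(\Omega)$'' into ``unbounded in $\beta$.'' The kernel computation, and the observation that the two stable eigenvalues of $A_\beta$ stay uniformly in the left half-plane so that only the $(1,-1,0)$-direction contributes to crossings of $1$, are exactly the paper's; your explicit Leray--Schauder index parity discussion merely makes precise what the paper cites as the Krasnoselskii-type sufficient condition.

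The gap is in what you call the ``secondary technical point.'' You argue that the continuum remains in the open positive cone because, if a component touched zero, the strong maximum principle together with Lemma \ref{lem u constant} ``would force the solution to lie on another branch disjoint from $\mathscr{C}_n$.'' That inference is not available: $\mathscr{C}_n$ is a \emph{closed} connected set, so if a sequence of positive solutions in $\mathscr{C}_n$ converges (for instance) to a solution with $w_1\equiv 0$, the limit solution \emph{belongs} to $\mathscr{C}_n$; there is no disjoint branch to relegate it to. The candidate limit points are precisely the constant solutions $(0,0,0)$, $(0,0,\lambda/\mu)$ and the semi-trivial solutions of type $(c)$ from Lemma \ref{lem matrix A}, and these must be actively excluded. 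This is why the paper's proof is not self-contained at this step: it states ``We shall exclude these possibilities in the following results,'' and then devotes Lemmas \ref{lem 000 isolated} and \ref{lem 001 isolated} to showing that $\R\times(0,0,0)$ and $\R\times(0,0,\lambda/\mu)$ are isolated in $\mathscr{P}$ for bounded $\beta$ (by a blow-up / eigenfunction sign-change argument), and relies on the strong linear stability of the type-$(c)$ solutions to make them locally isolated. Your proposal needs an isolation argument of that type in place of the maximum-principle heuristic.
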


\begin{remark}\label{rem bif from 0}
One could wonder what happens for the eigenvalue $\gamma_0 = 0$. Actually, this is already contained in the previous remarks: indeed, $\gamma_0$ corresponds to the value $\beta = 0$, and we have already observed in Lemma \ref{lem trivial branch of sol} that in this situation there exists a one dimensional subspace of constant solutions emanating from this point. Thus it is also a bifurcation point from $\mathscr{S}_0$ (in such case, the branch is explicit and the solutions are constant). This point is particular and indeed we do not take it into account in the statement of Theorem \ref{thm bif}.
\end{remark}

\begin{remark}\label{rem bif sim}
If one assumes some symmetry properties for the domain $\Omega$, one can then also give a more detailed description of the branches in Theorem \ref{thm bif}. In particular we can show that the symmetries of the eigenfunctions are preserved along a global branch of solutions, see for instance \cite{Ruelle,Sattinger}.
\end{remark}

\begin{proof}
The theorem follows from the classical bifurcation theorem of Rabinowitz, see \cite{Rabinowitz_JFA, RabBere}. For $\beta > 0$ and a corresponding nontrivial constant solution $(w_1,w_2,u)$ with $w_1 = w_2$, we look for a new solution of the form $(w_1 + \varphi_1, w_2 + \varphi_2, u + \varphi)$, for small perturbations $\boldsymbol{\varphi} = (\varphi_1, \varphi_2, \varphi) \in \mathcal{F}(\Omega)$. Inserting this ansatz  in the system \eqref{eqn model red} we obtain
\begin{equation}\label{eqn rem bif}
		-\Delta \boldsymbol{\varphi} = A_{\beta}  \boldsymbol{\varphi} +  \left( \begin{array}{c}
	              k \varphi_1 \varphi - \beta \varphi_1 \varphi_2\\
        		      k \varphi_2 \varphi - \beta \varphi_1 \varphi_2\\
		      -\mu \varphi^2 - k (\varphi_1+\varphi_2)\varphi
	          \end{array} \right) = A_\beta \boldsymbol{\varphi} + H(\beta,\boldsymbol{\varphi}) \qquad \text{in $\Omega$}
\end{equation}
completed by homogeneous Neumann boundary conditions. Here the nonlinear functional $H: (\R,\mathcal{F}(\Omega)) \to \mathcal{F}(\Omega)$ is continuous and $\|H(\beta, \boldsymbol{\varphi})\|_{\mathcal{F}(\Omega)} \leq C \|\boldsymbol{\varphi} \|_{\mathcal{F}(\Omega)}^2$ for a constant $C>0$ that can be chosen uniformly on compact sets of $\beta \in [0, +\infty)$. Let us now introduce the operator $L \in \mathcal{K}(\mathcal{F}(\Omega);\mathcal{F}(\Omega))$ defined as the linear map such that for any $\mathbf{u}, \mathbf{f} \in \mathcal{F}(\Omega)$
\[
	\mathbf{u} = L \mathbf{f} \Leftrightarrow \begin{cases}
		-\Delta \mathbf{u} + \mathbf{u} = \mathbf{f}   &\text{in $\Omega$}\\
		\partial_\nu \mathbf{u} = 0 &\text{on $\Omega$}.
	\end{cases}
\]
We can rewrite the perturbed system as
\begin{equation}\label{eqn bif rab}
	\boldsymbol{\varphi} = (A_\beta + \id) L \boldsymbol{\varphi} + LH(\beta,\boldsymbol{\varphi}) =  (A_\beta + \id) L \boldsymbol{\varphi} + h(\beta,\boldsymbol{\varphi})
\end{equation}
where now $h : (\R,\mathcal{F}(\Omega)) \to \mathcal{F}(\Omega)$ is a compact operator. Furthermore, it is such that $\|h(\beta, \boldsymbol{\varphi})\|_{\mathcal{F}(\Omega)} \leq C \|\boldsymbol{\varphi} \|_{\mathcal{F}(\Omega)}^2$  with a constant $C>0$ that again can be chosen uniformly on compact sets of $\beta$. We are now in a position to apply the global bifurcation theorem of Rabinowitz \cite{Rabinowitz_JFA, RabBere}. Indeed, as $\beta$ varies, $(A_\beta + \id) L $ is a homotopy of compact operators. It is known that a sufficient condition for a value $\bar \beta$ to be a bifurcation point for the equation \eqref{eqn bif rab} is that the set of solutions to the linear equation
\[
	\boldsymbol{\varphi} = (A_{\bar \beta} + \id) L \boldsymbol{\varphi} 
\]
has odd dimension. This equation translates into the $3$-component system
\[
	\begin{cases}
		-\Delta \boldsymbol{\varphi} = A_{\bar \beta}  \boldsymbol{\varphi} &\text{in $\Omega$}\\
		\partial_\nu \boldsymbol{\varphi} = 0 &\text{on $\Omega$}.
	\end{cases}
\]
We have already studied the spectral properties of the matrix $A_{\bar \beta}$ in Lemma \ref{lem matrix A}. The matrix has a unique positive eigenvalue $\bar\beta (\lambda k - \mu \omega)/(\mu \bar\beta + 2 k^2)$ that correspond to the eigenvector $(-1,1,0)$. As a consequence, $(\gamma_i, \psi_i)$ is an eigenvalue-eigenvector couple of \eqref{eqn eigen lap neu} and $\bar\beta (\lambda k - \mu \omega)/(\mu \bar\beta + 2 k^2) = \gamma_i$ if and only if $\boldsymbol{\varphi} = (\psi_i, - \psi_i, 0)$ solves the previous system for the prescribed value of $\bar \beta$. In particular if $\gamma_i$ has odd multiplicity, then $\bar \beta$ is a bifurcation point in the sense of the theorem.

It remains to show that the continua $\mathscr{C}_n$ are either unbounded in $\beta$ or meet the set $\mathscr{S}_0$ in another bifurcation point: recalling the global bifurcation theorem of Rabinowitz \cite[Theorem 1.3]{Rabinowitz_JFA}, we already know that each continuum is either unbounded in $\R \times \mathcal{F}(\Omega)$ or touches the set $\mathscr{S}_0$ at an other bifurcation point. Hence the statement in the theorem is reduced to showing that if the continuum $\mathscr{C}_n$ is unbounded, then it must be unbounded in $\beta$. We recall that, by Lemma \ref{lem reg ell}, the non-negative solutions satisfy the inequalities 
\[
	w_1 \geq 0, \quad w_2 \geq 0, \quad 0 \leq u \leq \lambda/ \mu, \quad u + w_1 + w_2 \leq \frac{( \lambda + \omega)^2}{4 \mu \omega}
\]
and either all the inequalities are strict or the solution is constant. It follows that if $\beta$ is bounded on $\mathscr{C}_n$, there must exists on $\mathscr{C}_n$ a solution which is constant. In view of Lemma \ref{lem matrix A}, discarding the solutions on $\mathscr{S}_0$, the only possibilities are solutions which are either case $(a)$ and $(b)$ strongly unstable or case $(c)$ strongly stable. Recall that these properties do not depend on $\beta >0$. We shall exclude these possibilities in the following results.
\end{proof}

\begin{lemma}\label{lem 000 isolated}
The set of solutions $\R \times (0,0,0)$ is isolated in $\mathscr{P}$ for $\beta$ bounded.
\end{lemma}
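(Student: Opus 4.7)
My plan is a simple contradiction argument that exploits the Neumann boundary condition and the strict positivity of the prey reproduction rate $\lambda$. Suppose on the contrary that there exists a sequence $(\beta_n, w_{1,n}, w_{2,n}, u_n) \in \mathscr{P}$ with $(\beta_n)$ bounded and $(w_{1,n}, w_{2,n}, u_n) \to (0,0,0)$ in $\mathcal{F}(\Omega)$; I aim to derive a contradiction by integrating the equation for $u_n$ over $\Omega$.

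Since $\partial_\nu u_n = 0$ on $\partial \Omega$, the divergence theorem applied to the third equation in \eqref{eqn model red} yields
\[
    0 \;=\; \int_\Omega \left(\lambda - \mu u_n - k(w_{1,n} + w_{2,n})\right) u_n \, dx.
\]
By assumption $u_n > 0$ in $\overline{\Omega}$, so $\int_\Omega u_n \, dx > 0$. On the other hand, the uniform convergence $u_n, w_{1,n}, w_{2,n} \to 0$ in $L^\infty(\Omega)$, which follows from the $\mathcal{C}^{2,\alpha}$ convergence, implies that the parenthesized factor converges uniformly on $\overline{\Omega}$ to $\lambda > 0$. For $n$ sufficiently large the whole integrand is therefore strictly positive on $\Omega$, contradicting the vanishing of the integral.

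The bound on $\beta$ is not actually used in this argument: what is truly exploited is the instability of the trivial equilibrium observed in case $(a)$ of Lemma \ref{lem matrix A}, namely that the prey density cannot remain arbitrarily small near $(0,0,0)$ because of its logistic growth at rate $\lambda > 0$. For this reason I expect no serious obstacle; the proof should be essentially the two lines above, with the integration identity being the only computation. An equivalent way to package the same idea would be to observe that $u_n$ is a positive Neumann eigenfunction for the operator $-\Delta - (\lambda - \mu u_n - k(w_{1,n}+w_{2,n}))$ with eigenvalue $0$, and then invoke continuity of the principal eigenvalue under uniform convergence of the potential to see that this eigenvalue would have to approach $-\lambda\ne 0$, a contradiction.
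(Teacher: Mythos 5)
Your argument is correct, and it takes a genuinely different and more elementary route than the paper's. The paper proceeds by a linearized bifurcation-type analysis: it writes the equation near $(0,0,0)$ as $-\Delta \mathbf{v}_n = A_{0,\beta}\mathbf{v}_n + H_0(\beta_n,\mathbf{v}_n)$ with $H_0$ quadratically small, deduces that the normalized solutions must align along an eigenfunction $\psi_i$ of the Neumann Laplacian with $\gamma_i = \lambda > 0$ (so $i \geq 1$), and then uses the sign change of $\psi_i$ to contradict non-negativity of $u_n$. Your proof bypasses the spectral decomposition entirely: integrating the $u$-equation over $\Omega$ against the constant $1$ and using the Neumann condition gives $0 = \int_\Omega (\lambda - \mu u_n - k(w_{1,n}+w_{2,n}))\,u_n$, and since $u_n > 0$ and the bracket converges uniformly to $\lambda > 0$, the integral is strictly positive for large $n$ --- a contradiction. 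Two points in favor of your version are worth noting. First, it is shorter and requires no knowledge of the linearization $A_{0,\beta}$ or whether $\lambda$ happens to coincide with a Neumann eigenvalue; the paper's phrase "there exists an eigenpair such that $\lambda = \gamma_i$" tacitly handles the case where $\lambda$ is not an eigenvalue as immediate, whereas your argument never has to split cases. Second, and as you observe, the integration step makes no use of $\beta_n$ at all, so you actually establish the conclusion uniformly in $\beta$ (bounded or not); the paper instead needs the later Lemma~\ref{lem asymp positive sol} to handle $\beta_n \to +\infty$. Your alternative packaging via continuity of the Neumann principal eigenvalue for $-\Delta - (\lambda - \mu u_n - k(w_{1,n}+w_{2,n}))$ is also sound and is the conceptual bridge between the two proofs.
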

\begin{proof}
We assume that there exists a sequence $(\beta_n, w_{1,n}, w_{2,n}, u_{n}) \in \mathscr{P} \setminus \R \times (0,0,0)$ of solutions of \eqref{eqn model red} such that $\beta_n > 0$ and, as $n\to+\infty$, we have $\beta_n \to \bar \beta \in [0,+\infty)$ and $\mathbf{v}_n := (w_{1,n}, w_{2,n}, u_{n}) \to (0,0,0)$ in $\mathcal{F}(\Omega)$. We can rewrite \eqref{eqn model red} as
\[
		-\Delta \mathbf{v}_n =  \left( \begin{array}{ccc}
	              -\omega & 0 & 0\\
        		 0 &-\omega & 0\\
		      0 & 0 & \lambda
	          \end{array} \right)  \mathbf{v}_n +  \left( \begin{array}{c}
	              k u_n w_{1,n} - \beta_n w_{1,n} w_{2,n}\\
        		 k w_{2,n} u_n - \beta_n w_{1,n} w_{2,n}\\
		      -\mu u_n^2 - k (w_{1,n}+w_{2,n}) u_{n}
	          \end{array} \right) = A_{0,\beta} \mathbf{v}_{n} + H_0(\beta_n,\mathbf{v}_{n}) 
\]
in $\Omega$, where  $H_0: (\R,\mathcal{F}(\Omega)) \to \mathcal{F}(\Omega)$ is continuous and $\|H_0(\beta, \mathbf{v})\|_{\mathcal{F}(\Omega)} \leq C \|\mathbf{v} \|_{\mathcal{F}(\Omega)}^2$ locally at $\beta = \bar \beta$. We follow a reasoning similar to that of Theorem \ref{thm bif}. We recall that the eigenvalues of the Laplacian with Neumann boundary conditions are non negative. Thus, by the stability analysis of Lemma \ref{lem matrix A}, for $n \to +\infty$ we have that there exists $\eps_n \to 0$ and an eigenpair  $(\gamma_i, \psi_i)$ of \eqref{eqn eigen lap neu} such that
\[
	\lambda = \gamma_i \qquad \text{and} \qquad \mathbf{v}_n = \eps_n  \left( \begin{array}{c} 0 \\ 0 \\ 1\end{array} \right) \psi_i  + o(\eps_n).
\]
Since $\lambda > 0$, it must be that the index $i$ is strictly positive. But then the eigenfunction $\psi_i$ changes sing in $\Omega$, and for $n$ sufficiently large, so does $u_n$. We reach the desired contradiction, as we are considering only solutions that are non negative in $\Omega$.
\end{proof}

In Lemma \ref{lem asymp positive sol} we will show that the same conclusion holds for $\beta_n \to +\infty$, that is, the sets $\R \times (0,0,0)$ and $\mathscr{P} \setminus \R \times (0,0,0)$ are at a positive distance. 

We now turn to the other line of constant solutions. 

\begin{lemma}\label{lem 001 isolated}
The set of solutions $\R \times (0,0,\lambda/\mu)$ is isolated in $\mathscr{P}$ for $\beta$ bounded.
\end{lemma}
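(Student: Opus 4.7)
The plan is to mimic the proof of Lemma \ref{lem 000 isolated}, now linearizing around the semi-trivial constant equilibrium $(0,0,\lambda/\mu)$ described in Lemma \ref{lem matrix A}(b). First, I would argue by contradiction, assuming a sequence $(\beta_n, w_{1,n}, w_{2,n}, u_n) \in \mathscr{P}$ distinct from this equilibrium with $\beta_n \to \bar\beta \in [0,+\infty)$ and $(w_{1,n}, w_{2,n}, u_n) \to (0,0,\lambda/\mu)$ in $\mathcal{F}(\Omega)$. Setting $\mathbf{v}_n := (w_{1,n}, w_{2,n}, u_n - \lambda/\mu) \to 0$ and Taylor-expanding the right-hand side of \eqref{eqn model red} around the constant state, the system rewrites as
\[
-\Delta \mathbf{v}_n = A_{\bar\beta}\, \mathbf{v}_n + H_1(\beta_n, \mathbf{v}_n) \qquad \text{in }\Omega,
\]
with Neumann boundary conditions, where $A_{\bar\beta}$ is the matrix from Lemma \ref{lem matrix A}(b) and $H_1 : \R \times \mathcal{F}(\Omega) \to \mathcal{F}(\Omega)$ is continuous with $\|H_1(\beta,\mathbf{v})\|_{\mathcal{F}(\Omega)} \leq C \|\mathbf{v}\|_{\mathcal{F}(\Omega)}^2$ locally at $\bar\beta$.

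Next, I would rescale by $\tilde{\mathbf{v}}_n := \mathbf{v}_n / \|\mathbf{v}_n\|_{\mathcal{F}(\Omega)}$, obtaining a sequence that solves a linear elliptic system with source term bounded in $\mathcal{F}(\Omega)$; standard Schauder estimates then give $\mathcal{F}(\Omega)$-compactness, so along a subsequence $\tilde{\mathbf{v}}_n \to \tilde{\mathbf{v}}$ with $\|\tilde{\mathbf{v}}\|_{\mathcal{F}(\Omega)} = 1$ solving
\[
-\Delta \tilde{\mathbf{v}} = A_{\bar\beta}\, \tilde{\mathbf{v}}, \qquad \partial_\nu \tilde{\mathbf{v}} = 0 \text{ on }\partial\Omega.
\]
Expanding $\tilde{\mathbf{v}} = \sum_i \mathbf{a}_i \psi_i$ in the Neumann eigenbasis \eqref{eqn eigen lap neu}, every non-zero $\mathbf{a}_i$ must be an eigenvector of $A_{\bar\beta}$ with eigenvalue $\gamma_i \geq 0$. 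By Lemma \ref{lem matrix A}(b), the spectrum of $A_{\bar\beta}$ consists of $(\lambda k - \mu\omega)/\mu$ (double, with eigenspace spanned by $(1,0,c)$ and $(0,1,c)$, where $c = -\lambda k/(\lambda k - \mu\omega + \lambda\mu)$) and the negative value $-\lambda$ (along $(0,0,1)$). The negative one is immediately ruled out, and hypothesis (H) makes $(\lambda k - \mu\omega)/\mu > 0$, so only indices $i \geq 1$ with $\gamma_i = (\lambda k - \mu\omega)/\mu$ contribute; writing such $\mathbf{a}_i = (\alpha_i, \beta_i, (\alpha_i + \beta_i)c)$, the limit becomes $\tilde{\mathbf{v}} = \sum_{i \geq 1} (\alpha_i, \beta_i, (\alpha_i+\beta_i)c)\, \psi_i$.

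To close the argument I would use the positivity of both $w_{1,n}$ and $w_{2,n}$. The first component of $\tilde{\mathbf{v}}$, namely $\sum_{i\geq 1} \alpha_i \psi_i$, is the $\mathcal{F}(\Omega)$-limit of $w_{1,n}/\|\mathbf{v}_n\|_{\mathcal{F}(\Omega)} \geq 0$, hence non-negative on $\Omega$; but every $\psi_i$ with $i \geq 1$ has zero mean, so $\int_\Omega \sum_i \alpha_i \psi_i = 0$, forcing the pointwise identity $\sum_i \alpha_i \psi_i \equiv 0$ and, by orthogonality, $\alpha_i = 0$ for all $i$. The symmetric argument on the second component yields $\beta_i = 0$, hence $\tilde{\mathbf{v}} \equiv 0$, contradicting $\|\tilde{\mathbf{v}}\|_{\mathcal{F}(\Omega)} = 1$. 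The main delicate step compared with Lemma \ref{lem 000 isolated} is the spectral one: the unstable subspace of $A_{\bar\beta}$ is now two-dimensional rather than one-dimensional, so ruling it out cannot be done by looking at a single component and requires the simultaneous non-negativity of both predator densities together with the zero-mean property of the higher Neumann eigenfunctions.
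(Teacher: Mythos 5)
Your proof is correct and follows essentially the same approach as the paper: linearize around $(0,0,\lambda/\mu)$, rescale, pass to a nontrivial limit of the linearized Neumann system, restrict via the spectrum of $A_{\bar\beta}$ to the positive eigenvalue $(\lambda k-\mu\omega)/\mu$, and then contradict positivity of the predator densities. You correctly and explicitly handle the two-dimensional unstable eigenspace of $A_{\bar\beta}$ and the possible multiplicity of the Neumann eigenvalue, a point the paper's one-line sketch (which writes the expansion along a single eigenvector $(1,0,c)\psi_i$) leaves somewhat implicit; moreover, your use of ``non-negative plus zero mean implies identically zero'' is just a tidy reformulation of the paper's ``$\psi_i$ changes sign for $i\ge 1$'' argument.
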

\begin{proof}
The proof is rather similar to that of Lemma \ref{lem 000 isolated}. We omit the details. We only point out that this time the conclusion is reached exploiting the expansion of the solutions as
\[
	\frac{\lambda k - \mu \omega}{\mu} = \gamma_i \qquad \text{and} \qquad \mathbf{v}_n = \eps_n  \left( \begin{array}{c} 1 \\ 0 \\ -\frac{\lambda k}{\lambda k - \mu \omega + \lambda \mu} \end{array} \right) \psi_i + o(\eps_n)
\]
and the assumption (H), that implies again $\gamma_i > 0$.
\end{proof}

Observe that here, in contrast with the case of $\R \times (0,0,0)$, the sets $\R \times (0,0,\lambda/\mu)$ and $\mathscr{P} \setminus \R \times (0,0,\lambda/\mu)$ are at distance 0. This is due to the presence of the set $\mathscr{S}_0$. 

We can strengthen the result in Theorem \ref{thm bif}, by showing that on all branches of non constant solutions, $\beta$ is bounded away from zero. More precisely, we have

\begin{proposition}\label{prop beta pos}
There exists $\bar \beta > 0$ such that the set of solution of \eqref{eqn model red} consists only of constant solutions if $\beta \in [0, \bar \beta)$. 
\end{proposition}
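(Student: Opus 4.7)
I argue by contradiction. Suppose the claim fails, so there is a sequence $\beta_n \to 0^+$ and corresponding non-constant non-negative solutions $\mathbf{v}_n = (w_{1,n}, w_{2,n}, u_n)$ of \eqref{eqn model red}. Since $\mathbf{v}_n$ is non-constant, each of its components must be strictly positive on $\overline{\Omega}$: indeed if any one of $w_{1,n}$, $w_{2,n}$, $u_n$ vanishes at an interior point, the strong maximum principle forces it to vanish identically, and the reduced subsystem is then either a scalar logistic equation or a planar Lotka--Volterra system to which Mimura's theorem (as used in Lemma \ref{lem trivial branch of sol}) applies, yielding only constant solutions, contradicting non-constancy. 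Hence $\mathbf{v}_n \in \mathscr{P}$.

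The uniform $L^\infty$ bounds of Lemma \ref{lem reg ell} together with Schauder estimates provide a uniform bound on $\mathbf{v}_n$ in $\C^{2,\alpha}(\overline\Omega)$; up to a subsequence $\mathbf{v}_n \to \mathbf{v}^*$ in $\C^{2}(\overline\Omega)$, with $\mathbf{v}^*$ a non-negative solution of \eqref{eqn model red} at $\beta = 0$. By Lemma \ref{lem trivial branch of sol}, $\mathbf{v}^*$ is constant and equals either $(0,0,0)$, or $(0,0,\lambda/\mu)$, or a point of the one-parameter family $(s\alpha, (1-s)\alpha, \omega/k)$ with $\alpha = (\lambda k - \mu\omega)/k^2$ and $s \in [0,1]$. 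The first two cases are ruled out by Lemma \ref{lem 000 isolated} and Lemma \ref{lem 001 isolated} respectively, which assert that these constant states lie at a positive distance from $\mathscr{P}$ for bounded $\beta$. Thus only the one-parameter family case remains to be treated.

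For this remaining case I linearize. Set $\bar{\mathbf{v}}_n := |\Omega|^{-1}\int_\Omega \mathbf{v}_n$, $\tilde{\mathbf{v}}_n := \mathbf{v}_n - \bar{\mathbf{v}}_n$ and, by non-constancy, $\eps_n := \|\tilde{\mathbf{v}}_n\|_{L^\infty(\Omega)} > 0$. Letting $\boldsymbol{\phi}_n := \tilde{\mathbf{v}}_n/\eps_n$, a Taylor expansion of the reaction term of \eqref{eqn model red} around $\bar{\mathbf{v}}_n$, combined with $\int_\Omega \tilde{\mathbf{v}}_n = 0$, gives
\[
	-\Delta \boldsymbol{\phi}_n = A_{\beta_n}(\bar{\mathbf{v}}_n)\boldsymbol{\phi}_n + R_n, \qquad \|R_n\|_{L^\infty(\Omega)} = O(\eps_n),
\]
with homogeneous Neumann boundary conditions, $\|\boldsymbol{\phi}_n\|_{L^\infty} = 1$, and $\int_\Omega \boldsymbol{\phi}_n = 0$. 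Since $\beta_n \to 0$, $\bar{\mathbf{v}}_n \to \mathbf{v}^*$ and $\eps_n \to 0$, elliptic regularity bounds $\boldsymbol{\phi}_n$ uniformly in $\C^{2,\alpha}(\overline\Omega)$; up to a subsequence $\boldsymbol{\phi}_n \to \boldsymbol{\phi}^*$ in $\C^{2}(\overline\Omega)$, where $\boldsymbol{\phi}^*$ satisfies $-\Delta \boldsymbol{\phi}^* = A_0(\mathbf{v}^*)\boldsymbol{\phi}^*$ with Neumann boundary conditions, $\|\boldsymbol{\phi}^*\|_{L^\infty} = 1$ and $\int_\Omega \boldsymbol{\phi}^* = 0$.

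Expanding $\boldsymbol{\phi}^* = \sum_{i \geq 0} \mathbf{a}_i \psi_i$ in the Neumann eigenbasis \eqref{eqn eigen lap neu} with $\mathbf{a}_i \in \R^3$ reduces the PDE to the algebraic eigenvalue problems $A_0(\mathbf{v}^*)\mathbf{a}_i = \gamma_i \mathbf{a}_i$, and the zero-mean condition forces $\mathbf{a}_0 = 0$. A direct computation of the characteristic polynomial of $A_0(\mathbf{v}^*)$ on the family yields
\[
	\det\bigl(A_0(\mathbf{v}^*) - \sigma \id\bigr) = -\sigma\bigl[\sigma^2 + (\mu\omega/k)\sigma + \omega k \alpha\bigr],
\]
which is independent of $s$, and whose nonzero roots have sum $-\mu\omega/k < 0$ and product $\omega k \alpha > 0$, hence strictly negative real parts. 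Since $\gamma_i > 0$ is real for every $i \geq 1$, it cannot coincide with any eigenvalue of $A_0(\mathbf{v}^*)$, so $\mathbf{a}_i = 0$ for all $i \geq 1$. Therefore $\boldsymbol{\phi}^* \equiv 0$, contradicting $\|\boldsymbol{\phi}^*\|_{L^\infty} = 1$. I expect the most delicate step to be controlling the Taylor remainder and transferring compactness to $\boldsymbol{\phi}^*$, because $A_0(\mathbf{v}^*)$ has the eigenvalue $0$ coming from the tangent direction to the one-parameter family, so one must check that the normalization by $\eps_n$ correctly separates the genuinely non-constant modes of the perturbation from this degenerate tangential constant mode.
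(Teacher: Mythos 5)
Your proof is correct, and it takes a genuinely different route from the paper's in two respects. First, the paper must show that the accumulation point of the sequence $\mathbf{v}_n$ is specifically the midpoint $\mathbf{v}_{1/2}$ of the segment of constant solutions; this is done by a Fredholm-alternative argument (testing the difference $w_{1,n}-w_{2,n}$ against the coupling term and deriving a sign contradiction). You avoid this step altogether by linearizing about the spatial average $\bar{\mathbf{v}}_n$ rather than about a point of $\mathscr{S}_0$, and the payoff is precisely the computation you record: the characteristic polynomial of $A_0(\mathbf{v}^*)$ is $-\sigma\bigl[\sigma^2 + (\mu\omega/k)\sigma + \omega k\alpha\bigr]$, independent of $s$, so the spectral argument runs uniformly over the whole segment and the location of the limit along it is irrelevant. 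Second, once the perturbation is isolated, the paper tests the equation against the zero-mean part $\hat{\boldsymbol{\varphi}}_n$ and derives a quantitative coercivity contradiction via Poincar\'e's inequality, whereas you perform a blow-up: you normalize by $\eps_n=\|\tilde{\mathbf{v}}_n\|_{L^\infty}$, pass to a $\C^2$ limit $\boldsymbol{\phi}^*$ of norm one, and eliminate it by matching Neumann eigenmodes against the spectrum of $A_0(\mathbf{v}^*)$. Both strategies are valid; yours is arguably cleaner since it trades the Fredholm step for a single $s$-independent determinant computation. As for the concern you flag at the end, it is handled correctly as written: the degenerate tangential mode is exactly the $\psi_0$-component valued in $\ker A_0(\mathbf{v}^*)=\mathrm{span}\{(1,-1,0)\}$, and subtracting $\bar{\mathbf{v}}_n$ rather than a point of $\mathscr{S}_0$ is precisely what forces $\int_\Omega\boldsymbol{\phi}^*=0$ and hence $\mathbf{a}_0=0$. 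One small point worth making explicit when writing this up: to see that $R_n=O(\eps_n)$ you need to observe that integrating the PDE over $\Omega$ (using the Neumann condition and $\int_\Omega\tilde{\mathbf{v}}_n=0$) gives $F_{\beta_n}(\bar{\mathbf{v}}_n)=O(\eps_n^2)$, so that the constant term $F_{\beta_n}(\bar{\mathbf{v}}_n)/\eps_n$ absorbed into $R_n$ is indeed $O(\eps_n)$; you allude to this but it deserves one sentence.
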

\begin{proof}
Let us assume that there exists a sequence $(\beta_n, w_{1,n}, w_{2,n}, u_{n})$ of non constant solutions of \eqref{eqn model red} such that $\beta_n > 0$ and $\beta_n \to 0$ as $n \to +\infty$. Up to striking out a subsequence, $\mathbf{v}_n :=(w_{1,n}, w_{2,n}, u_{n})$ converges to a constant solution $\mathbf{\bar v} := (\bar w_{1}, \bar w_{2}, \bar u)$ with $\beta = 0$. We have already classified these solutions in Lemma \ref{lem trivial branch of sol}. By the results in Lemmas \ref{lem 000 isolated} and \ref{lem 001 isolated}, we know that the sequence $(w_{1,n}, w_{2,n}, u_{n})$ must converge to a solution in the linear space of solution of Lemma \ref{lem trivial branch of sol}, that is the segment
\[
  s \in [0,1] \mapsto \mathbf{v}_s = \left(\frac{\lambda k - \mu \omega}{k^2} s, \frac{\lambda k - \mu \omega}{k^2} (1-s),\frac{\omega}{k}\right).
\]
We now prove that they must converge to the solution $\mathbf{v}_{1/2}$. Indeed, assume that there exists $s* \in [0,1] \setminus \{1/2\}$ such that $\mathbf{v}_n \to \mathbf{v}_{s^*}$. Without loss of generality, we can assume that $s^* > 1/2$. Thus, for $n$ sufficiently large, we have $w_{1,n} > w_{2,n}$ in $\Omega$. We define
\[
	g_n = -\beta_n w_{1,n} w_{2,n} < 0 \qquad \text{in $\Omega$}.
\]
For $n$ large, we deduce from \eqref{eqn model red} that $w_{1,n}$ and $w_{2,n}$ are both distinct solutions of the linear equation
\[
	\begin{cases}
		-\Delta w_{i,n} + (\omega - k u_n) w_{i,n} = g_n &\text{in $\Omega$}\\
		\partial_\nu w_{i,n} = 0 &\text{on $\partial \Omega$}.
	\end{cases}
\]
But then, by Fredholm's alternative it must be that the difference of any two distinct solutions is orthogonal to the zero order term $g_n$, that is
\[
	0 = \int_{\Omega} g_n (w_{1,n} - w_{2,n}) = -\beta_n \int_{\Omega} w_{1,n} w_{2,n} (w_{1,n} - w_{2,n}) < 0
\]
an obvious contradiction. 

As a result, we have $\mathbf{v}_n \to \mathbf{v}_{1/2}$ in $\mathcal{F}(\Omega)$. Thus, up to a subsequence, we can write, 
\[
 \mathbf{v}_n = \left(\frac{\lambda k - \mu \omega}{\mu \beta_n + 2k^2}, \frac{\lambda k - \mu \omega}{\mu \beta_n + 2k^2},\frac{\lambda \beta_n + 2 k \omega}{\mu \beta_n + 2 k^2} \right) + \boldsymbol{\varphi}_n
\]
where $\boldsymbol{\varphi}_n = (\varphi_{1,n}, \varphi_{2,n}, \varphi_n)$ is such that $\boldsymbol{\varphi}_n \to 0$ in $\mathcal{F}(\Omega)$ as $\beta_n \to 0$. We let
\[
  \boldsymbol{\varphi}_n = \overline{\boldsymbol{\varphi}}_n + \hat{\boldsymbol{\varphi}}_n \qquad \text{where} \quad  \overline{\boldsymbol{\varphi}}_n = \frac{1}{|\Omega|} \int_\Omega \boldsymbol{\varphi}_n
\]
Plugging these relations in system \eqref{eqn model red}, we find
\[
  \begin{cases}
    -\Delta \hat{\boldsymbol{\varphi}}_n = A_{\beta_n} \hat{\boldsymbol{\varphi}}_n + A_\beta \overline{\boldsymbol{\varphi}}_n + H(\beta_n,\boldsymbol{\varphi}_n) &\text{in $\Omega$}\\
    \partial \hat{\boldsymbol{\varphi}}_n = 0 &\text{on $\partial \Omega$}
  \end{cases}
\]
where we used notations similar to those in the proof of Theorem \ref{thm bif}. In particular, we have $\|H(\beta_n,\boldsymbol{\varphi}_n)\|_{\mathcal{F}(\Omega)} \leq C \|\boldsymbol{\varphi}_n \|_{\mathcal{F}(\Omega)}^2$ for a positive constant $C$. Let us now test the equation against $ \hat{\boldsymbol{\varphi}}_n$. We find
\begin{equation}\label{eqn spectral bif} 
  \int_\Omega |\nabla  \hat{\boldsymbol{\varphi}}_n|^2 - \int_\Omega \left<A_{\beta_n}  \hat{\boldsymbol{\varphi}}_n,  \hat{\boldsymbol{\varphi}}_n \right> = \int_\Omega H(\beta_n,\boldsymbol{\varphi}_n)  \hat{\boldsymbol{\varphi}}_n
\end{equation}
We now derive estimates for the two sides of the previous equation. First, recalling the definition of the function $H$ in \eqref{eqn rem bif}, we see that the right hand side of \eqref{eqn spectral bif}  is bounded from above by
\[
  \left|\int_\Omega H(\beta_n,\boldsymbol{\varphi}_n)  \hat{\boldsymbol{\varphi}}_n \right| \leq C \|\boldsymbol{\varphi}_n\|_{\mathcal{F}(\Omega)} \int_{\Omega} |\hat{\boldsymbol{\varphi}}_n|^2
\]
for some positive constant $C > 0$. On the other hand, by the results in Lemma \ref{lem matrix A} and \ref{lem eigen A on S} we find
\[
\begin{split}
   \int_\Omega \left<A_{\beta_n}  \hat{\boldsymbol{\varphi}}_n,  \hat{\boldsymbol{\varphi}}_n \right> =&  \beta_n\frac{\lambda k - \mu \omega}{\mu \beta_n + 2 k^2} \int_\Omega \left| (1,-1,0) \cdot  \hat{\boldsymbol{\varphi}}_n \right|^2 \\
   &+ \Re(\gamma_{1,n}) \int_{\Omega} \left| e_{1,n} \cdot  \hat{\boldsymbol{\varphi}}_n \right|^2 + \Re(\gamma_{2,n}) \int_{\Omega} \left| e_{2,n} \cdot  \hat{\boldsymbol{\varphi}}_n \right|^2
\end{split}
\]
where $ \gamma_{1,n}$ and $ \gamma_{1,n}$ are two complex conjugate eigenvalues of $A_{\beta_n}$ with strictly negative real part and $e_{1,n}$ and $e_{1,n}$ are the corresponding eigenvectors. More precisely the following holds
\[
 \beta_n\frac{\lambda k - \mu \omega}{\mu \beta_n + 2 k^2} = o_n(1) \quad \text{and} \quad \Re(\gamma_{i,n}) = -\frac{\mu \omega}{2k} + o_n(1) < 0.
\]
Since by construction the $\hat{\boldsymbol{\varphi}}_n$ have zero average on $\Omega$, by Poincar\'e's inequality, we find the following lower bound for the left hand side of \eqref{eqn spectral bif} 
\[
   \int_\Omega |\nabla  \hat{\boldsymbol{\varphi}}_n|^2 - \int_\Omega \left<A_{\beta_n}  \hat{\boldsymbol{\varphi}}_n,  \hat{\boldsymbol{\varphi}}_n \right> \geq C \int_{\Omega} |\hat{\boldsymbol{\varphi}}_n|^2.
\]
Here, the constant $C > 0$ can be chosen independent on $n$ for $n$ large enough. 

Combining the two bounds, we infer from \eqref{eqn spectral bif} that
\[
   \int_{\Omega} |\hat{\boldsymbol{\varphi}}_n|^2 \leq C \|\boldsymbol{\varphi}_n\|_{\mathcal{F}(\Omega)} \int_{\Omega} |\hat{\boldsymbol{\varphi}}_n|^2
\]
for $C >0$, a contradiction for $n$ sufficiently large.
\end{proof}

Thus we also conclude the proof of Theorem  \ref{thm bif}.

We can go further and make the conclusion of Theorem \ref{thm bif} more precise by noticing that we are in a position to apply the analytic bifurcation theory developed by Dancer in \cite{Dancer_71,Dancer_73} (see also \cite[Theorem 9.1.1]{BuffoniToland}). This approach yields the following.
\begin{theorem}\label{thm bif an}
Under the assumptions of Theorem \ref{thm bif}, for any continua of solutions $\mathscr{C}_n$ there exists a curve $\mathfrak{C}_n := \{(\beta(s), \mathbf{v}(s)) : \R \mapsto \R^+ \times \mathcal{F}(\Omega) \} \subset \mathscr{C}_n$, which contains the bifurcation point from which $\mathscr{C}_n$ emanate, such that
\begin{itemize}
	\item at any point, the curve $\mathfrak{C}_n$ can be locally reparametrized as an analytic curve;
	\item the set of possible secondary bifurcation points on $\mathfrak{C}_n$ has no accumulation points.
\end{itemize}
Moreover
\begin{itemize}
	\item either $\mathfrak{C}_n$ is a closed loop, and meets the set $\mathscr{S}_0$ in two distinct bifurcation points;
	\item or the set $\mathfrak{C}_n$ is unbounded in $(\bar \beta, +\infty) \times \mathcal{F}(\Omega)$, and more specifically
	\[
		\beta(s) \to +\infty \qquad\text{ as $s \to \infty$}.
	\]
\end{itemize}
\end{theorem}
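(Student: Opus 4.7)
The plan is to apply the analytic global bifurcation theorem of Dancer \cite{Dancer_71,Dancer_73}, in the form presented in \cite[Theorem 9.1.1]{BuffoniToland}, to the abstract fixed-point reformulation
\[
    \boldsymbol{\varphi} = (A_\beta + \id) L \boldsymbol{\varphi} + h(\beta,\boldsymbol{\varphi})
\]
already established in the proof of Theorem \ref{thm bif}. The first task is to verify the analyticity hypothesis: since the entries of $A_\beta$, evaluated along $\mathscr{S}_0$, depend rationally on $\beta$ with denominator $\mu\beta + 2k^2 > 0$, since the nonlinearity $H$ in \eqref{eqn rem bif} is a polynomial of degree two in $\boldsymbol{\varphi}$ with coefficients affine in $\beta$, and since $L$ is a fixed compact linear operator, the map $(\beta,\boldsymbol{\varphi}) \mapsto (A_\beta+\id)L\boldsymbol{\varphi} + h(\beta,\boldsymbol{\varphi})$ is real-analytic from $(0,+\infty) \times \mathcal{F}(\Omega)$ into $\mathcal{F}(\Omega)$ and compact in $\boldsymbol{\varphi}$, which is exactly the setting for analytic bifurcation.

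Next I would examine the linearization at $\beta = \beta_n$. The computation carried out for Theorem \ref{thm bif} identifies the kernel with the $\gamma_n$-eigenspace of the Neumann Laplacian via the embedding $\psi \mapsto (\psi,-\psi,0)$; under the odd-multiplicity assumption this kernel is odd-dimensional and the Leray–Schauder index jumps across $\beta_n$. These are precisely the ingredients required by the analytic global theorem, which then produces through $(\beta_n, w_{1,n},w_{2,n},u_n)$ a maximal connected curve $\mathfrak{C}_n \subset \mathscr{C}_n$ admitting local analytic reparameterizations and along which the set of points where the Fréchet derivative fails to be invertible is discrete (these are the possible secondary bifurcations). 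The global alternative in \cite[Theorem 9.1.1]{BuffoniToland} then forces $\mathfrak{C}_n$ either to close up into a loop meeting $\mathscr{S}_0$ at a second (odd-multiplicity) bifurcation, or to escape every bounded subset of $(0,+\infty) \times \mathcal{F}(\Omega)$. The fact that the loop, if it exists, cannot collapse to the single starting point is built into the analytic-curve construction via the discreteness of singular points and the non-triviality of the bifurcation.

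To upgrade unboundedness to the sharper statement $\beta(s) \to +\infty$, I would combine Lemma \ref{lem reg ell} with interior Schauder theory. Lemma \ref{lem reg ell} gives $0 \leq w_i, u \leq C$ independently of $\beta$, so for any slab $[\bar\beta,B] \times \mathcal{F}(\Omega)$ (with $\bar\beta>0$ supplied by Proposition \ref{prop beta pos}) the right-hand sides of \eqref{eqn model red} are bounded in $L^\infty(\Omega)$ by a constant depending only on $B$. Schauder estimates then give a uniform $\C^{2,\alpha}(\overline{\Omega})$ bound, so that $\mathscr{C}_n \cap ([\bar\beta,B] \times \mathcal{F}(\Omega))$ is relatively compact in $\mathcal{F}(\Omega)$. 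Hence unboundedness of $\mathfrak{C}_n$ in $\R \times \mathcal{F}(\Omega)$ forces $\beta$ to be unbounded along the curve; monotonicity of the arc parameter then yields $\beta(s) \to +\infty$.

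The main obstacle I anticipate is the verification of the analytic-theorem hypotheses when $\gamma_n$ has multiplicity strictly greater than one. In the simple-eigenvalue case the analytic implicit function theorem, applied to a Lyapunov–Schmidt reduction, produces $\mathfrak{C}_n$ at once and transversality is automatic. For an odd multiplicity $2m+1$ one has to invoke the full analytic-variety machinery of Dancer, choose a one-dimensional analytic branch from the reduced system, and carefully rule out the possibility that secondary bifurcation points accumulate along the continuation. Once this technical point is settled, the analyticity, compactness and a priori bounds arguments above are routine and the dichotomy in the statement follows directly.
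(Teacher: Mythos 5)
Your proposal matches the paper's approach exactly: the paper itself offers no proof of this theorem beyond the sentence ``we are in a position to apply the analytic bifurcation theory developed by Dancer in \cite{Dancer_71,Dancer_73} (see also \cite[Theorem 9.1.1]{BuffoniToland})'', and your sketch fills in precisely the verifications (polynomial nonlinearity, rational dependence on $\beta$, one-dimensional kernel via $\psi\mapsto(\psi,-\psi,0)$, $\C^{2,\alpha}$ a priori bounds from Lemma~\ref{lem reg ell}) that make that citation legitimate, including the correct caveat about odd multiplicity $>1$ requiring Dancer's analytic-variety machinery rather than the simple-eigenvalue form of Theorem~9.1.1. The only slight overstatement is attributing the ``loop meets $\mathscr{S}_0$ at a second bifurcation point'' conclusion directly to the abstract global alternative; that alternative merely says loop or unbounded, and the identification of the second meeting point with $\mathscr{S}_0$ requires re-running the exclusion arguments of Theorem~\ref{thm bif} (Lemmas~\ref{lem 000 isolated}, \ref{lem 001 isolated} and Proposition~\ref{prop beta pos}) along the curve, exactly as you invoke them for the $\beta(s)\to+\infty$ refinement.
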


\section{A priori estimates and strong competition singular limits}\label{sec unif est}

To shed more light on the solutions, we now derive new regularity estimates on the solutions. We are chiefly interested in estimates that are uniform in the competition parameter $\beta \geq 0$. Here we consider non negative solutions, that is solutions $\mathbf{v} = (w_1, \dots, w_N, u)$ with $w_i \geq 0$ for all $i$ and $u \geq 0$. Let us recall that, if $\beta$ is bounded, the regularity result of Lemma \ref{lem reg ell} applies also two the system of $N$ ($+1$) components. 

\begin{proposition}\label{prp asymptotic k}
Let $\Omega \subset \R^N$ be a smooth domain, and let $\beta$, $D$, $d_i$, $\omega_i$, $k_i$, $a_{ij} = a_{ji}$ for $1 \leq i,j \leq k$ be positive parameters. We consider a non negative solution $\mathbf{v} = (w_1, \dots, w_N, u) \in \mathcal{F}(\Omega)$ of the system
\begin{equation}\label{eqn model k}
	\begin{cases}
		- d_i \Delta w_i  = \left(- \omega_i + k_i u - \beta \sum_{j \neq i} a_{ij} w_j\right) w_i \\
		- D \Delta u = \left(\lambda - \mu u - \sum_{i=1}^N k_i w_i \right)u\\
		\partial_\nu w_i = \partial_\nu u = 0 &\text{ on $\partial \Omega$}.
	\end{cases}
\end{equation}
Then  all components of $\mathbf{v}$ are uniformly bounded in $L^\infty(\Omega)$ with respect to $\beta > 0$ and moreover there exists $C$ (independent of $\beta$ and $N$) such that
\[
	\| (w_1, \dots, w_N) \|_{Lip(\overline \Omega)} + \| u \|_{\C^{2,\alpha}(\overline \Omega)} \leq C
\]
If $\{\mathbf{v}_\beta\}_\beta$ is a family of non negative solutions as above, defined for $\beta \to +\infty$, then, up to subsequences, there exists $\mathbf{v} =  (w_1, \dots, w_N, u)$ with $(w_1, \dots, w_N) \in Lip(\overline{\Omega})$ and $u \in \C^{2,\alpha}(\overline \Omega)$ and
\begin{equation}\label{eqn limit segr N}
	(w_{1,\beta}, \dots, w_{N,\beta}) \to (w_1, \dots, w_N) \text{ in $\C^{0,\alpha}\cap H^1(\overline \Omega)$}, u_\beta \to u \text{ in $\C^{2,\alpha}(\overline \Omega)$}
\end{equation}
for any $\alpha \in (0,1)$. Any such limit satisfies in the sense of measures the following system of inequalities 
\begin{equation}\label{eqn segr model}
	\begin{dcases}
		- d_i \Delta w_i  \leq (- \omega_i + k_i u -\mu_i) w_i \\
		- \Delta \left( d_i w_i-\sum_{j \neq i} d_j w_j \right) \geq (- \omega_i + k_i u )w_i-\sum_{j \neq i} (- \omega_j + k_j u ) w_j &\text{in $\Omega$}\\
		- D \Delta u = \left(\lambda - \mu u - \sum_{i=1}^{k} k_i w_i \right)u\\
		\partial_\nu w_i = \partial_\nu u = 0 &\text{on $\partial \Omega$}.
	\end{dcases}
\end{equation}
Lastly, if the limit has two or more non zero components of $(w_1, \dots, w_N)$, then the subset $\{x \in \Omega: \sum_{i=1}^N w_i = 0\}$ is a rectifiable set of co-dimension 1, made of the union of a finite number of $\C^{1,\alpha}$ smooth sub-manifolds.
\end{proposition}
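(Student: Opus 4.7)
The proposition bundles together four assertions: a uniform $L^\infty$ bound, a uniform Lipschitz/$\mathcal{C}^{2,\alpha}$ bound, compactness plus a characterization of the limit as $\beta\to+\infty$, and the free boundary regularity of the nodal set in the segregated limit. I would address them in this order.

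For the $L^\infty$ bound, notice first that the equation for $u$ alone, together with the maximum principle, gives $u\le \lambda/\mu$ uniformly in $\beta$. For the predator components, the key observation is that by the symmetry $a_{ij}=a_{ji}$ one has $\sum_i w_i\sum_{j\ne i} a_{ij}w_j=2\sum_{i<j}a_{ij}w_iw_j\ge 0$, so a suitable positive linear combination $V=\alpha u+\sum_i \gamma_i w_i$ (with $\gamma_i$ chosen to normalize the $k_i$'s, e.g.\ $\gamma_i=d_i/(Dk_i)$) satisfies a differential inequality of logistic type with no $\beta$ dependence, whence $V$ is bounded above independently of $\beta$. This is the $N$-species analogue of the sum argument already used in Lemma \ref{lem reg ell}.

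Once $u$ is uniformly bounded, standard elliptic regularity applied to the $u$-equation yields the uniform $\mathcal{C}^{2,\alpha}$ estimate on $u$. The uniform Lipschitz estimate on $(w_1,\dots,w_N)$ is exactly the content of the now classical theory of strongly competing reaction--diffusion systems; I would invoke the results of Conti--Terracini--Verzini and Caffarelli--Karakhanyan--Lin, or their refinements in Dancer--Wang--Zhang. The proof is by contradiction and blow-up: if the Lipschitz seminorm blew up along a sequence $\beta_n\to\infty$, after rescaling at a point of maximal gradient one would extract a non-trivial entire limit profile solving a global segregated system, ruled out by an Alt--Caffarelli--Friedman-type monotonicity formula. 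Since the $u$-term enters the $w_i$ equations only as a bounded $\mathcal{C}^{2,\alpha}$ coefficient, it is harmless in this scheme.

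Armed with the uniform estimates, compactness along $\beta\to\infty$ is immediate: up to a subsequence $(w_{i,\beta})\to w_i$ in $\mathcal{C}^{0,\alpha}\cap H^1$ and $u_\beta\to u$ in $\mathcal{C}^{2,\alpha}$. Testing the $w_i$ equation against $w_i$ and integrating, one sees that $\beta\int_\Omega a_{ij} w_{i,\beta}w_{j,\beta}w_{i,\beta}$ remains bounded, which in the limit forces $w_iw_j\equiv 0$ for $i\ne j$, i.e.\ segregation. The inequalities \eqref{eqn segr model} then follow by passing to the limit: dropping the non-positive interaction term in the $i$-th equation gives the first inequality, and forming the symmetric combination $d_iw_i-\sum_{j\ne i}d_jw_j$ causes the $\beta$-interaction terms to cancel pairwise and yields the distributional inequality for this quantity; the equation for $u$ passes to the limit directly.

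The hard part is the last statement, the $\mathcal{C}^{1,\alpha}$ regularity of the interface when at least two species survive. I would follow the strategy of Caffarelli--Karakhanyan--Lin and its subsequent refinements by Tavares--Terracini. The combinations $d_iw_i-\sum_{j\ne i}d_jw_j$ play the role of a generalized harmonic map into a singular target and satisfy an almost-monotonicity Almgren-type formula, whose admissible frequencies at nodal points form a discrete set bounded below by $1$. Blow-up analysis classifies nodal points into regular points (where exactly two species meet with linear blow-up profile) and singular points (where three or more do). A flatness-implies-regularity argument in the spirit of De Silva or the direct Liouville-type classification at regular points identifies the interface locally as a $\mathcal{C}^{1,\alpha}$ hypersurface, while the singular stratum has Hausdorff dimension at most $n-2$ by dimension reduction. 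Finiteness of the number of smooth pieces inside $\Omega$ is obtained from a uniform lower density bound at regular points combined with the a priori $L^\infty$ bound on $\sum w_i$, via a covering argument. The main obstacle I anticipate is checking that the presence of the $u$-coupling, as well as the non-variational character of the system (the $\omega_i$ and $k_i$ may differ across species, so one cannot rely on a pure energy/Alt--Caffarelli--Friedman two-phase formula without adaptation), is harmless at the level of the monotonicity formulae; here the Lipschitz a priori bound is what saves the day, since it lets one absorb the lower order contributions coming from $u$ and from the $\omega_i w_i$ terms into error terms that do not alter the classification of blow-ups.
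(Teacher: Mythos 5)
Your plan coincides with the paper's own treatment, which is essentially to delegate the hard estimates to the segregation literature: the uniform H\"older bounds to Conti--Terracini--Verzini and Caffarelli--Karakhanyan--Lin, the optimal Lipschitz bound to Soave--Zilio, and the free-boundary regularity to \cite{CTV_indiana}, \cite{CaffKarLin} and \cite{TaTe}, with a complete self-contained proof promised in \cite{BerestyckiZilio_NN}. The paper gives no more detail than that, so your proposal is in fact a fuller roadmap than what appears in the text, and it is aligned with the cited references.

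Two small points deserve correction. First, your choice of weights $\gamma_i = d_i/(Dk_i)$ in $V=\alpha u+\sum_i\gamma_i w_i$ does not in general cancel the $u\,w_i$ cross-terms unless all $k_i$ are equal; the combination the paper's two-species Lemma \ref{lem reg ell} hints at is $V = Du + \sum_i d_i w_i$ (i.e.\ $\gamma_i\propto d_i$), for which $-\Delta V \le \lambda u - \mu u^2 - \sum_i \omega_i w_i$, and the bound then follows from $u\le\lambda/\mu$ and the maximum principle with Neumann boundary condition. Second, when you form $d_iw_i-\sum_{j\ne i}d_jw_j$ the $\beta$-interaction terms do not cancel pairwise: the terms involving $w_iw_j$ do cancel thanks to $a_{ij}=a_{ji}$, but a residual sum $\beta\sum_{j\ne i}\sum_{l\ne j, l\ne i}a_{jl}w_jw_l\ge 0$ remains on the left-hand side, which is exactly what produces the \emph{inequality} (rather than equality) for this combination in \eqref{eqn segr model}. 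Finally, note the $-\mu_i w_i$ appearing in the first line of \eqref{eqn segr model} is not present in \eqref{eqn model k} as stated; this looks like a carry-over from \eqref{eqn model} and should be read accordingly.
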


\begin{remark}
It can be shown that the limit in \eqref{eqn limit segr N} does not hold in the Lipschitz norm (the case $\alpha =1$), although the sequence is bounded in the Lipschitz norm. This follows from Hopf's Lemma applied to the limit functions on an regular portion of the free boundary $\{x \in \Omega: \sum_{i=1}^N w_i = 0\}$ in conjunction with the obvious fact that $\C^1(\Omega)$ is a closed subset of $Lip(\Omega)$.
\end{remark}

\begin{remark}
From the theorem we can deduce the following. Let $M$ be the number of non zero components of $(w_1, \dots, w_N)$. If $M = 0$, then either $u = 0$ or $u = \lambda / \mu$. If $M = 1$, by \cite[Theorem 1]{Mimura} the function $u$ and the unique non zero component of $(w_1, \dots, w_N)$ are positive constants and $\{x \in \Omega: \sum_{i=1}^N w_i = 0\} = \emptyset$.
\end{remark}

We shall not prove the result in all of its details, since it follows from already known ones. See for instance \cite{ContiTerraciniVerzini_AdvMat_2005} adn \cite{CaffKarLin} for the uniform estimates in H\"older spaces, \cite{SoaveZilio_ARMA} for the uniform estimate in the optimal Lipschitz norm,  \cite{CTV_indiana}, \cite{CaffKarLin} and \cite{TaTe} for the study of a closely related free-boundary problem. In any case, a complete proof for a much strong result can be found in \cite{BerestyckiZilio_NN}.

We observe that, in order to simplify the exposition, we have assumed $a_{ij} = 1$ for all $1 \leq i,j \leq k$: the results that follow can be generalized without difficulty. A much harder case arises when the competition matrix is not symmetric, that is when $a_{ij} \neq a_{ji}$ for some $i \neq j$: even though most of the results are also valid in this case, we will not consider it here, since we can only obtain a less complete description of the solutions. We refer the reader to \cite{spirals} to understand the new difficulties of this case.

We first use the uniform estimates to study more closely those bifurcation branches $\mathscr{C}_n$ which are unbounded in $\R^+ \times \mathcal{F}(\Omega)$. More generally, we will look here at any sequence of solutions $(\beta_n, w_{1,n}, w_{2,n}, u_{n})$ such that $\beta_n \to +\infty$. We recall that assumptions (H) holds, in particular $\lambda k > \mu \omega$. In the analysis of the singular limit, we use a blow-up technique first introduced in \cite{DancerDu} to study a similar situation.

We start with a key property.
\begin{lemma}\label{lem comparable}
There exists $M > 0$ such that
\[
	\frac{1}{M} \|w_{2,\beta}\|_{L^{\infty}(\Omega)}  \leq \|w_{1,\beta}\|_{L^{\infty}(\Omega)} \leq M  \|w_{2,\beta}\|_{L^{\infty}(\Omega)}
\]
for all $\mathbf{v}_\beta \in \mathscr{P}$ and $\beta$ sufficiently large.
\end{lemma}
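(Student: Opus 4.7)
The plan is to argue by contradiction. Suppose there is a sequence $\{\mathbf{v}_{\beta_n}\} \subset \mathscr{P}$ with $\beta_n \to \infty$ and, invoking the $w_1 \leftrightarrow w_2$ symmetry of the system, we may assume $\|w_{1,\beta_n}\|_{L^\infty(\Omega)}/\|w_{2,\beta_n}\|_{L^\infty(\Omega)} \to +\infty$. The uniform $L^\infty$ bound of Lemma \ref{lem reg ell} then forces $\|w_{2,\beta_n}\|_{L^\infty} \to 0$, and by Proposition \ref{prp asymptotic k}, along a further subsequence, $w_{i,\beta_n} \to w_i$ in $\C^{0,\alpha}(\overline{\Omega})$ and $u_{\beta_n} \to u^\ast$ in $\C^{2,\alpha}(\overline{\Omega})$, with $w_2 \equiv 0$.

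The key device is the difference $\phi_\beta := w_{1,\beta} - w_{2,\beta}$, which, thanks to the cancellation of the competitive term, satisfies the $\beta$-independent linear equation
\[
	-\Delta \phi_\beta = (-\omega + k u_\beta)\phi_\beta \text{ in } \Omega, \qquad \partial_\nu \phi_\beta = 0 \text{ on } \partial\Omega.
\]
Setting $\Phi_\beta := \phi_\beta/\|\phi_\beta\|_{L^\infty}$ (well defined, since $\phi_\beta \equiv 0$ gives a constant ratio $1$), Schauder estimates applied with coefficient uniformly bounded in $\C^{0,\alpha}$ yield a uniform $\C^{2,\alpha}$ bound, so up to a subsequence $\Phi_\beta \to \Phi$, with $\|\Phi\|_{L^\infty} = 1$, solving $-\Delta \Phi = (-\omega + k u^\ast)\Phi$ with Neumann boundary conditions. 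Moreover $\Phi \geq 0$: indeed $\phi_\beta \geq -\|w_{2,\beta}\|_{L^\infty}$ while $\|\phi_\beta\|_{L^\infty} \geq \|w_{1,\beta}\|_{L^\infty} - \|w_{2,\beta}\|_{L^\infty}$, and the ratio of these quantities tends to zero.

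I would then split according to whether $w_1 \equiv 0$ or not. If $w_1 \equiv 0$, both $w_i$ vanish in the limit and the limit equation $-\Delta u^\ast = (\lambda - \mu u^\ast)u^\ast$ with Neumann BC gives, by \cite{Mimura}, $u^\ast \equiv 0$ or $u^\ast \equiv \lambda/\mu$. In the first case $-\omega < 0$ lies outside the non-negative Neumann spectrum of $-\Delta$, hence $\Phi \equiv 0$; in the second, $k\lambda/\mu - \omega > 0$ by (H), and any corresponding non-trivial Neumann eigenfunction is orthogonal to constants and thus has zero integral, which combined with $\Phi \geq 0$ again forces $\Phi \equiv 0$. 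Either way this contradicts $\|\Phi\|_{L^\infty} = 1$.

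The remaining case $w_1 \not\equiv 0$ is the main obstacle, because $\Phi \equiv 1$ is then admissible and the argument above is inconclusive. Here the segregated system \eqref{eqn segr model} with only one non-trivial predator collapses to the classical one-predator Lotka-Volterra system for $(w_1, u^\ast)$, whose only positive solution is by \cite{Mimura} the constant equilibrium $w_1 \equiv (\lambda k - \mu\omega)/k^2$, $u^\ast \equiv \omega/k$. Consequently $w_{1,\beta_n} \geq w_1/2 > 0$ pointwise for $n$ large. To close the argument I would recognize that $w_{2,\beta_n} > 0$ is a principal Neumann eigenfunction of $-\Delta + V_{\beta_n}$ at eigenvalue $0$, where $V_{\beta_n} := \beta_n w_{1,\beta_n} + \omega - k u_{\beta_n}$; the Rayleigh quotient then yields
\[
	0 = \lambda_1(-\Delta + V_{\beta_n}) \geq \inf_\Omega V_{\beta_n} \geq \frac{1}{2}\beta_n w_1 - C,
\]
which blows up as $\beta_n \to \infty$, the desired contradiction. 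This final eigenvalue step, converting pointwise dominance of $w_{1,\beta}$ into a crushing effect on $w_{2,\beta}$, is the crux of the argument.
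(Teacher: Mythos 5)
Your proof is correct, but it is a genuinely different argument from the one in the paper. The paper renormalizes both components $\bar w_{i,n} = w_{i,n}/\|w_{i,n}\|_{L^\infty}$ and then splits on whether the quantity $\beta_n\|w_{2,n}\|_{L^\infty}$ stays bounded: in the bounded regime it extracts $W^{2,p}$ limits and forces $\bar w_{2,\infty}$ to solve a pure eigenvalue equation with a nonzero constant, while in the unbounded regime it works with $H^1$ bounds, shows both renormalized limits are nontrivial via an auxiliary majorant, and then derives the contradiction $\int\bar w_{1,\infty}\bar w_{2,\infty}=0$ by testing with $\varphi\equiv 1$. You instead exploit the exact cancellation of the competitive term in the difference $\phi_\beta = w_{1,\beta}-w_{2,\beta}$, which solves a $\beta$-independent linear equation; in the paper this observation surfaces only later (in Lemma \ref{lem asymp positive sol} and Theorem \ref{thm bif one}), not in this proof. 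Your case split is on whether the $\C^{0,\alpha}$-limit $w_1$ is trivial: when $w_1\equiv 0$ the nonnegative normalized limit $\Phi$ is a Neumann eigenfunction at eigenvalue $-\omega<0$ or $(\lambda k-\mu\omega)/\mu>0$, and in either case the sign/orthogonality constraint forces $\Phi\equiv 0$, contradicting $\|\Phi\|_{L^\infty}=1$; when $w_1\not\equiv 0$, Mimura's rigidity makes the limit constant, so $w_{1,\beta_n}$ is uniformly bounded below, and reading $w_{2,\beta_n}>0$ as the principal Neumann eigenfunction of $-\Delta + V_{\beta_n}$ at eigenvalue $0$ while $\inf V_{\beta_n}\to+\infty$ gives a clean contradiction. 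Your route is arguably more economical: it sidesteps the $\beta\|w_2\|$ dichotomy entirely and replaces the orthogonality/testing argument by a one-line Rayleigh bound, at the modest cost of invoking Proposition \ref{prp asymptotic k} and Mimura's theorem up front. One small wording slip: the parenthetical justification that $\Phi_\beta$ is well-defined because ``$\phi_\beta\equiv0$ gives a constant ratio $1$'' reads oddly; what you mean, correctly, is that $\phi_\beta\equiv 0$ would force $\|w_{1,\beta}\|_{L^\infty}=\|w_{2,\beta}\|_{L^\infty}$, which is excluded for $n$ large by the contradiction hypothesis.
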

\begin{proof}
We argue by contradiction, assuming that there exists a sequence of solutions in $\mathscr{P} $ that invalidates the conclusion. Without loss of generality, let us assume that $\|w_{1,n}\|_{L^{\infty}(\Omega)} \leq \|w_{2,n}\|_{L^{\infty}(\Omega)}$ and that the ratio $\|w_{1,n}\|_{L^{\infty}(\Omega)} / \|w_{2,n}\|_{L^{\infty}(\Omega)} \to 0$ as $\beta_n \to +\infty$. We introduce the renormalized functions
\[
	\bar w_{i,n} = \frac{w_{i,n}}{\|w_{i,n}\|_{L^{\infty}(\Omega)}} \qquad \text{for $i = 1, 2$}
\]
which are solutions to 
\[
	\begin{cases}
		- \Delta \bar w_{1,n} = - \omega \bar w_{1,n} + k \bar w_{1,n} u - \beta_n \|w_{2,n}\|_{L^{\infty}(\Omega)} \bar w_{1,n} \bar w_{2,n} &\text{ in $\Omega$}\\
		- \Delta \bar w_{2,n} = - \omega \bar w_{2,n} + k \bar w_{2,n} u - \beta_n \|w_{1,n}\|_{L^{\infty}(\Omega)} \bar w_{1,n} \bar w_{2,n} &\text{ in $\Omega$}\\
		- \Delta u_n = \lambda u_n - \mu u_n^2 - k (w_{1,n}+w_{2,n})u_n &\text{ in $\Omega$}\\
		\partial_\nu \bar w_{i,n} = \partial_\nu u = 0 &\text{ on $\partial \Omega$}
	\end{cases}
\]
We distinguish between two different cases.

\textbf{1)} $\beta_n \|w_{2,n}\|_{L^{\infty}(\Omega)}$ is bounded. In this case, all the terms in the equations are bounded uniformly with respect to $n$, and thus it is easy to see that the sequence $\bar w_{i,n}$, $u_n$ and also $w_{i,n}$, are uniformly bounded in $W^{2,p}(\Omega)$ for any $p < \infty$. Up to striking out a subsequence, we derive the strong convergence of the renormalized densities to some limit profile $(\bar w_{1,\infty}, \bar w_{2,\infty}, u_{\infty})$ with both $\bar w_{1,\infty}$ and $\bar w_{2,\infty}$ positive, while by assumption $w_{i,n} \to 0$ uniformly in $\Omega$. Moreover, by assumption we have that
\[
	 \beta_n \|w_{2,n}\|_{L^{\infty}(\Omega)} \to C \geq 0 \quad \text{while } \beta_n \|w_{1,n}\|_{L^{\infty}(\Omega)} \to 0.
\]
As a result, the limit profiles solve
\[
	\begin{cases}
		- \Delta \bar w_{1,\infty} = - \omega \bar w_{1,\infty} + k \bar w_{1,\infty} u -C \bar w_{1,\infty} \bar w_{2,\infty} &\text{ in $\Omega$}\\
		- \Delta \bar w_{2,\infty} = - \omega \bar w_{2,\infty} + k \bar w_{2,\infty} u  &\text{ in $\Omega$}\\
		- \Delta u_\infty = \lambda u_\infty - \mu u_\infty^2  &\text{ in $\Omega$}\\
		\partial_\nu \bar w_{i,\infty} = \partial_\nu u = 0 &\text{ on $\partial \Omega$}.
	\end{cases}
\]
By the maximum principle (see also Lemma \ref{lem max neumann}), we know that the equation for $u_\infty$ has only the constant solutions $u_{\infty} = 0$ or $\lambda/ \mu$. Inserting this information in the equation satisfied by $\bar w_{2,\infty}$ we see that
\[
	\begin{cases}
		- \Delta \bar w_{2,\infty} = - \omega \bar w_{2,\infty} + k \bar w_{2,\infty} u_\infty = C' w_{2,\infty}  &\text{ in $\Omega$}\\
		\partial_\nu \bar w_{2,\infty} = 0 &\text{ on $\partial \Omega$}.
	\end{cases}
\]
where the constant $C'$ is non zero by the assumption (H). It follows that necessarily $\bar w_{2,\infty} \equiv 0$, in contradiction with $\|\bar w_{2,\infty}\|_{L^{\infty}(\Omega)} = 1$.

\textbf{2)} $\beta_n \|w_{2,n}\|_{L^{\infty}(\Omega)} \to +\infty$, along a subsequence. We test the equation in $\bar w_{i,n}$ by $\bar w_{i,n}$ itself. Recalling that $\bar w_{i,n} \geq 0$ and that $u_n \leq \lambda / \mu$ we have
\[
	\int_{\Omega} | \nabla \bar w_{i,n}|^2 \leq k \frac{\lambda}{\mu} |\Omega|,
\]
where $|\Omega|$ is the measure of the set $\Omega$. Consequently, the $\bar w_{i,n}$ are bounded uniformly in $H^1(\Omega)$ and thus converge weakly to some limit $\bar w_{i,\infty} \in H^1(\Omega)$. Moreover, the compact embedding of $H^1(\Omega)$ in $L^2(\Omega)$ yields $\bar w_{i,n} \to \bar w_{i,\infty}$ strongly in $L^2(\Omega)$ and furthermore, since by construction $\|w_{i,n}\|_{L^{\infty}(\Omega)} = 1$, we have that $\bar w_{i,n} \to \bar w_{i,\infty}$ strongly in $L^p(\Omega)$ for any $p \geq 2$. Recalling that the equation for $u_n$ contains only uniformly bounded terms, up to a subsequence we have $u_n \to u_\infty$ in $W^{2,p}(\Omega)$ for any $p < \infty$. Let us show that each component of the limit configuration $(\bar w_{1,\infty}, \bar w_{2,\infty}, u_{\infty})$ is non zero. From the equations satisfied by $\bar w_{i,n}$ we know that
\[
	\begin{cases}
		- \Delta \bar w_{i,n} + \omega \bar w_{i,n} \leq k \bar w_{i,n} u_n &\text{ in $\Omega$}\\
		\partial_\nu \bar w_{i,n} = 0 &\text{ on $\partial \Omega$}.
	\end{cases}
\]
Now, letting $g_{i,n} \in H^1(\Omega)$ be the solution of
\[
	\begin{cases}
		- \Delta g_{i,n} + \omega g_{i,n} = k \bar w_{i,n} u_n &\text{ in $\Omega$}\\
		\partial_\nu g_{i,n} = 0 &\text{ on $\partial \Omega$}
	\end{cases}
\]
we have, from the previous discussion, that the sequence $\{ g_{i,n} \}_n$ is compact in $W^{2,p}(\Omega)$ for any $p > 1$ and, in particular, in $\C^{0,\alpha}(\Omega)$ for some $\alpha > 0$. On the other hand, the maximum principle yields $0 \leq \bar w_{i,n} \leq g_{i,n}$. We then assume, by way of contradiction, that either $\bar w_{i,\infty} = 0$ or $u_{\infty} = 0$. Then it follows that $g_{i,n} \to 0$ uniformly, that is, $\bar w_{i,n} \to 0$ uniformly. This is in contradiction with $\|w_{i,n}\|_{L^{\infty}(\Omega)} = 1$. 

Testing the equations in $\bar w_{1,n}$ by $\varphi \in H^1(\Omega)$, we get
\begin{equation}\label{eqn upper renorm}
	 \beta_n \|w_{2,n}\|_{L^{\infty}(\Omega)}  \int_{\Omega}\bar w_{1,n} \bar w_{2,n} \varphi = \int_{\Omega} ( k  u _n - \omega) \bar w_{1,n} \varphi - \int_{\Omega} \nabla w_{1,n} \cdot  \nabla \varphi \leq C
\end{equation}
so that, using our assumption
\[
	 \beta_n \|w_{1,n}\|_{L^{\infty}(\Omega)}  \int_{\Omega}\bar w_{1,n} \bar w_{2,n} \varphi =  \frac{\|w_{1,n}\|_{L^{\infty}(\Omega)} }{\|w_{2,n}\|_{L^{\infty}(\Omega)} } \cdot \beta_n \|w_{2,n}\|_{L^{\infty}(\Omega)}  \int_{\Omega}\bar w_{1,n} \bar w_{2,n} \varphi \to 0.
\]
As a result, $\bar w_{2,\infty}$ is a weak solution of the equation
\[
	\begin{cases}
		- \Delta \bar w_{2,\infty} = - \omega \bar w_{2,\infty} + k \bar w_{2,\infty} u_\infty  &\text{ in $\Omega$}\\
		\partial_\nu \bar w_{2,\infty} = 0 &\text{ on $\partial \Omega$}
	\end{cases}
\]
where $0 \leq u_\infty \leq \lambda / \mu$. By the maximum it follows that either $\bar w_{2,\infty} \equiv 0$ or $\bar w_{2,\infty}$ is bounded away from $0$. The former case was already excluded, thus the latter holds. But then equation \eqref{eqn upper renorm}, with $\varphi = 1$, yields
\[
	\beta_n \|w_{2,n}\|_{L^{\infty}(\Omega)} \cdot \int_{\Omega}\bar w_{1,n} \bar w_{2,n} \leq C \implies \int_{\Omega}\bar w_{1,\infty} \bar w_{2,\infty} = 0
\]
which implies $\bar w_{1,\infty} = 0$, in contradiction with the previous discussion.
\end{proof}

To push forward our analysis, in the remaining of this section we now impose a strengthening of assumption $(H)$. We require that $(\lambda k - \mu \omega)/k$ is not an eigenvalue of the Laplace operator with Neumann boundary conditions.

\begin{lemma}\label{lem asymp positive sol}
The set $\mathscr{P}$ is a pre-compact subset of $\C^{0,\alpha} \times \C^{0,\alpha} \times \C^{2,\alpha}(\overline{\Omega})$ for any $\alpha \in (0,1)$. Moreover any converging subsequence $(w_{1,n}, w_{2,n}, u_n) \to (w_{1,\infty}, w_{2, \infty}, u_\infty)$ with $\beta_n \to +\infty$ is such that 
\begin{itemize}
	\item either $(w_{1,\infty}, w_{2, \infty}, u_\infty)$ has all non zero components and, letting $V = w_{1,\infty} - w_{2,\infty}$, $V$ changes sign and $(V,u_\infty) \in \C^{2,\alpha}(\overline{\Omega})$ is a non-trivial solution of
	\[
	\begin{cases}
		- \Delta V = - \omega V + k V u &\text{ in $\Omega$}\\
		- \Delta u = \lambda u - \mu u^2 - k |V|u &\text{ in $\Omega$}\\
		\partial_\nu V = \partial_\nu u = 0 &\text{ on $\partial \Omega$,}
	\end{cases}
	\]
	\item or
	\[
		(\beta_n w_{1,n},\beta_n w_{2,n}, u_n) \sim \left(\frac{\lambda k - \mu \omega}{\mu}, \frac{\lambda k - \mu \omega}{\mu}, \frac{\lambda}{\mu}\right)
	\]
	as $n \to +\infty$ in $L^p(\Omega)$ for any $p < \infty$ and weakly in $H^1(\Omega)$.
\end{itemize}
\end{lemma}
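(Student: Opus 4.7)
The compactness of $\mathscr{P}$ in $\C^{0,\alpha}\times\C^{0,\alpha}\times\C^{2,\alpha}(\overline{\Omega})$ is immediate from the uniform Lipschitz and $\C^{2,\alpha}$ bounds of Proposition~\ref{prp asymptotic k} combined with Ascoli-Arzelà. By Lemma~\ref{lem comparable}, $\|w_{1,n}\|_\infty$ and $\|w_{2,n}\|_\infty$ are comparable, so along any convergent subsequence with $\beta_n\to\infty$ the two predator components either both survive in the limit or both vanish uniformly --- the two alternatives of the statement. For the first, I would exploit the symmetry of the competition: subtracting the two $w_i$-equations cancels the quadratic interactions $\beta_n w_{1,n}w_{2,n}$, so that $V_n:=w_{1,n}-w_{2,n}$ satisfies the \emph{linear} equation $-\Delta V_n=(ku_n-\omega)V_n$ with Neumann boundary conditions. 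Passing to the limit in the $\C^{0,\alpha}\times\C^{2,\alpha}$ topology gives the stated equation for $V$; the segregation from Proposition~\ref{prp asymptotic k} forces $w_{1,\infty}w_{2,\infty}\equiv 0$, hence $w_{1,\infty}=V^+$, $w_{2,\infty}=V^-$ and $w_{1,\infty}+w_{2,\infty}=|V|$, and non-triviality of both components forces $V$ to change sign. Inserting $|V|$ into the limiting $u$-equation gives the remaining identity.

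The second alternative is the heart of the proof. I would first rule out $u_\infty=0$: Proposition~\ref{prp asymptotic k} gives $-\Delta u_\infty=(\lambda-\mu u_\infty)u_\infty$ under Neumann conditions, so Lemma~\ref{lem max neumann} leaves only the constants $0$ and $\lambda/\mu$. If $u_n\to 0$ uniformly, testing the equation of $\bar w_{1,n}:=w_{1,n}/\|w_{1,n}\|_{L^2}$ against itself (dropping the non-positive competition contribution) gives
\[
    \int_\Omega|\nabla\bar w_{1,n}|^2 \leq \int_\Omega(ku_n-\omega)\bar w_{1,n}^2,
\]
whose right-hand side is strictly negative for $n$ large under (H), a contradiction. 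Hence $u_\infty=\lambda/\mu$, and the convergence is in $\C^{2,\alpha}$ by Proposition~\ref{prp asymptotic k}. Next I would symmetrize: the coefficient $(ku_n-\omega)$ in the $V_n$-equation converges uniformly to $(\lambda k-\mu\omega)/\mu$, a value the strengthened assumption excludes from the Neumann spectrum of $-\Delta$. If $V_n\not\equiv 0$ along a subsequence, $V_n/\|V_n\|_{L^2}$ would converge, by elliptic regularity, to a non-trivial Neumann eigenfunction at this forbidden eigenvalue, a contradiction. Thus $w_{1,n}\equiv w_{2,n}=:w_n$ eventually, and $\hat w_n:=\beta_n w_n$ satisfies
\[
    -\Delta\hat w_n = (ku_n-\omega-\hat w_n)\hat w_n, \qquad \partial_\nu\hat w_n=0 \text{ on } \partial\Omega.
\]
At a maximum point $x^*$ of $\hat w_n$ (either interior or boundary, since the Neumann condition preserves the standard inequality $-\Delta\hat w_n(x^*)\geq 0$) one has $\hat w_n(x^*)\leq ku_n(x^*)-\omega$, and similarly at a minimum. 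Combining these pointwise sandwich inequalities with the uniform convergence $u_n\to\lambda/\mu$ yields $\hat w_n\to(\lambda k-\mu\omega)/\mu$ uniformly, and elliptic regularity provides the uniform $H^1$ bound; the claimed $L^p$ and weak $H^1$ convergences follow at once.

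I expect the main obstacle to be the symmetrization step: absent the spectral exclusion, an asymmetric mode $V_n$ could persist at the right order in the limit, breaking the componentwise maximum principle bound (at the minimum of $\hat w_{1,n}$ the lower bound $\hat w_{1,n}\geq ku_n-\omega-\hat w_{2,n}$ involves the other component, which cannot be controlled away from the target constant without the symmetry). Conversely, once symmetrization is in place the remainder is a direct semilinear maximum principle argument. The exclusion of the resonant eigenvalue, applied via a Fredholm-type alternative to the \emph{linear} equation obtained by the $\beta_n$-cancelling subtraction, is therefore the structural input on which the second alternative really rests.
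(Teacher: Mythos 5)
Your proof is correct and reaches the stated conclusions, but the route you take through the second alternative differs genuinely from the paper's, and in fact yields a stronger result. The paper normalizes the individual components $\bar w_{i,n}:=w_{i,n}/\|w_{1,n}\|_{L^\infty}$ (Lemma~\ref{lem comparable} keeps these bounded), shows $\bar w_{1,n}-\bar w_{2,n}\to 0$ in $\C^{2,\alpha}$ because the limit would otherwise be a Neumann eigenfunction at the excluded value, then identifies the constant $\beta_n\|w_{1,n}\|_{L^\infty}\to(\lambda k-\mu\omega)/\mu$ by testing the equation and passing to the limit. You instead normalize the \emph{difference} $V_n$ by its $L^2$ norm: the identity $\|\hat V_n\|_{L^2}=1$ is preserved in the limit (unlike the $\C^{2,\alpha}$ convergence to $0$ the paper obtains), so the spectral exclusion delivers the stronger conclusion $V_n\equiv 0$, i.e.\ $w_{1,n}\equiv w_{2,n}$, for $n$ large — something the paper proves only in dimension one, via nodal counting in Lemma~\ref{lem asymp positive 1d}. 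Once that is in hand, your pointwise maximum-principle sandwich on $\hat w_n=\beta_n w_n$, combined with the $\C^{2,\alpha}$ convergence $u_n\to\lambda/\mu$ from Proposition~\ref{prp asymptotic k}, gives uniform (not merely $L^p$) convergence of $\beta_n w_{i,n}$; the $W^{2,p}$ a priori bound then supplies the $H^1$ statement. This is shorter and arguably cleaner than the paper's energy-plus-limit-equation identification. Two minor remarks: the paper's wording of the strengthened assumption contains a typo ($(\lambda k-\mu\omega)/k$ should read $(\lambda k-\mu\omega)/\mu$, which is what both you and the paper's own proof actually use); and in your exclusion of $u_\infty=0$, negativity of $ku_n-\omega$ for $n$ large follows from $\omega>0$ alone, not from (H) — citing (H) there is a small misattribution that does not affect the argument.
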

\begin{proof}
The compactness in strong topology of the sequence of solutions was already established in Proposition \ref{prp asymptotic k}. We are left with the study of the asymptotic profiles. First of all we exclude the case $u_n \to 0$ (which would hold uniformly in $\Omega$ by the compactness properties). Indeed, in this situation we would have
\[
	\begin{cases}
		- \Delta w_{i,n} = - \omega w_{i,n} + k w_{i,n} u_n - \beta_n w_{i,n} w_{j,n} \leq - \frac{\omega}{2} w_{i,n} &\text{ in $\Omega$}\\
		\partial_\nu w_{i,n} = 0 &\text{ on $\partial \Omega$}
	\end{cases}	
\]
for $n$ sufficiently large, which implies that necessarily $w_{i,n} \equiv 0$ for $n$ large, in contradiction with the assumptions.

Let us now assume that
\[
	w_{1,n}, w_{2,n} \to 0 \qquad \text{uniformly in $\Omega$}.
\]
Passing to the limit in the equation in $u_n$, we see that $u_\infty$ satisfies
\[
	\begin{cases}
		- \Delta u_\infty = \lambda u_\infty - \mu u_\infty^2 &\text{ in $\Omega$}\\
		\partial_\nu u_\infty = 0 &\text{ on $\partial \Omega$}
	\end{cases}
\]
which implies that $u_n \to \lambda / \mu$ in $\C^{2,\alpha}(\Omega)$ (recall that we have already excluded the case $u_n \to 0$). We introduce the renormalized functions
\[
	\bar w_{i,n} := \frac{w_{i,n}}{\| w_{1,n}\|_{L^\infty(\Omega)}}
\]
which are solutions to
\[
	\begin{cases}
		- \Delta \bar w_{1,n} = - \omega \bar w_{1,n} + k \bar w_{1,n} u_n - \beta_n \|w_{1,n}\|_{L^{\infty}(\Omega)} \bar w_{1,n} \bar w_{2,n} &\text{ in $\Omega$}\\
		- \Delta \bar w_{2,n} = - \omega \bar w_{2,n} + k \bar w_{2,n} u_n - \beta_n \|w_{1,n}\|_{L^{\infty}(\Omega)} \bar w_{1,n} \bar w_{2,n} &\text{ in $\Omega$}\\
		\partial_\nu \bar w_{i,n} = 0 &\text{ on $\partial \Omega$.}
	\end{cases}
\]
Let us observe that, thanks to Lemma \ref{lem comparable}, we know that the $\bar w_{i,n}$ are bounded by some positive constant $M> 0$. By redefining $\beta_n$ as $\beta_n \|w_{1,n}\|_{L^{\infty}(\Omega)}$ and exploiting the uniform estimates of Proposition \ref{prp asymptotic k}, we have that the sequence $(\bar w_{1,n}, \bar w_{2,n})$ are precompact in $C^{0,\alpha}(\overline{\Omega})$. Moreover, using the same initial steps as in Case 2) of Lemma \ref{lem comparable}, we see that $\bar w_{i,n} \to \bar w_{i,\infty}$ in $L^p(\Omega)$ for any $p < \infty$ and weakly in $H^1(\Omega)$ and also $\bar w_{i,\infty} \neq 0$. Letting $V_n = \bar w_{1,n} - \bar w_{2,n}$, we see that $\|V_n\|_{L^\infty(\Omega)} \leq M+1$ and
\[
	\begin{cases}
		- \Delta V_n = (-\omega + k u_n) V_n &\text{ in $\Omega$}\\
		\partial_\nu V_n = 0 &\text{ on $\partial \Omega$}.
	\end{cases}
\]
As a consequence of the strong convergence $u_n \to \lambda / \mu$, we see that $V_n \to V_\infty$ in $\C^{2,\alpha}(\Omega)$. This function $V_\infty$ is a solution of the following limit equation
\[
	\begin{cases}
		- \Delta V_\infty = \frac{\lambda k - \mu \omega}{\mu} V_\infty &\text{ in $\Omega$}\\
		\partial_\nu V_\infty = 0 &\text{ on $\partial \Omega$}
	\end{cases}
\]
where, by assumption, $(\lambda k - \mu \omega) / \mu \neq \gamma_i$, the eigenvalues of the Laplacian with Neumann boundary conditions. Consequently, $V_\infty \equiv 0$ and thus $\bar w_{1,\infty} = \bar w_{2,\infty} \neq 0$. Testing the equation in $\bar w_{i,n}$ by $\bar w_{i,n}$ itself, we find
\[
	\int_{\Omega} |\nabla \bar w_{i,n}|^2 + \beta_n \|w_{1,n}\|_{L^{\infty}(\Omega)} \int_{\Omega} \bar w_{i,n}^2 \bar w_{j,n}  = \int_{\Omega} \left(- \omega + k u_n\right) \bar w_{i,n}^2  \leq C
\]
which implies, in particular, that $\beta_n \|w_{1,n}\|_{L^{\infty}(\Omega)} \to C$ for some constant $C \geq 0$. Finally, passing to the limit in the equation in $\bar w_{i,n}$ we find
\[
	\begin{cases}
		- \Delta \bar w_{\infty} =  \left( - \omega  + k \frac{\lambda}{\mu}\right) \bar w_{\infty} - C \bar w_{\infty}^2 &\text{ in $\Omega$}\\
		\partial_\nu \bar w_{\infty} = 0 &\text{ on $\partial \Omega$}.
	\end{cases}	
\]
If $C = 0$, since $(\lambda k - \mu \omega) / \mu > 0$ and $\bar w_{\infty}$ is non negative by the maximum principle, it must be the case that $\bar w_{\infty} \equiv 0$, in contradiction with the renormalization. Thus $C > 0$ and, from a direct application of the maximum principle (see Lemma \ref{lem max neumann}), we have that the only non negative solution to the previous equation are the constant. In particular, it must be the case that $\bar w_{\infty} \equiv 1$, thus
\[
	C = \frac{\lambda k - \mu \omega}{\mu}. \qedhere
\]
\end{proof}

\section{One dimensional case}\label{sec dim one}

In the one-dimensional case the description of the bifurcation branches is more complete.
\begin{theorem}\label{thm bif one}
Under the assumptions of Theorem \ref{thm bif}, let us moreover suppose that $\Omega \subset \R$ is an open and bounded interval. Then any eigenvalue $\gamma$ of \eqref{eqn eigen lap neu} is of multiplicity one, and the corresponding continuum of solutions $\mathscr{C}_n$ (and $\mathfrak{C}_n$) generated from the set $\mathscr{S}_0$ at the value
\[
	\beta_n\frac{\lambda k - \mu \omega}{\mu \beta_n + 2 k^2} = \gamma_n
\]
is unbounded and it intersects the set $\mathscr{S}_0$ only once.
\end{theorem}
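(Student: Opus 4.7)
The plan is to exploit two features of the one-dimensional setting. First, the Neumann Laplacian on an interval $\Omega = (a, a+L)$ has simple eigenvalues $\gamma_n = (n\pi/L)^2$ with eigenfunctions $\psi_n(x) = \cos(n\pi(x-a)/L)$ having exactly $n$ interior zeros; in particular the odd-multiplicity hypothesis of Theorem \ref{thm bif} is automatically satisfied for every $n\geq 1$, yielding the continuum $\mathscr{C}_n$ and, via Theorem \ref{thm bif an}, the analytic curve $\mathfrak{C}_n \subset \mathscr{C}_n$. Second, and crucially, the difference $V := w_1 - w_2$ of the two predator densities of any solution of \eqref{eqn model red} satisfies the linear Sturm-Liouville problem
\[
  -V'' = (-\omega + ku)V \quad\text{in } \Omega, \qquad V'(a) = V'(a+L) = 0,
\]
obtained by subtracting the $w_1$- and $w_2$-equations, the competition cross-terms $-\beta w_1 w_2$ canceling identically.

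From this linearity, ODE uniqueness rules out double zeros of $V$ whenever $V\not\equiv 0$: both an interior point $x_0$ with $V(x_0) = V'(x_0) = 0$ and a boundary point (where the Neumann condition $V' = 0$ already holds) with $V = 0$ would force $V\equiv 0$. Hence $V$ has only finitely many, simple, interior zeros, and by standard Sturm-Liouville theory $V$ is, up to a scalar, the $k$-th Neumann eigenfunction of the Schr\"odinger-type operator $L(u) := -\partial_{xx} + \omega - ku$ associated with the eigenvalue $0$, where $k$ is exactly the number of zeros of $V$. At the bifurcation $\beta_n$, the Crandall-Rabinowitz expansion in the proof of Theorem \ref{thm bif} gives $V = 2s\psi_n + o(s)$ along the emanating branch, so $V$ has exactly $n$ zeros there.

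The central step is to show that this nodal count is preserved along the whole of $\mathscr{C}_n$. Define $N(\mathbf{v})$ to be the number of zeros of $V = w_1-w_2$ when $V\not\equiv 0$; on such points the simplicity of the zeros makes $N$ locally constant. On points where $V\equiv 0$ (symmetric solutions $w_1 = w_2$, which a priori need not lie on $\mathscr{S}_0$), I would extend $N$ by considering any approaching sequence $\mathbf{v}_m\to\mathbf{v}$ in $\mathscr{C}_n$ with $V_m\not\equiv 0$ and the normalizations $\tilde V_m := V_m/\|V_m\|_{L^\infty}$: the uniform elliptic bounds (Lemma \ref{lem reg ell}, Proposition \ref{prp asymptotic k}) make $\{\tilde V_m\}$ precompact in $C^{2,\alpha}$, every cluster point $\tilde V^*$ solves $L(u)\tilde V^* = 0$ with Neumann boundary conditions and $\|\tilde V^*\|_\infty = 1$, and the simplicity of one-dimensional Sturm-Liouville eigenvalues pins down $\tilde V^*$ up to sign with a well-defined nodal count, which by $C^1$-convergence equals that of $\tilde V_m$ for $m$ large. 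This makes $N$ continuous on the connected set $\mathscr{C}_n$, and since $N$ is integer-valued it is constantly equal to $n$.

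Both conclusions of the theorem then follow. Any bifurcation point $(\beta_m, c_m^*, c_m^*, u_m^*) \in \mathscr{C}_n \cap \mathscr{S}_0$ admits, again by Crandall-Rabinowitz, a local expansion of $V$ in the direction of $\psi_m$, giving $N = m$ nearby; combined with $N\equiv n$ this forces $m = n$, and since $\mathscr{S}_0$ contains a single element for each value of $\beta$ we obtain $\mathscr{C}_n \cap \mathscr{S}_0 = \{(\beta_n, c_n^*, c_n^*, u_n^*)\}$. The Rabinowitz dichotomy of Theorem \ref{thm bif} then forces $\mathscr{C}_n$ to be unbounded in $\beta$, and the loop alternative of Theorem \ref{thm bif an} for $\mathfrak{C}_n$ is excluded because it would require a second distinct bifurcation point on $\mathscr{S}_0$; hence $\mathfrak{C}_n$ too is unbounded with $\beta(s)\to\infty$. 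The delicate point that truly uses the one-dimensional hypothesis is the continuous extension of $N$ across symmetric solutions that may not lie on $\mathscr{S}_0$, and this is precisely where the simplicity of Sturm-Liouville eigenvalues plays an essential role.
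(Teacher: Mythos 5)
Your proof captures the key mechanism of the paper's argument: the difference $V=w_1-w_2$ solves a linear Sturm--Liouville problem with Neumann conditions (the cross terms cancel), ODE uniqueness forbids double zeros, the Crandall--Rabinowitz expansion gives $V\sim 2\eps\psi_n$ near the bifurcation, and preservation of the nodal count along the continuum forces $\mathscr{C}_n\cap\mathscr{S}_0=\{\beta_n\}$, which combined with the Rabinowitz/Dancer dichotomies yields unboundedness. However, there is a genuine gap in the step where you pass the nodal count across points with $V\equiv 0$. Your renormalization argument (take $\tilde V_m = V_m/\|V_m\|_\infty$, extract a $C^2$ cluster point $\tilde V^*$ solving $-\tilde V^{*\prime\prime}=(-\omega+ku^*)\tilde V^*$, and use simplicity of Sturm--Liouville eigenvalues) only works at points that are accessible by a sequence in $\mathscr{C}_n$ with $V_m\not\equiv 0$. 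It does not rule out the possibility that $\{V\equiv 0\}\cap\mathscr{C}_n$ has nonempty interior in $\mathscr{C}_n$ --- that is, a whole arc of \emph{nonconstant} symmetric solutions $w_1\equiv w_2$. Such a chunk would disconnect $\overline{\{V\not\equiv 0\}\cap\mathscr{C}_n}$, and the integer-valued function $N$ could take different constant values on the two sides with no contradiction, because on the interior of the chunk $N$ is simply undefined. You even flag these symmetric solutions as ``a priori need not lie on $\mathscr{S}_0$,'' but you do not resolve what happens if they form a continuum.

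The paper closes precisely this gap with Lemma \ref{lem u constant} together with Mimura's theorem: if $V\equiv 0$, i.e.\ $w_1\equiv w_2$, then setting $W=w_1+w_2$ one obtains a two-component prey--predator system in $(W,u)$ (with a logistic term $-\tfrac{\beta}{2}W^2$), to which Mimura's classification applies, forcing $(w_1,w_2,u)$ to be a constant solution. By Proposition \ref{prop beta pos} it therefore lies on $\mathscr{S}_0$, and the bifurcation points on $\mathscr{S}_0$ are the isolated values $\beta_m$. Hence $\{V\equiv 0\}\cap\mathscr{C}_n$ is a discrete set of isolated constant solutions, the interior of the symmetric set is empty, and your continuity argument then applies without obstruction. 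So the missing ingredient is a rigidity result for symmetric solutions; once you add this lemma (or some substitute showing that $w_1\equiv w_2$ forces constancy), your proof coincides in essence with the paper's, with the minor cosmetic difference that you phrase the local nodal-count comparison at the bifurcation points via Sturm--Liouville simplicity rather than directly via the Crandall--Rabinowitz expansion as the paper does.
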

\begin{proof}
The proof follows again the main ideas presented in \cite{Rabinowitz_JFA}. Under the assumptions, there exist $a < b \in \R$ such that $\Omega = (a,b) \subset \R$. We can explicitly compute the eigenvalues of \eqref{eqn eigen lap neu}, which are given by 
\[
	\gamma_n := \left(\frac{\pi}{b-a} n\right)^2 \qquad \text{for any $n \in \N$}.
\]
Based on the discussion in Theorem \ref{thm bif}, any value of $\gamma_n$ corresponds to a bifurcation point. This is true even for the set $\mathscr{C}_0$ generating from $\gamma_0 = 0$, which is given by a trivial linear subspace of constant solutions, subject of Lemma \ref{lem trivial branch of sol}.

Let us consider, for a fixed $\gamma_n$ with $n \geq 1$ as before, the continuum of solutions $\mathscr{C}_n$ that emerges from the set $\mathscr{S}_0$. By the perturbations analysis conducted in Theorem \ref{thm bif}, we know that the solutions are of the form
\[
	(w_{1,\beta}, w_{2,\beta}, u_{\beta}) = \left(\frac{\lambda k - \mu \omega}{\mu \beta_n + 2 k^2}, \frac{\lambda k - \mu \omega}{\mu \beta_n + 2 k^2}, \frac{\lambda \beta_n + 2 k \omega}{\mu \beta_n + 2 k^2} \right) + \eps (\psi_n, -\psi_n,0) + o(\eps)	
\]
where $\eps$ is a parameter such that $\eps \to 0$ when $\beta \to \beta_n$, $\psi_n$ is a normalized eigenfunction of \ref{eqn eigen lap neu} in $\Omega = (a,b)$ and $o(\eps)$ is a perturbation in $\C^{2,\alpha}([a,b])$ of order less than $\eps$. In particular, letting
\[
	v_{\beta,n} = w_{1,\beta} - w_{2,\beta} = 2 \eps \psi_n + o(\eps)
\]
(where we have explicitly stated the index $n$ of the eigenfunction which spans $v_{\beta,n}$) we have that $v_{\beta,n}$ solves
\begin{equation}\label{eqn ode}
	\begin{cases}
		- v_{\beta,n}'' = (- \omega + k u_\beta ) v_{\beta,n} &\text{ in $(a,b)$,}\\
		v_{\beta,n}'(a) = v_{\beta,n}'(b) = 0.
	\end{cases}
\end{equation}
As a result, when $\eps$ is small, $v_{\beta}$ has exactly $n$ distinct simple zeroes in $(a,b)$, located closely to the zeroes of the eigenfunction $\psi_n$. We recall that the solutions of the system \eqref{eqn model red} are bounded in $Lip([a,b])$ uniformly with respect to $\beta$ and in particular the last component $u_\beta$ is bounded in $\C^{2,\alpha}([a,b])$ for all $\alpha < 1$: it follows that there exists a parametrization of the continuum $\mathscr{C}_n$ with respect to which the functions $v_{\beta,n}$ vary smoothly and they also are uniformly bounded in $\C^{2,\alpha}([a,b])$ for all $\alpha < 1$.

We claim that on each continuum of solutions $\mathscr{C}_n$, the number of zeroes of the function $v_{\beta,n}$ does not change. To this end, we adapt a classical argument for scalar equations to the present situation of a system. To prove the claim, we first observe that the solutions depend smoothly in $\C^{2,\alpha}([a,b])$ on the parametrization. Thus, if $v_{\beta,n}$ changes the number of zeros, there exists a solution $v$ of \eqref{eqn ode} inside $\mathscr{C}_n$ that has a zeros of multiplicity at least two. This point could be located at the interior or at the boundary of the interval $(a,b)$, thanks to the homogeneous Neumann condition. The uniqueness theorem for ordinary differential equations with smooth coefficients applied to \eqref{eqn ode} implies that the function $v$ must be identically 0. As a result, the corresponding solution $(w_1,w_2,u)$ has first and second components that are equal: reasoning as in Lemma \ref{lem trivial branch of sol}, by letting $V = w_1 + w_2$, we obtain a solution to
\[
	\begin{cases}
		- V'' = - \omega V + k V u - \beta V^2&\text{ in $(a,b)$}\\
		- u'' = \lambda u - \mu u^2 - k V u &\text{ in $(a,b)$}\\
		V' = u' = 0 &\text{ on $\{a,b\}$}.
	\end{cases}
\]
and again thanks to the results in \cite{Mimura}, the solution $(w_1,w_2,u)$ must be a constant solution. By Proposition \ref{prop beta pos}, we already know that $\beta > 0$. Thus $(w_1,w_2,u)$ belongs necessarily to the set $\mathscr{S}_0$ (we recall that if $\beta > 0$ the only bifurcation points belong to $\mathscr{S}_0$, see Remark \ref{rem stab indip beta}). From the previous discussion it must be that the point corresponds to a different eigenvalue $\gamma_m$, $m \neq n$, and locally the solutions can be written as a perturbation along the line spanned by the eigenfunction $\psi_m$. In the same vein of the previous argument, the difference of the first two components can be asymptotically expanded as
\[
	v_{\beta,m} = w_{1,\beta} - w_{2,\beta} = 2 \eps \psi_m + o(\eps)
\]
and, again, for $\eps \to 0$, the solution $v_{\beta,m}$ has $m$ distinct simple zeroes on $(a,b)$. In particular, the solution must have $m \neq n$ zeros in a neighborhood of the bifurcation point, leading us to a contradiction.
\end{proof}

We can show the following stronger version of Lemma \ref{lem asymp positive sol}, which completes the analysis of the bifurcation diagram in  dimension one.
\begin{lemma}\label{lem asymp positive 1d}
Let $\Omega \subset \R$ be an open and bounded interval. Then same conclusion of Lemma \ref{lem asymp positive sol}. Moreover, let $(w_{1,n}, w_{2,n}, u_n) \in \mathscr{P}$ be any converging sequence for $\beta_n \to +\infty$ and let $(w_{1,\infty}, w_{2, \infty}, u_\infty)$ be its limit. Then either $(w_{1,\infty}, w_{2, \infty}, u_\infty)$ has all its components non zero or for $n$ large it holds
\[
		(w_{1,n}, w_{2,n}, u_n) =  \left(\frac{\lambda k - \mu \omega}{\mu \beta_n + 2k^2}, \frac{\lambda k - \mu \omega}{\mu \beta_n + 2k^2},\frac{\lambda \beta_n + 2 k \omega}{\mu \beta_n + 2 k^2} \right).
\]
\end{lemma}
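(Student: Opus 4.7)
The plan is to start from the dichotomy of Lemma \ref{lem asymp positive sol} and upgrade its second alternative---where $w_{1,n}, w_{2,n} \to 0$ uniformly and $u_n \to \lambda/\mu$ in $\C^{2,\alpha}$---from an asymptotic description to the claimed equality with the $\mathscr{S}_0$ branch. The argument naturally splits into two stages.

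In the first stage I would subtract the two predator equations: the quadratic competition terms $\beta_n w_{1,n} w_{2,n}$ cancel identically, a feature peculiar to the symmetric $N=2$ case, so that $v_n := w_{1,n} - w_{2,n}$ solves the \emph{linear} Neumann problem
\[
	-v_n'' = (-\omega + k u_n)\, v_n \text{ in } (a,b), \qquad v_n'(a) = v_n'(b) = 0.
\]
If $v_n \not\equiv 0$ along a subsequence, I normalize $\tilde v_n := v_n/\|v_n\|_{L^\infty}$; this family is $\C^2$-precompact since the coefficient $-\omega + ku_n$ converges uniformly to $(\lambda k - \mu\omega)/\mu$ and the ODE is linear. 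Any accumulation point $\tilde v_\infty$ then satisfies
\[
	-\tilde v_\infty'' = \tfrac{\lambda k - \mu\omega}{\mu}\,\tilde v_\infty,\qquad \tilde v_\infty'(a) = \tilde v_\infty'(b) = 0,\qquad \|\tilde v_\infty\|_{L^\infty} = 1,
\]
contradicting the strengthening of (H) that $(\lambda k - \mu\omega)/\mu$ is not a Neumann eigenvalue of $-\Delta$. Hence $w_{1,n} \equiv w_{2,n} =: W_n$ for $n$ large.

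In the second stage I compare $(W_n, u_n)$ with the constant solution $W_* := (\lambda k - \mu\omega)/(\mu\beta_n + 2k^2)$, $u_* := (\lambda \beta_n + 2k\omega)/(\mu\beta_n + 2k^2)$ of the reduced system. Setting $\psi_n := W_n - W_*$, $\phi_n := u_n - u_*$ and exploiting the cancellation of the reaction at $(W_*, u_*)$ gives
\[
	\begin{dcases}
		-\psi_n'' + \beta_n W_*\psi_n - kW_*\phi_n = -\beta_n\psi_n^2 + k\psi_n\phi_n,\\
		-\phi_n'' + \mu u_*\phi_n + 2ku_*\psi_n = -\mu\phi_n^2 - 2k\psi_n\phi_n,
	\end{dcases}
\]
with Neumann data. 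The preliminary step is to promote the weak-$H^1$ convergence $\beta_n W_n \to (\lambda k - \mu\omega)/\mu$ from Lemma \ref{lem asymp positive sol} to uniform convergence via the one-dimensional Sobolev embedding $H^1((a,b))\hookrightarrow \C^0$, so that $\beta_n \psi_n \to 0$ uniformly; then, after dividing by $\|\psi_n\|_{L^\infty} + \|\phi_n\|_{L^\infty}$, every quadratic error---including the dangerous $\beta_n\psi_n^2$---vanishes uniformly. Assuming $(\psi_n,\phi_n)\not\equiv 0$ along a subsequence, I pass to the $\C^2$-limit of the renormalized pair, using $\beta_n W_* \to (\lambda k - \mu\omega)/\mu$, $kW_* \to 0$, $\mu u_*\to \lambda$, $2ku_*\to 2k\lambda/\mu$, to obtain
\[
	-\tilde\psi_\infty'' + \tfrac{\lambda k - \mu\omega}{\mu}\,\tilde\psi_\infty = 0,\qquad -\tilde\phi_\infty'' + \lambda\,\tilde\phi_\infty + \tfrac{2k\lambda}{\mu}\,\tilde\psi_\infty = 0,
\]
with Neumann conditions. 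The first equation forces $\tilde\psi_\infty \equiv 0$ by the same non-resonance assumption; the residual scalar equation $-\tilde\phi_\infty'' + \lambda\tilde\phi_\infty = 0$ with Neumann data admits only the zero solution since $\lambda > 0$ is not a Neumann eigenvalue of $-\Delta$. This contradicts $\|\tilde\psi_\infty\|_{L^\infty} + \|\tilde\phi_\infty\|_{L^\infty} = 1$ and proves $(W_n, u_n) = (W_*, u_*)$ for $n$ large.

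The principal obstacle is the singular nature of the linearization in Stage 2: as $\beta_n \to +\infty$ the coupling coefficient $kW_*$ vanishes while the diagonal term $\beta_n W_*$ stays bounded and positive, and one of the nonlinear errors is multiplied by the large parameter $\beta_n$. Controlling that error demands upgrading the $L^p$/weak-$H^1$ convergence of Lemma \ref{lem asymp positive sol} to uniform convergence; this is precisely where the one-dimensional hypothesis is used substantively, through the Sobolev embedding $H^1 \hookrightarrow \C^0$. Stage 1, by contrast, is more straightforward once the cancellation specific to the two-predator case is exploited.
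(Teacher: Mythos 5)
Your argument is correct, and it reaches the conclusion by a route that differs noticeably from the paper's. The paper's proof is a brief pointer to Theorem \ref{thm bif one}: it tracks the number of nodes of $v_\beta = w_{1,\beta}-w_{2,\beta}$, which the nodal-count argument shows is constant along each branch, and, once $v_\beta \equiv 0$ is established, invokes Mimura's rigidity theorem for the reduced two-component Lotka--Volterra system $(V,u)$ with $V = 2W_n$ to conclude that the solution is constant, hence on $\mathscr{S}_0$. Your Stage 1 replaces the nodal count by a direct renormalization of $v_n$ and a passage to the limit in the linear Neumann ODE, invoking the non-resonance hypothesis on $(\lambda k - \mu\omega)/\mu$; the two arguments exploit the same linear structure, so this is a minor variation. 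Your Stage 2 is the genuinely different part: instead of appealing to Mimura, you linearize around the $\mathscr{S}_0$ branch, normalize $(\psi_n,\phi_n) = (W_n - W_*, u_n - u_*)$, and show that the singular linear system decouples in the limit. To make the nonlinear remainder $\beta_n\psi_n^2/\delta_n$ vanish you rightly upgrade the weak-$H^1$ convergence of $\beta_n W_n$ from Lemma \ref{lem asymp positive sol} to uniform convergence via the $1$d embedding $H^1\hookrightarrow \C^0$; this is precisely where the one-dimensionality enters and is the crux of your Stage 2. Each approach has its merit: the paper's route is shorter because it leans on the strong rigidity theorem of \cite{Mimura} and the branch structure already established, while yours is more elementary and self-contained, and in particular does not require knowing a priori that the sequence lies on a bifurcation branch. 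One small inaccuracy in your write-up: in the limit equations of Stage 2, $-\tilde\psi_\infty'' + \frac{\lambda k - \mu\omega}{\mu}\tilde\psi_\infty = 0$ and $-\tilde\phi_\infty'' + \lambda\tilde\phi_\infty = 0$ kill their respective solutions by the \emph{sign} of the zero-order coefficient (the relevant candidate eigenvalues $-\frac{\lambda k - \mu\omega}{\mu}$ and $-\lambda$ are negative, hence never in the Neumann spectrum), not by the non-resonance hypothesis; that hypothesis is only needed in Stage 1, where the eigenvalue appears with the opposite sign. This does not affect correctness, only the stated reason.
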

\begin{proof}
As in Theorem \ref{thm bif one}, we can use the auxiliary function $v_\beta = w_{1,\beta}  - w_{2,\beta}$ to study more accurately the second case of the lemma. The conclusion is reached once again by counting the number of zeros of $v_\beta$ and observing that this must be constant on each bifurcation branch.
\end{proof}
As a direct consequence, we have that there exists $\delta > 0$ such that, for $\beta'$ sufficiently large,
\[
	\mathrm{dist} \left(\mathscr{P} \setminus \mathscr{S}_0, \mathscr{S}_0 \right) > \delta \qquad \text{for all $\beta \geq \beta'$}
\]
where the distance is taken in the sense of the $\C^{0,\alpha} \times \C^{0,\alpha} \times \C^{2,\alpha}(\overline{\Omega})$ norm for any $\alpha \in (0,1)$. Moreover each branch of solutions constructed in Theorem \ref{thm bif one} converge (up to a subsequence) to a disjoint set of solutions for the limit problem, characterized by the different number of zeroes for the function $V$ and do not re-collapse on the set $\mathscr{S}_0$ for $\beta = +\infty$.

\section{Maximum number of densities}\label{sec opt rep}
We continue the investigation of the model by addressing a fundamental question: can the model be used to predict the optimal repartition of the domain $\Omega$ in hunting territories, that is, the optimal number of packs?

To answer this question, we first focus on the limit stationary system satisfied by the densities in the case of segregation. We shall prove two complementary results in this direction.
\begin{enumerate}
	\item We show that each bounded domain $\Omega \Subset \R^n$ can sustain a maximum number of densities of predators (see Lemma \ref{lem max packs} and  Theorem \ref{thm max packs}). As we will see, this implies that there exists a number $k \geq 1$ of packs that, for a given configuration of parameters, maximizes the total population of predators.
	\item Then, we show that, for a particular choices of the parameters, the total population of predators in the case of two packs is strictly higher than that of one pack only, implying that in these cases the optimal configuration is given by a finite number of packs strictly greater than two.
\end{enumerate}

We start with the following result, which states that for each environment $\Omega$ there is a maximal number of densities of predators $\bar N$ that can be sustained.
\begin{lemma}\label{lem max packs}
For a given smooth domain $\Omega \subset \R^n$, there exists $\bar N \in \N$ such that any non negative solution $(w_1, \dots, w_N, u) \in H^1(\Omega)$ of \eqref{eqn segr model} has at most $\bar N$ non zero components among $(w_1, \dots, w_N)$.
\end{lemma}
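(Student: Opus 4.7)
The plan rests on two observations: in the segregated limit, distinct species occupy disjoint open nodal sets, and on each such set the surviving species is a subsolution of a linear equation whose potential is bounded uniformly in $N$.

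\textbf{Step 1 (bound on $u$ and linearization).} First, I would use the maximum principle on the equation for $u$ in \eqref{eqn segr model} to deduce $u \le \lambda/\mu$ in $\Omega$, as in Lemma \ref{lem reg ell}. Fix any index $i$ such that $w_i \not\equiv 0$, and set $\Omega_i := \{x \in \Omega : w_i(x) > 0\}$, an open subset of $\Omega$. Dropping the non-positive $\mu_i w_i$ term and using the bound on $u$, the first inequality in \eqref{eqn segr model} yields
\[
-d_i \Delta w_i \le (k_i u - \omega_i)\, w_i \le \frac{\lambda k_i - \mu \omega_i}{\mu}\, w_i \quad \text{on } \Omega_i,
\]
with $w_i = 0$ on the free boundary $\partial \Omega_i \cap \Omega$ and $\partial_\nu w_i = 0$ on $\partial \Omega \cap \overline{\Omega_i}$.

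\textbf{Step 2 (segregation).} I would next invoke the pairwise disjointness of the sets $\Omega_i$, which is exactly the content of the segregation already recorded in Proposition \ref{prp asymptotic k}: the second differential inequality in \eqref{eqn segr model}, applied to each pair of indices, forces $w_i w_j \equiv 0$ in $\Omega$ via the strong maximum principle on the combination $d_i w_i - d_j w_j$. In particular the open sets $\Omega_i$ are pairwise disjoint subsets of $\Omega$.

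\textbf{Step 3 (Faber--Krahn).} Testing the inequality of Step 1 against $w_i$ itself in $H^1(\Omega_i)$ and integrating by parts (boundary terms vanishing by the respective Dirichlet and Neumann conditions) gives the Rayleigh-quotient bound
\[
\frac{\int_{\Omega_i} |\nabla w_i|^2}{\int_{\Omega_i} w_i^2} \le M, \qquad M := \max_{1 \le i \le N} \frac{\lambda k_i - \mu \omega_i}{d_i\, \mu},
\]
so the first mixed eigenvalue of $-\Delta$ on $\Omega_i$ (Dirichlet on the free boundary, Neumann on $\partial \Omega \cap \overline{\Omega_i}$) is at most $M$. A Faber--Krahn-type lower bound then provides a constant $c = c(\Omega,n) > 0$ such that $\lambda_1^{\mathrm{mix}}(\Omega_i) \ge c\, |\Omega_i|^{-2/n}$ whenever $|\Omega_i| \le |\Omega|/2$; this is obtained by extending $w_i$ by zero across the free boundary (which is legitimate since $w_i$ has zero trace there) and reflecting across the smooth Neumann portion of $\partial \Omega$, reducing matters to the classical Dirichlet Faber--Krahn inequality. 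Combined with the Rayleigh bound, this yields a uniform lower bound $|\Omega_i| \ge \delta(\Omega,n,M) > 0$.

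\textbf{Step 4 (counting).} Disjointness of the $\Omega_i$ gives $\sum_{i : w_i \not\equiv 0} |\Omega_i| \le |\Omega|$, so the number of non-zero components is at most $\bar N := \lfloor |\Omega|/\delta \rfloor$, which proves the lemma. The main obstacle is Step 3: the free boundary $\partial \Omega_i \cap \Omega$ is only a priori rectifiable, so the mixed eigenvalue is not directly controlled by classical Faber--Krahn; the extension-plus-reflection trick circumvents this, and in dimension $n = 2$ tracking the sharp constants recovers the quantitative upper bound $\bar N \lesssim |\Omega| M /(4\pi)$ announced in the introduction.
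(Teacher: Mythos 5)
Your proposal is substantially correct and the overall strategy (bound the mixed first eigenvalue of each support $\Omega_i$ by $M:=\max_i(\lambda k_i-\mu\omega_i)/(d_i\mu)$, then count) matches the paper through your Step~1. Where you diverge is the counting step: the paper invokes the optimal-partition result of Helffer--Hoffmann-Ostenhof--Terracini, namely that for any $N$-partition $\mathcal{D}$ of $\Omega$ one has $\Lambda(\mathcal{D}) \ge \gamma_N(\Omega)$, where $\gamma_N$ is the $N$-th Neumann eigenvalue of the Laplacian on $\Omega$. This gives $\gamma_N(\Omega) < M$, and then the unboundedness of the eigenvalue sequence bounds $N$; the quantitative constant in Theorem~6.3 then drops out directly from Weyl's law applied to $N(\gamma)$. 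Your alternative Faber--Krahn plus volume argument also yields the qualitative conclusion, but note two caveats. First, for a general smooth $\Omega$ the reflection across the curved Neumann portion of $\partial\Omega$ does not literally reduce the mixed problem to the Dirichlet case; what does work is to extend $w_i$ by zero to $H^1(\Omega)$ and use the $\Omega$-dependent Sobolev inequality together with H\"older on $\Omega_i$ to get $\lambda_1^{\mathrm{mix}}(\Omega_i)\ge c(\Omega,n)|\Omega_i|^{-2/n}$ when $|\Omega_i|$ is small, but $c(\Omega,n)$ is then not explicit. Second, even in the flat (reflectable) case your constant would be $\pi j_0^2$ (where $j_0$ is the first zero of $J_0$), not $4\pi$; these do not coincide, so Faber--Krahn does not recover the Weyl-law constant announced in the introduction, only an upper bound of the same order. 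Finally, your Step~2 asserting $w_i w_j\equiv 0$ from the second inequality of \eqref{eqn segr model} alone is not rigorous; the pairwise disjointness of the $\Omega_i$ is really part of the conclusion of Proposition~\ref{prp asymptotic k} about the limit configuration (via the $H^1$ bound forcing $\beta\int w_{i,\beta}w_{j,\beta}\to 0$), and the paper's proof likewise takes it for granted when it declares that the $\Omega_i$ form an $N$-partition. So your approach buys a self-contained, elementary argument at the cost of the sharp constant and a cleaner appeal to segregation.
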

In order to prove the previous result, we need to recall the notion of optimal partition (see for instance \cite{HelfHofOstTerr_2009} for a general survey and some fundamental results). For consistency with the theory of optimal partitions, in the next two results, eigenvalues will be counted starting from the index 1. For any $1 \leq h \in \N$ we say that a family $\mathcal{D} = \{ D_1, \dots, D_h\}$ of open subsets of $\Omega$ is a $h$ (open) partition of $\Omega$ if
\[
	D_i \cap D_j = \emptyset \; \forall i \neq j \text{ and } \overline{\cup_{i=1}^h D_i} = \overline{\Omega}.
\]
For each $D_i$, we define the generalized first eigenvalue as
\[
	\gamma_1(D_i) := \inf \left\{ \left. \int_{\Omega} |\nabla \varphi|^2 \right/ \int_{\Omega} \varphi^2 : \varphi \in H^1(\Omega), \; \varphi = 0 \text{ in $\Omega \setminus D_i$} \right\}
\]
and for the partition $\mathcal{D}$ we assign the total value
\[
	\Lambda(\mathcal{D}) = \max_{i} \gamma_1(D_i).
\]
A partition $\mathcal{D}$ is optimal if it minimizes the value of $\Lambda(\mathcal{D})$ among all $N$-partitions. We recall the following result (see \cite[Corollary 5.6]{HelfHofOstTerr_2009}), which follows from the Courant-Fischer-Weyl characterization of the eigenvalues of compact hermitian operators.
\begin{theorem}\label{thm eig bound hhot}
Let $\gamma_h(\Omega)$ be the $h$-th eigenvalue (counted with multiplicity) of
\[
	\begin{cases}
		-\Delta \varphi = \gamma \varphi &\text{in $\Omega$}\\
		\partial_{\nu} \varphi = 0 &\text{on $\partial \Omega$}.
	\end{cases}
\]
Then 
\[
	\Lambda(\mathcal{D}) \geq \gamma_h(\Omega)
\]
for all $h$-partitions $\mathcal{D}$ of $\Omega$.
\end{theorem}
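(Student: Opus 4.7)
The plan is to deduce this eigenvalue–partition inequality from the Courant--Fischer--Weyl min–max characterization of the Neumann eigenvalues, by using the $h$ generalized first eigenfunctions associated to the sets of the partition as an $h$-dimensional test subspace.

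Fix an $h$-partition $\mathcal{D} = \{D_1, \dots, D_h\}$ of $\Omega$ and an arbitrary $\varepsilon > 0$. For each $i \in \{1,\dots,h\}$, by the definition of $\gamma_1(D_i)$ as an infimum, I can pick $\varphi_i \in H^1(\Omega)$ with $\varphi_i \equiv 0$ on $\Omega \setminus D_i$, normalized by $\int_\Omega \varphi_i^2 = 1$, and satisfying
\[
\int_\Omega |\nabla \varphi_i|^2 \leq \gamma_1(D_i) + \varepsilon \leq \Lambda(\mathcal{D}) + \varepsilon .
\]
Since the open sets $D_i$ are pairwise disjoint, for $i \neq j$ the functions $\varphi_i$ and $\varphi_j$ vanish on each other's support, hence $\varphi_i \varphi_j = 0$ and $\nabla \varphi_i \cdot \nabla \varphi_j = 0$ almost everywhere in $\Omega$. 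In particular, the family $\{\varphi_1,\dots,\varphi_h\}$ is linearly independent and the span $V := \mathrm{span}(\varphi_1,\dots,\varphi_h) \subset H^1(\Omega)$ has dimension exactly $h$.

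For an arbitrary element $\varphi = \sum_i c_i \varphi_i \in V \setminus \{0\}$, the orthogonality relations and the estimates above give
\[
\int_\Omega |\nabla \varphi|^2 = \sum_{i=1}^h c_i^2 \int_\Omega |\nabla \varphi_i|^2 \leq (\Lambda(\mathcal{D}) + \varepsilon) \sum_{i=1}^h c_i^2 = (\Lambda(\mathcal{D}) + \varepsilon) \int_\Omega \varphi^2 ,
\]
so that the Rayleigh quotient of every $\varphi \in V$ is bounded by $\Lambda(\mathcal{D}) + \varepsilon$. The variational characterization of the $h$-th Neumann eigenvalue then yields
\[
\gamma_h(\Omega) = \inf_{\substack{W \subset H^1(\Omega) \\ \dim W = h}} \sup_{\varphi \in W \setminus \{0\}} \frac{\int_\Omega |\nabla \varphi|^2}{\int_\Omega \varphi^2} \leq \sup_{\varphi \in V \setminus \{0\}} \frac{\int_\Omega |\nabla \varphi|^2}{\int_\Omega \varphi^2} \leq \Lambda(\mathcal{D}) + \varepsilon ,
\]
and letting $\varepsilon \to 0$ concludes the argument.

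The only mildly delicate point is the claim that the $\varphi_i$ are genuinely mutually orthogonal (for both the $L^2$ and Dirichlet inner products) in $H^1(\Omega)$. This is where the hypothesis that the $D_i$ are pairwise disjoint open sets is essential, together with the constraint $\varphi_i = 0$ on $\Omega \setminus D_i$ built into the definition of $\gamma_1(D_i)$: the boundary overlaps $\overline{D_i} \cap \overline{D_j}$ lie in $\partial D_i \cap \partial D_j$, which has Lebesgue measure zero, so all the bilinear integrals vanish. Everything else is a direct application of the Courant--Fischer--Weyl principle and no further regularity of $\Omega$ or of the partition is needed.
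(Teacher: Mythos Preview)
Your proof is correct and is exactly the Courant--Fischer--Weyl argument the paper points to; the paper itself does not spell out a proof but merely cites \cite[Corollary 5.6]{HelfHofOstTerr_2009} and remarks that it follows from this min--max principle. One small simplification: your final paragraph about $\overline{D_i}\cap\overline{D_j}$ having measure zero is unnecessary, since $\varphi_i$ vanishes on \emph{all} of $\Omega\setminus D_i \supset D_j$ by construction (and hence so does $\nabla\varphi_i$ a.e.\ there), which already forces $\varphi_i\varphi_j=0$ and $\nabla\varphi_i\cdot\nabla\varphi_j=0$ a.e.\ without any consideration of boundary size.
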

\begin{proof}[Proof of Lemma \ref{lem max packs}]
Consider the solution $(w_1, \dots, w_N,u)$ of \eqref{eqn segr model}. If the component $u$ is zero, all the remaining components of the solution are zero. This can be derived by testing the inequalities in \eqref{eqn segr model} with $w_i \in Lip(\overline{\Omega})$. Indeed, we thus obtain
\[
	\int_{\Omega} d_i |\nabla w_i|^2 + ( \omega_i - k_i u)  w_i^2 = 0
\]
which yields the claim taking into account that $u \equiv 0$. As a result, we can assume $u \geq 0$ and $u \not \equiv 0$. By the maximum principle applied to the equation in $u$, we find that $u$ is strictly positive. Moreover, since $w_i \geq 0$ for $i =1, \dots, k$, we have
\[
	- D \Delta u = \left(\lambda - \mu u - \sum_{i=1}^N k_i w_i \right)u \leq \left(\lambda - \mu u \right)u \quad \text{ thus } u \leq \frac{\lambda}{\mu}.
\]
On the other hand, we have
\[
	- d_i \Delta w_i  = \left(- \omega_i + k_i u\right) w_i \leq \left(- \omega_i + k_i \frac{\lambda}{\mu} \right) w_i.
\]
That is, letting $\Omega_i := \{ w_i > 0\}$, $w_i$ satisfies 
\[
	\begin{cases}
		- d_i \Delta w_i  \leq \left(- \omega_i + k_i \frac{\lambda}{\mu}\right) w_i &\text{in $\Omega_i$}\\
		w_i = 0 &\text{on $\partial \Omega_i \cap \Omega$}\\
		\partial_\nu  w_i = 0 &\text{on $\partial \Omega_i \cap \partial \Omega$}
	\end{cases}
\]
Considering the first eigenvalue $\gamma_1(\Omega_i)$ of $\Omega_i$, that is the minimal value of the following mixed problem in $\Omega_i$:
\[
	\begin{cases}
		- \Delta \varphi_i  = \gamma_1(\Omega_i) \varphi_i &\text{in $\Omega_i$}\\
		\varphi_i = 0 &\text{on $\partial \Omega_i \cap \Omega$}\\
		\partial_\nu  \varphi_i  = 0 &\text{on $\partial \Omega_i \cap \partial \Omega$}
	\end{cases}
\]
with $\varphi_i \neq 0$. It is known that $\gamma_1(\Omega_i) \geq 0$ and $\varphi_i > 0$ in $\Omega_i$. From the comparison principle, it follows that
\[
	\text{if} \quad \frac{\lambda k_i - \mu \omega_i}{d_i \mu} < \gamma_1(\Omega_i) \quad \text{then} \quad w_i \equiv 0.
\]
In particular we see that if all the components $w_1, \dots, w_N$ are $\not \equiv 0$ then necessarily 
\[
	\max_{i = 1, \dots, k} \gamma_1(\Omega_i) < \max_{i = 1, \dots, k} \frac{\lambda k_i - \mu \omega_i}{d_i \mu} = \bar \gamma.
\]
Since $\Omega_1, \dots, \Omega_N$ is a $N$-partition of the set $\Omega$, we evince by Theorem \ref{thm eig bound hhot} that necessarily
\[
	\gamma_N(\Omega) < \bar \gamma.
\]
As a result, we reach the desired conclusion recalling that the sequence of eigenvalues $\gamma_1 < \gamma_2 \leq \gamma_3 \leq \dots$ is unbounded.
\end{proof}
Using Weyl's asymptotic law for the Laplacian with Neumann boundary conditions, we can obtain a more explicit bound on the constant $\bar N$. This is stated in the next theorem.

\begin{theorem}
The number $\bar N$ of Lemma \ref{lem max packs} admits the following asymptotic upper bound
\[
	\bar N \lesssim \frac{\omega_n}{(2\pi)^n} |\Omega| \left(\max_{i = 1, \dots, k} \frac{\lambda k_i - \mu \omega_i}{d_i \mu}\right)^{n/2} \qquad \text{for $\max_{i = 1, \dots, k} \frac{\lambda k_i - \mu \omega_i}{d_i \mu} \to +\infty$}.
\]
where $\omega_n$ is the volume of the unit sphere in $\R^n$.
\end{theorem}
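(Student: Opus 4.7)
The proof will be a direct combination of the bound obtained in Lemma \ref{lem max packs} with the classical Weyl asymptotic for the Neumann Laplacian, so the real work has already been done.

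The plan is as follows. Starting from Lemma \ref{lem max packs}, we know that if a non negative solution $(w_1,\dots,w_N,u)$ of \eqref{eqn segr model} has all $N$ components $w_i$ nontrivial, then the family $\Omega_i = \{w_i > 0\}$ forms an $N$-partition of $\Omega$ and, by the comparison argument there,
\[
	\gamma_N(\Omega) \; < \; \bar\gamma \;:=\; \max_{i=1,\dots,k} \frac{\lambda k_i - \mu \omega_i}{d_i \mu},
\]
where $\gamma_N(\Omega)$ is the $N$-th Neumann eigenvalue of $-\Delta$ on $\Omega$. Consequently, $\bar N$ is dominated by the Neumann counting function
\[
	\mathcal{N}(\tau) := \#\{\,j \in \N : \gamma_j(\Omega) < \tau \,\},
\]
evaluated at $\tau = \bar\gamma$; that is, $\bar N \leq \mathcal{N}(\bar\gamma)$.

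Next, I would invoke Weyl's asymptotic law for the Neumann Laplacian on a bounded smooth domain $\Omega \subset \R^n$, which reads
\[
	\mathcal{N}(\tau) \;=\; \frac{\omega_n}{(2\pi)^n}\, |\Omega|\, \tau^{n/2} \;+\; o\!\left(\tau^{n/2}\right) \qquad \text{as } \tau \to +\infty,
\]
where $\omega_n$ denotes the volume of the unit ball in $\R^n$. Plugging $\tau = \bar\gamma$ and using the definition of the notation $\lesssim$ adopted in the Introduction yields precisely the stated inequality
\[
	\bar N \;\lesssim\; \frac{\omega_n}{(2\pi)^n}\, |\Omega|\, \bar\gamma^{\,n/2}
\]
in the limit regime $\bar\gamma \to +\infty$.

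The only subtlety worth highlighting is that Weyl's law gives an asymptotic, not a uniform bound, so the estimate is inherently of the form $A \leq B + o(B)$ rather than a sharp finite inequality; this is exactly the meaning the authors attached to the symbol $\lesssim$. No new analytic ingredient beyond Lemma \ref{lem max packs} and the standard Weyl asymptotic is needed, and there is no serious obstacle: once the reduction to the counting function is made, the statement is essentially a translation of a classical fact about the spectrum of the Neumann Laplacian.
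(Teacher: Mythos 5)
Your argument coincides with the paper's proof: both reduce the bound on $\bar N$ from Lemma \ref{lem max packs} to the Neumann eigenvalue counting function and then invoke Weyl's asymptotic law. The only (inconsequential) difference is that you describe $\omega_n$ as the volume of the unit ball, while the paper (somewhat loosely) calls it the volume of the unit sphere; the intended quantity is the same.
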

We recall that $A \lesssim B$ for $B \to +\infty$ means that $A \leq B + o(B)$ and $o(B)/ B \to 0$ as $B \to +\infty$. This estimate agrees with the intuition that doubling the size of the domain $\Omega$ would also double the number of groups of predators it can sustain.

\begin{proof}
For a fixed  smooth domain $\Omega \subset \R^n$, if we let $N(\gamma)$ stand for the number of eigenvalues for the Laplace operator with homogeneous Neumann boundary conditions in $\Omega$ which are less than $\gamma$, it can be shown that
\[
	N(\gamma) = \frac{\omega_n}{(2\pi)^n} |\Omega| \gamma^{n/2} + o(\gamma^{n/2}).
\]
Substituting this expression in the bound found in Lemma \ref{lem max packs} proves the statement.
\end{proof}

We can now extend the result in Lemma \ref{lem max packs} to the original competitive system. We show in particular
\begin{theorem}\label{thm max packs}
There $\bar \beta > 0$ such that if $\beta > \bar \beta$ and $\mathbf{v}_\beta = (w_{1,\beta}, \dots, w_{N,\beta}, u_{\beta})$ is a solution to \eqref{eqn model} then
\begin{itemize}
	\item either at most $\bar N$ components of $\mathbf{w}_\beta = (w_{1,\beta}, \dots, w_{N,\beta})$ are strictly positive and the others are zero;
	\item or the solution is such that
\[
	 \|(w_{1,\beta},\dots,w_{N,\beta})\|_{\C^{0,\alpha}(\Omega)} + \|u_\beta- \lambda/\mu\|_{\C^{2,\alpha}(\Omega)} = o_\beta(1)
\]
for every $\alpha \in (0,1)$.
\end{itemize}
\end{theorem}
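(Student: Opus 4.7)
My plan is to proceed by contradiction, combining the compactness of Proposition \ref{prp asymptotic k} with the partition bound of Lemma \ref{lem max packs}. Suppose the conclusion fails: there exists a sequence $\beta_n \to +\infty$ and non-negative solutions $\mathbf{v}_n = (w_{1,n}, \dots, w_{N,n}, u_n)$ of \eqref{eqn model k} for which at least $\bar N + 1$ components of $\mathbf{w}_n = (w_{1,n}, \dots, w_{N,n})$ are strictly positive, while $\|\mathbf{w}_n\|_{\C^{0,\alpha}(\Omega)} + \|u_n - \lambda/\mu\|_{\C^{2,\alpha}(\Omega)}$ does not go to zero. Invoking Proposition \ref{prp asymptotic k}, up to a subsequence, $\mathbf{v}_n$ converges in $\C^{0,\alpha} \times \C^{2,\alpha}$ to a limit $\mathbf{v}_\infty = (w_1, \dots, w_N, u)$ solving the segregated system \eqref{eqn segr model}, and by Lemma \ref{lem max packs} at most $\bar N$ of the $w_i$ are nontrivial in this limit. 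After relabeling, write $w_1, \dots, w_M$ for the nontrivial components, with $M \leq \bar N$.

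Consider first the degenerate case $M = 0$. Then $u$ solves $-D \Delta u = (\lambda - \mu u) u$ with Neumann data, so by Lemma \ref{lem max neumann} either $u \equiv 0$ or $u \equiv \lambda/\mu$. The possibility $u \equiv 0$ is ruled out exactly as at the start of Lemma \ref{lem asymp positive sol}: once $u_n$ is uniformly small, the inequality $-d_i \Delta w_{i,n} \leq -(\omega_i/2) w_{i,n}$ forces each $w_{i,n}$ to vanish identically, contradicting the existence of $\bar N + 1 \geq 1$ strictly positive components. The possibility $u \equiv \lambda/\mu$ directly contradicts our negation of the second alternative in the theorem.

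The substantive case is $M \geq 1$. Since strictly more than $\bar N \geq M$ components of $\mathbf{w}_n$ are positive on $\Omega$, at least one index $i_0 \in \{M+1, \dots, N\}$ satisfies $w_{i_0, n} > 0$ on $\Omega$ while $\eps_n := \|w_{i_0, n}\|_{L^\infty(\Omega)} \to 0$. Following the renormalization strategy of Lemmas \ref{lem comparable} and \ref{lem asymp positive sol}, define $\tilde w_n := w_{i_0, n}/\eps_n$, so that $0 < \tilde w_n \leq 1$ and
\[
-d_{i_0} \Delta \tilde w_n = \Big(-\omega_{i_0} + k_{i_0} u_n - \beta_n \sum_{j \neq i_0} a_{i_0 j} w_{j, n}\Big) \tilde w_n.
\]
Testing against $\tilde w_n$ and using $u_n \leq \lambda/\mu$ yields a uniform $H^1$-bound together with the crucial absorption estimate $\beta_n \int_\Omega \sum_{j \neq i_0} a_{i_0 j} w_{j, n} \tilde w_n^2 \leq C$. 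Along a further subsequence $\tilde w_n \to \tilde w_\infty$ weakly in $H^1$ and strongly in $L^2$, whence $\int_\Omega w_j \tilde w_\infty^2 = 0$ for every $j \leq M$. By the segregation statement of Proposition \ref{prp asymptotic k} (or by Mimura's theorem in the case $M = 1$) the sets $\{w_j > 0\}$, $j = 1,\dots,M$, cover $\Omega$ up to a codimension-one interface, so $\tilde w_\infty \equiv 0$ almost everywhere.

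The main obstacle I anticipate is promoting this $L^2$-vanishing of $\tilde w_\infty$ to a contradiction with the normalization $\|\tilde w_n\|_{L^\infty} = 1$, since $L^2$-convergence alone is insensitive to concentration. To close the gap I would apply the uniform H\"older estimates for strongly competing systems of \cite{ContiTerraciniVerzini_AdvMat_2005, CaffKarLin} to the augmented family $(w_{1,n}, \dots, w_{M,n}, \tilde w_n)$: this family still satisfies a competitive system of exactly the structure covered by those estimates, with coefficients bounded uniformly in $n$ (the factor $\eps_n$ has been absorbed into the renormalization), and the resulting equicontinuity upgrades the $L^2$-convergence to uniform convergence $\tilde w_n \to \tilde w_\infty \equiv 0$ on $\overline{\Omega}$. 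This contradicts $\max_\Omega \tilde w_n = 1$ and completes the argument.
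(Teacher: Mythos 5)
Your overall strategy closely mirrors the paper's: reduce by contradiction to a sequence $\beta_n\to\infty$, pass to the segregated limit via Proposition~\ref{prp asymptotic k}, invoke Lemma~\ref{lem max packs} on the limit, renormalize a superfluous component $\tilde w_n=w_{i_0,n}/\|w_{i_0,n}\|_{L^\infty}$, and combine the absorption estimate $\beta_n\int(\sum_j a_{i_0j}w_{j,n})\tilde w_n^2\leq C$ with the covering of $\Omega$ (up to the null interface $\mathcal N$) by the supports of the surviving $w_j$ to obtain $\tilde w_\infty\equiv 0$ a.e. You also correctly flag the remaining gap: $L^2$-vanishing of $\tilde w_\infty$ is consistent, a priori, with $\tilde w_n$ concentrating on $\mathcal N$ or on $\partial\Omega$, so $\|\tilde w_n\|_{L^\infty}=1$ is not yet contradicted.

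The proposed repair, however, does not hold up. The augmented family $(w_{1,n},\dots,w_{M,n},\tilde w_n)$ is \emph{not} a strongly competing system of the type covered by \cite{ContiTerraciniVerzini_AdvMat_2005,CaffKarLin}. The equation for $\tilde w_n$ does carry the expected term $-\beta_n a_{i_0 j}\,w_{j,n}\tilde w_n$, but the reciprocal term in the equation for $w_{j,n}$, once expressed in the renormalized variable, is $-\beta_n a_{ji_0}\,w_{i_0,n}w_{j,n}=-(\beta_n\eps_n)\,a_{ji_0}\,\tilde w_n\,w_{j,n}$. The two coupling coefficients therefore differ by the factor $\eps_n\to0$: the effective competition matrix of the renormalized system is badly asymmetric and degenerate, and neither it nor the discarded couplings $-\beta_n a_{jl}w_{l,n}w_{j,n}$ ($l>M$, $l\neq i_0$) satisfy the uniform-positivity/symmetry hypotheses under which those H\"older estimates are proved. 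What the paper does instead is purely scalar and sidesteps the system entirely: since $\tilde w_n\geq0$ and the competition potential is nonnegative, $\tilde w_n$ is a subsolution of $-d_{i_0}\Delta g+\omega_{i_0}g=k_{i_0}u_n\tilde w_n$ with Neumann data; by $0\leq\tilde w_n\leq g_n$ and $W^{2,p}$ elliptic estimates on $g_n$ one obtains the anti-concentration bound $1=\|\tilde w_n\|_{L^\infty}\leq\|g_n\|_{L^\infty}\leq C\|\tilde w_n\|_{L^p}$, which passes to the limit and gives $\|\tilde w_\infty\|_{L^p}\geq 1/C>0$ --- directly contradicting your $\tilde w_\infty\equiv0$. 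If you replace the H\"older step by this comparison estimate, your argument closes correctly and is essentially the paper's.
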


We recall that, by Proposition \ref{prp asymptotic k}, the set of solutions of \eqref{eqn model red} is pre-compact in $H^1(\Omega) \cap \C^{0,\alpha}(\overline{\Omega})$.

\begin{proof}
The statement will follow from some approximation results in combination with Lemma \ref{lem max packs}. First of all, we want to show that if $h > \bar N$ components of $\mathbf{w}_\beta$ are non zero and $\beta$ sufficiently large, then the solutions is close to the solutions $(0, \dots, 0, 0)$ or $(0, \dots, 0, \lambda/\mu)$. We have
\begin{claim} 
Let $\mathbf{v}_\beta = (\mathbf{w}_{\beta}, u_{\beta}) \in \mathcal{F}(\Omega)$ be a family of solutions to \eqref{eqn model k} and let us assume that there exists a solution $\mathbf{v} = (\mathbf{w}, u) = (w_1, \dots, w_N, u)$ with $(w_1, \dots, w_N) \in Lip(\overline{\Omega})$ and $u \in \C^{2,\alpha}(\overline \Omega)$ to \eqref{eqn segr model} with $h$ components of $\mathbf{w}$ non zero (with $1 \leq h \leq N$) such that $\mathbf{v}_\beta \to \mathbf{v}$ as $\beta \to +\infty$ in $H^1(\Omega) \cap \C^{0,\alpha}(\overline{\Omega})$. Then there exists $\bar \beta > 0$ sufficiently large such that the solution $\mathbf{v}_\beta$ has exactly $h$ components of $\mathbf{w}_\beta$ that are non zero for $\beta \geq \bar \beta$.
\end{claim}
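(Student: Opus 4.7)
The plan is to use the strong maximum principle for the easy direction and a blow-up/renormalization argument for the hard direction. First, applying the strong maximum principle and Hopf's lemma to the linear equation $-d_i \Delta w_{i,\beta} + q_\beta w_{i,\beta} = 0$ with $q_\beta = \omega_i - k_i u_\beta + \beta \sum_{j\neq i} a_{ij} w_{j,\beta} \in L^\infty(\Omega)$ together with the homogeneous Neumann boundary condition yields a basic dichotomy: for every $\beta > 0$ and every index $i$, either $w_{i,\beta} \equiv 0$ in $\overline{\Omega}$ or $w_{i,\beta} > 0$ in $\overline{\Omega}$.

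For the easy direction (components $i \leq h$ with $w_i \not\equiv 0$), I would pick $x_0 \in \overline{\Omega}$ where $w_i(x_0) > 0$; by the assumed uniform $\C^{0,\alpha}$ convergence one has $w_{i,\beta}(x_0) \geq w_i(x_0)/2 > 0$ for $\beta$ large, so by the dichotomy $w_{i,\beta}$ is positive throughout $\overline{\Omega}$. The hard direction is the converse: I would argue by contradiction and assume the existence of $i > h$ and a subsequence $\beta_n \to \infty$ with $w_{i,\beta_n} > 0$. Setting $M_n := \|w_{i,\beta_n}\|_{L^\infty(\Omega)}$, the convergence to $w_i \equiv 0$ forces $M_n \to 0$, so the normalized functions $\tilde w_n := w_{i,\beta_n}/M_n$ satisfy $\|\tilde w_n\|_{L^\infty} = 1$.

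Testing the equation for $\tilde w_n$ against itself yields
\[
\int_\Omega d_i|\nabla \tilde w_n|^2 + \beta_n \sum_{j \neq i} a_{ij} \int_\Omega w_{j,\beta_n} \tilde w_n^2 = \int_\Omega (-\omega_i + k_i u_{\beta_n}) \tilde w_n^2 \leq C,
\]
which gives a uniform $H^1$ bound on $\tilde w_n$ together with the competition estimate $\beta_n \int w_{j,\beta_n} \tilde w_n^2 \leq C$ for each $j \leq h$. The latter forces $\tilde w_n \to 0$ in $L^2$ on any compact subset of $\{w_j > 0\}$, since $w_{j,\beta_n}$ stays uniformly away from $0$ there. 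Viewing the augmented tuple $(w_{1,\beta_n},\dots,\tilde w_n,\dots,w_{N,\beta_n},u_{\beta_n})$ as a solution of a renormalized system of the form \eqref{eqn model k} (with the interactions involving $\tilde w_n$ rescaled by $M_n$), I would invoke the uniform H\"older bounds of Proposition \ref{prp asymptotic k} to upgrade to $\tilde w_n \to \tilde w$ in $\C^{0,\alpha}(\overline{\Omega})$. Since the value $1$ is attained by each $\tilde w_n$ at some $x_n \to x_\infty$, the limit satisfies $\tilde w(x_\infty) = 1$, hence $\tilde w \not\equiv 0$; and from the $L^2$ estimate above, $\tilde w$ is supported in $D_0 := \overline{\Omega} \setminus \bigcup_{j \leq h}\{w_j > 0\}$. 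Passing to the limit in the renormalized equation, $\tilde w$ solves $-d_i \Delta \tilde w = (-\omega_i + k_i u)\tilde w$ on $D_0$ with Dirichlet condition on the interior interface $\Omega \cap \partial D_0$ and Neumann condition on $\partial\Omega \cap \partial D_0$. Adjoining $\tilde w$ to $(w_1,\dots,w_h)$ therefore produces a solution of \eqref{eqn segr model} with $h+1$ nontrivial components in $\mathbf{w}$, contradicting the assumption that $\mathbf{v}$ has exactly $h$.

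The main obstacle is the hard direction, and specifically the need to verify that the renormalized sequence $\tilde w_n$ has a nontrivial limit. While the $H^1$ estimate and weak compactness come essentially for free from the energy identity, ruling out concentration on a set of vanishing measure (which would deliver $\tilde w \equiv 0$ in $L^p$ and collapse the contradiction) requires the full strength of the uniform H\"older estimates of Proposition \ref{prp asymptotic k}; adapting those estimates to the rescaled (and only partially competitive) system for the augmented tuple is the genuinely delicate point, as is verifying that $\tilde w$ together with $(w_1,\ldots,w_h)$ genuinely defines a solution of \eqref{eqn segr model}, in particular through the correct segregation and boundary behavior across $\partial D_0$.
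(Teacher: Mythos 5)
Your renormalization scheme (dividing the vanishing component $w_{i,\beta_n}$ by its sup norm, testing the equation to obtain an $H^1$ bound together with the control $\beta_n \int w_{j,\beta_n}\tilde w_n^2 \leq C$) is exactly the paper's starting point, and your observation that this forces the limit $\tilde w$ to vanish a.e.\ on $\bigcup_{j\leq h}\{w_j>0\}$ is the right ingredient. However, your concluding step is logically flawed. Adjoining $\tilde w$ to $(w_1,\dots,w_h)$ and calling the result a new $(h+1)$-component solution of \eqref{eqn segr model} does not contradict the hypothesis that $\mathbf{v}$ has exactly $h$ nontrivial components --- $\mathbf{v}$ is a fixed function, and finding a \emph{different} segregated solution with more components says nothing about $\mathbf{v}$. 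Moreover, the adjoined tuple is not obviously a solution of \eqref{eqn segr model}: the differential inequalities and the segregation/rectifiability structure would have to be reverified, and the component count $N$ in \eqref{eqn segr model} is fixed. The genuine contradiction, which you have all the pieces for, is measure-theoretic: $\tilde w$ is nontrivial (positive $L^p$ norm), yet it vanishes a.e.\ outside the nodal set $\mathcal{N}=\{\sum_{j\leq h} w_j = 0\}$, and $\mathcal{N}$ is a rectifiable set of co-dimension one, hence of Lebesgue measure zero. The paper makes this rigorous with the exhaustion sets $\Omega_{\eps,n} := \{\dist(x,\partial\Omega)\geq\eps,\ \inf_{m\geq n}\sum_j w_{j,\beta_m}\geq\eps\}$, which recover all of $\Omega\setminus\mathcal{N}$ and let one localize the competition integral so that $0 < c_i/2 \leq \int_{\Omega_{\bar\eps,n^*}} \bar w_{i,\beta_m}^2 \leq (\beta_m\bar\eps)^{-1}k_i\lambda|\Omega|/\mu \to 0$.

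A second, technical issue is your use of Proposition \ref{prp asymptotic k} to upgrade $\tilde w_n$ to $\C^{0,\alpha}$ convergence. After renormalization the augmented system is no longer of the symmetric form \eqref{eqn model k}: the coupling of $\tilde w_n$ with $w_{j,\beta_n}$ carries effective strength $\beta_n$, while the reverse coupling carries strength $\beta_n M_n$. The uniform H\"older estimates in the paper are proved for symmetric competition matrices $a_{ij}=a_{ji}$, and the authors explicitly flag the asymmetric case as harder and out of scope. The paper avoids this entirely: it obtains nontriviality of the $L^p$-weak limit $\bar w_i$ by comparison with the supersolution $g_{i,n}$ solving $-d_i\Delta g_{i,n}+\omega_i g_{i,n}=k_i u_{\beta_n}\bar w_{i,\beta_n}$, whose $W^{2,p}\hookrightarrow \C^{0,\alpha}$ regularity yields $1=\|\bar w_{i,\beta_n}\|_{L^\infty}\leq C\|\bar w_{i,\beta_n}\|_{L^p}$ and hence $\|\bar w_i\|_{L^p}>0$ without any uniform H\"older bound on the renormalized family. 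You should replace the appeal to Proposition \ref{prp asymptotic k} with this supersolution argument, and replace the adjoining argument with the measure-zero (or equivalently the $\Omega_{\eps,n}$) contradiction.
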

Let us first show how to use the claim in order to derive the first part of the theorem. Consider a family of solutions $(\mathbf{w}_{\beta}, u_{\beta}) \in \mathcal{F}(\Omega)$ of the system \eqref{eqn model k}, with $\beta \to +\infty$. By Lemma \ref{lem max packs} we already know that any solution of \eqref{eqn segr model} has at most $\bar N$ non trivial components among those of $\mathbf{w}_\beta$. Let us assume that $\mathbf{w}_\beta$ contains a sub-family (which we shall not relabel) with $\beta \to +\infty$ that has more than $\bar N$ non zero components. Proposition \ref{prp asymptotic k} implies that, up to a subsequence, $\mathbf{v}_\beta \to \mathbf{v}$ in $H^1(\Omega) \cap \C^{0,\alpha}(\overline{\Omega})$, where $\mathbf{v}$ solves the limit system \eqref{eqn segr model}, and thus has at most $\bar N$ components among those of $\mathbf{w}$, in contradiction with our claim.

We now prove the claim, arguing by contradiction and adopting the scheme of Lemma \ref{lem comparable}. Let $\mathbf{v}_n$ be any sequence satisfying the assumptions of the claim and let $\mathbf{v}$ be its limit for $\beta_n \to +\infty$. By the maximum principle, if the corresponding $\mathbf{w}$ has $h$ non trivial components, then necessarily $u$ is strictly positive. Up to a relabelling, we can assume that the first $h$ components $(w_1, \dots, w_h)$ are also non zero, while $w_{h+1} = \dots =  w_N = 0$. As a result, the sub-vector $(w_1, \dots, w_h, u)$ satisfies the conclusions of Proposition \ref{prp asymptotic k}, and, in particular, the set 
\[
	\mathcal{N}:= \left\{x \in \Omega : \sum_{i=1}^h w_i(x) = 0 \right\}
\]
is a rectifiable set of co-dimension 1, made of the union of a finite number of $\C^{1,\alpha}$ smooth sub-manifolds (points if $\Omega \subset \R$, curves if $\Omega \subset \R^2$, and in general embedded surfaces if $\Omega \subset \R^N$). For any $n \in \N$, we introduce the renormalized solution
\[
	 \mathbf{\bar v}_n := \left(\frac{w_{1,\beta_n}}{\|w_{1,\beta_n}\|_{L^\infty(\Omega)}}, \dots, \frac{w_{N,\beta_n}}{\|w_{N,\beta_n}\|_{L^\infty(\Omega)}}, u_{\beta_n}\right)
\]
which is well defined since, by assumption, for any $n$, and $1 \leq i \leq N$, $w_{i,n} > 0$. Let us observe that, since the first $h$ components of $\mathbf{v}_n$ converge to non zero functions as $\beta \to +\infty$, the $L^\infty$ norm of these components is bound from above and away from zero uniformly in $n$. That is, for the first $h$ components of $\mathbf{\bar v}_n$ we have
\[
	\bar{w}_{i,\beta_n} = C_{i,n} w_{i,\beta_n} \qquad \text{with} \quad 0 < C < C_{i,n} < C^{-1} \quad \forall 1\leq i \leq k, n \in \N.
\]
By the convergence of $\mathbf{w}_{\beta_n}$ we deduce that the first $h$ components of $\mathbf{\bar w}_{\beta_n}$ converge in $H^1(\Omega) \cap \C^{0,\alpha}(\overline{\Omega})$ to a non zero scaling of the first $h$ components of $\mathbf{\bar w}$.  Consequently $(\bar w_{1}, \dots, \bar w_h)$ and $(w_{1}, \dots, w_h)$ share the zero set $\mathcal{N}$.

Including the scaling in the system, it follows that $\mathbf{\bar v}_n$ is a solution of 
\begin{equation}\label{eqn model k scaled}
	\begin{cases}
		- d_i \Delta \bar w_{i,\beta_n}  = \left(- \omega_i + k_i u_{\beta_n} - \beta \sum_{j \neq i} a_{ij}\|w_{j,\beta_n}\|_{L^\infty(\Omega)}  w_{j,\beta_n}\right) \bar w_{i,\beta_n} \\
		- D \Delta u_{\beta_n} = \left(\lambda - \mu u_{\beta_n} - \sum_{i=1}^N k_i w_{i,\beta_n} \right)u_{\beta_n}\\
		\partial_\nu \bar w_{i,\beta_n} = \partial_\nu u_{\beta_n} = 0 &\text{ on $\partial \Omega$}
	\end{cases}
\end{equation}
We are chiefly interested in the equations satisfied by the densities $\bar w_{i,\beta_n}$ for $h+1\leq i \leq N$. The maximum principle implies that $u_n \leq \lambda / \mu$: testing the $i$-th equation with the density $\bar w_{i,\beta_n}$ itself and using its positivity, we find
\[
	\int_{\Omega} |\nabla \bar w_{i,\beta_n}|^2 \leq \frac{k_i}{d_i} \frac{\lambda}{\mu} |\Omega|.
\]
Since by definition $\|\bar w_{1,\beta_n}\|_{L^\infty(\Omega)} = 1$, $\bar w_{i,\beta_n}$ is uniformly bounded in $H^1(\Omega)$ and it admits a weak limit $\bar w_i \in H^1(\Omega)$. The compact embedding in $L^2(\Omega)$ and the boundedness in $L^\infty(\Omega)$ also imply that $\bar w_{i,\beta_n} \to \bar w_i$ strongly in $L^p(\Omega)$ for any $p < \infty$. Let us show that $\bar w_i$ is not zero: for each $h+1 \leq i \leq k$ (the other components are non trivial by assumption) we have that
\[
	\begin{cases}
		- d_i \Delta \bar w_{i,\beta_n} \leq \left(- \omega_i + k_i u_{\beta_n} \right) \bar w_{i,\beta_n} \\
		\partial_\nu \bar w_{i,\beta_n} = 0 &\text{ on $\partial \Omega$}.
	\end{cases}
\]
Let $g_{i,n} \in H^1(\Omega)$ be a solution to 
\[
	\begin{cases}
		- d_i \Delta g_{i,n} + \omega_i g_{i,n} = k_i u_{\beta_n} \bar w_{i,\beta_n} \\
		\partial_\nu g_{i,\beta_n} = 0 &\text{ on $\partial \Omega$}.
	\end{cases}
\]
By standard arguments we know that $0 \leq \bar w_{i,\beta_n} \leq g_{i,n}$ and that
\[
	\|g_{i,n}\|_{\C^{0,\alpha}(\Omega)} \leq C \|g_{i,n}\|_{W^{2,p}(\Omega)} \leq C \frac{k_i}{d_i} \frac{\lambda}{\mu} \|\bar w_{i,\beta_n} \|_{L^p(\Omega)}
\]
for any $N/2 < p < \infty$ and suitable $C$ and $\alpha > 0$. As a result, using the order relationship between $w_{i,\beta_n}$ and $g_{i,n}$, we have
\[
	1 =  \|\bar w_{i,\beta_n} \|_{L^\infty(\Omega)} \leq C \frac{k_i}{d_i} \frac{\lambda}{\mu} \|\bar w_{i,\beta_n} \|_{L^p(\Omega)}.
\]
Thus, the strong limit $\mathbf{\bar v}$ in $L^p(\Omega)$ of $\mathbf{\bar v}_n$ has all of its components which are non trivial. We are now in a position to reach the aimed contradiction. Let us consider the equation satisfied by $\bar w_{i,\beta}$ for $h+1 \leq i \leq k$: for simplicity, we scale back the first $h$ densities and find that
\[
	\begin{cases}
		- d_i \Delta \bar w_{i,\beta_n} \leq \left(- \omega_i + k_i u_{\beta_n} - \beta_n \sum_{1\leq j \leq h} w_{j,\beta_n}\right) \bar w_{i,\beta_n} \\
		\partial_\nu \bar w_{i,\beta_n} = 0 &\text{ on $\partial \Omega$}.
	\end{cases}
\]
From the previous discussion, $\bar w_{i,\beta_n} \to \bar w_{i}$ in $L^2(\Omega)$ and the limit is non trivial. We let
\[
	c_i := \int_{\Omega} \bar w_{i}^2 > 0.
\]
For any $\eps > 0$ and $n \in \N$, we consider the sets
\[
	\Omega_{\eps,n} := \left\{ x \in \Omega : \dist(x,\partial \Omega) \geq \eps \text{ and } \inf_{m \geq n} \left(\sum_{i=1}^h w_{i,\beta_m}(x)\right) \geq \eps \right\}.
\]
By the uniform convergence of $\mathbf{v}_{\beta_n}$ and the properties of its limit configuration we know that each $\Omega_{\eps,n}$ is closed and $\Omega_{\eps_1,n_1} \subseteq \Omega_{\eps_2,n_2}$ whenever $\eps_1 > \eps_2$ and $n_1 < n_2$. Finally,
\[
	\bigcup_{\eps > 0, n \in \N} \Omega_{\eps,n} = \Omega \setminus \mathcal{N}.
\]
As we have already recalled, $\mathcal{L}^N(\mathcal{N}) = 0$. Hence, it follows that for any $\delta > 0$, there exist $\bar \eps > 0 $ and $n^* \in \N$ such that $\mathcal{L}^N( \Omega \triangle \Omega_{\eps,n}) \leq \delta$ for $ 0 < \eps < \bar \eps$ and $n \geq n^*$ (here $A \triangle B$ is the symmetric difference of the sets $A$ and $B$). By the absolute continuity of the Lebesgue integral and the uniform integrability of converging sequences, there exists $\bar \delta > 0$ (and consequently $\bar \eps$ and $n^*$) such that
\[
	\int_{\Omega_{\bar \eps,n^*} } \bar w_{i,\beta_m}^2 \geq \frac{c_i}{2} > 0 \qquad \text{for $m \in \N$ sufficiently large}.
\]
On the other hand, testing the equation in $\bar w_{i,\beta_m}$ by $\bar w_{i,\beta_m}$ itself, we obtain
\[
	\int_{\Omega} \left[ d_i |\nabla \bar w_{i,\beta_m}|^2 + \omega_i \bar w_{i,\beta_m}^2 + \beta_m \left(\sum_{1\leq j \leq h} w_{j,\beta_n}\right) w_{i,\beta_m}^2 \right]
	\leq k_i \int_\Omega u_{\beta_m} w_{i,\beta_m}^2 \leq k_i \frac{\lambda}{\mu} |\Omega|.
\]
Since the terms of the left hand side are positive, we can localize the integral on the sets $\Omega_{\bar \eps,n^*}$ and find
\[
	\beta_m \inf_{\Omega_{\bar \eps, n^*}} \left(\sum_{1\leq j \leq h} w_{j,\beta_n}\right)  \int_{\Omega_{\bar \eps, n^*}} w_{i,\beta_m}^2 \leq k_i \frac{\lambda}{\mu} |\Omega|
\]
that is
\[
	0 < \frac{c_i}{2} < \int_{\Omega_{\bar \eps,n^*} } \bar w_{i,\beta_m}^2  \leq  \frac{1}{\beta_m \bar \eps} \cdot  k_i \frac{\lambda}{\mu} |\Omega|
\]
a contradiction when $\beta_m$ is sufficiently large, and this proves the first claim.

At this point, we have established that positive solutions must converge to either one of the solutions  $(0, \dots, 0, 0)$ or $(0, \dots, 0, \lambda/\mu)$. To conclude the proof, we show that they can only converge to the latter. For this, we can follow the same reasoning as for Lemma \ref{lem asymp positive sol}. That is, we suppose that $\mathbf{v}_n \to (0,\dots, 0,0)$ in $H^1(\Omega) \cap \C^{0,\alpha}(\overline{\Omega})$ and find that for $n$ large enough
\[
	\begin{cases}
		- d_i \Delta w_{i,\beta_n} = \left(- \omega_i + k_i u_{\beta_n} - \beta_n \sum_{ j \neq i} w_{j,\beta_n}\right) w_{i,\beta_n} \leq - \frac{\omega_i}{2} w_{i,\beta_n} \\
		\partial_\nu w_{i,\beta_n} = 0 &\text{ on $\partial \Omega$}
	\end{cases}
\]
which implies that each $w_{i,\beta_n}$ must be identically zero, in contradiction with our positivity assumption. This concludes the proof of Theorem \ref{thm max packs}.
\end{proof}

Combining the previous results, we can state the following
\begin{theorem}\label{thm maxim}
Let $\delta > 0$ and, for arbitrary $N \geq 1$, let us consider an arbitrary family of coefficients such that
\[
	\delta < d_1, \dots, d_N, \omega_1, \dots, \omega_N, k_1, \dots, k_N < \frac{1}{\delta}.
\]
For $\beta \geq 0$, let $\mathscr{S}$ be the set of solutions $(w_1, \dots, w_N, u) \in \mathcal{F}(\Omega)$ to \eqref{eqn model k} with any number of components and coefficients as above. For any $(w_1, \dots, w_N, u) \in \mathscr{S}$ we associated
\[
	P(w_1, \dots, w_N, u) = \int_{\Omega} \sum_{i=1}^N w_i.
\]
Then $\bar N \in \N$ for which we have two alternatives
\begin{itemize} 
	\item either there exists $(\bar w_1, \dots, \bar w_{\bar N}, \bar u) \in \mathscr{S}$ such that
	\[
		P(\bar w_1, \dots, \bar w_{\bar N}, \bar u) = \max_{N \geq 1, (w_1, \dots, w_N, u) \in \mathscr{S}} P(w_1, \dots, w_N, u);
	\]
	\item there exist a sequence $(\bar w_{1,n}, \dots, \bar w_{\bar N,n}, \bar u_n) \in \mathscr{S}$ and functions $(w_1, \dots, w_N) \in Lip(\overline{\Omega})$ and $u \in \C^{2,\alpha}(\overline \Omega)$ such that
	\begin{itemize}
		\item $(\bar w_{1,n}, \dots, \bar w_{\bar N,n}, \bar u_n)$ are solutions of \eqref{eqn model k} for $\beta_n \to +\infty$;
		\item $(w_{1,\beta}, \dots, w_{N,\beta}) \to (w_1, \dots, w_N)$ in $\C^{0,\alpha}\cap H^1(\overline \Omega)$, $u_\beta \to u$ in $\C^{2,\alpha}(\overline \Omega)$;
	\item $(w_1, \dots, w_N, u)$ solves \eqref{eqn segr model}  and
\[
	P(\bar w_1, \dots, \bar w_{\bar N}, \bar u) = \sup_{N \geq 1, (w_1, \dots, w_N, u) \in \mathscr{S}} P(w_1, \dots, w_N, u).
\]
	\end{itemize}
\end{itemize}
\end{theorem}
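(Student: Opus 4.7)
The plan is to study the supremum $P^\star := \sup_{\mathscr{S}} P$ by extracting from a maximizing sequence a subsequence whose behaviour is governed by the asymptotics of the associated competition parameters $\beta_n$. First I would establish that $P^\star$ is finite: testing each $w_i$-equation of \eqref{eqn model k} against the constant $1$ (legitimate by the Neumann condition) and summing in $i$ gives
\[
\sum_{i=1}^N \omega_i \int_\Omega w_i + \beta \sum_{i\ne j} a_{ij}\int_\Omega w_i w_j = \sum_{i=1}^N k_i \int_\Omega u\, w_i,
\]
while testing the $u$-equation yields $\int_\Omega u \sum_i k_i w_i = \lambda\int_\Omega u - \mu \int_\Omega u^2 \le \lambda^2|\Omega|/\mu$ after inserting the bound $u\le \lambda/\mu$ from Lemma \ref{lem reg ell} (easily extended to $N$ components). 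Discarding the non-negative $\beta$-term and exploiting $\omega_i > \delta$ gives the uniform control $P \le \lambda^2 |\Omega|/(\mu\delta)$, independently of $N$, $\beta$ and the coefficients.

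I then pick a maximizing sequence $(\mathbf{v}_n,\beta_n)$ with coefficients in the compact box $[\delta,1/\delta]^3$, discard trivial components and extract a subsequence so that $\beta_n\to\bar\beta\in[0,+\infty]$ and the coefficient arrays converge componentwise. If $\bar\beta = +\infty$, Theorem \ref{thm max packs} leaves two sub-cases. Either $P(\mathbf v_n)\to 0$, in which case $P^\star = 0$ is realised by the constant solution $(0,\dots,0,\lambda/\mu)$ and the first alternative holds trivially. Or $N_n \le \bar N$ for $n$ large, and then Proposition \ref{prp asymptotic k} provides a subsequence converging in $\C^{0,\alpha}\cap H^1$ for the $w_i$'s and in $\C^{2,\alpha}$ for $u$ to a limit $\bar{\mathbf v}$ solving \eqref{eqn segr model} on at most $\bar N$ components; since $P$ is continuous in this topology, $P(\bar{\mathbf v}) = P^\star$, which is the second alternative.

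When instead $\bar\beta<+\infty$ the aim is to produce an honest solution of \eqref{eqn model k} with the limit parameters attaining $P^\star$, i.e.\ the first alternative. Once $N_n$ is known to be bounded, the $\C^{2,\alpha}$ compactness of Proposition \ref{prp asymptotic k}, combined with coefficient convergence, allows passage to the limit in the nonlinear reaction terms and yields the desired maximizer $\bar{\mathbf v}\in\mathscr{S}$. The main obstacle lies precisely in bounding $N_n$ in this finite-$\beta$ regime, since the spectral partition argument of Lemma \ref{lem max packs} relies on segregation and is therefore unavailable. My plan to handle it is to extract a quantitative lower bound on the $L^1$-mass of any non-trivial component by combining Harnack's inequality (with constants uniform over the compact coefficient range) with the $L^\infty$-bounds on $u$ and on each $w_i$: the idea is that any non-trivial $w_{i,n}$ must satisfy $\int_\Omega w_{i,n}\ge c(\delta,\bar\beta,\Omega)>0$, whence the a priori estimate $\sum_i \omega_i\int w_{i,n}\le \lambda^2|\Omega|/\mu$ caps the number of surviving components. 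Establishing this lower bound—ruling out the persistence of non-trivial components whose mass collapses along the sequence without being absorbed by the dynamics of the others—is the step I expect to require the most care.
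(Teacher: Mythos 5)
The paper's own "proof" of this theorem is essentially a pointer to earlier results, so a direct comparison is not possible; your reconstruction is structurally the natural one and matches the paper's intent. Your a priori bound $P\le \lambda^2|\Omega|/(\mu\delta)$ is correct, and your treatment of the regime $\beta_n\to+\infty$ via Theorem~\ref{thm max packs} and Proposition~\ref{prp asymptotic k} is sound. (One small point there: $P^\star>0$ always, since the semi-trivial constant solution $(\tilde w_1,0,\dots,0,\tilde u)$ from Proposition~\ref{prop dock new} already gives $P>0$; hence the sub-case ``$P(\mathbf v_n)\to 0$'' cannot occur along a maximizing sequence, so the dichotomy of Theorem~\ref{thm max packs} forces $N_n\le\bar N$ directly.)

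The genuine gap is exactly where you flag it: the regime $\beta_n\to\bar\beta<+\infty$ with $N_n$ a priori unbounded. Lemma~\ref{lem max packs} is unavailable because its proof relies on $\{w_i>0\}$ forming a partition of $\Omega$, which is true only for the segregated limit system \eqref{eqn segr model}; for finite $\beta$ every nontrivial component is strictly positive on all of $\Omega$ by the strong maximum principle, so $\gamma_1(\Omega_i)=\gamma_1(\Omega)=0$ and the spectral-partition comparison of Theorem~\ref{thm eig bound hhot} gives nothing. Your proposed fix does not close this gap: Harnack's inequality for $-d_i\Delta w_i = V_i w_i$ with $\|V_i\|_\infty$ controlled (which you do have, since $\beta_n\sum_{j\ne i}w_{j,n}$ is uniformly bounded when $\bar\beta<\infty$) yields $\sup_\Omega w_i \le C\inf_\Omega w_i$, i.e.\ it controls the \emph{ratio} $\sup/\inf$, not the \emph{level} $\sup w_i$. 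There is no principle preventing a nontrivial component from having arbitrarily small $L^\infty$ norm — indeed, as the coefficients vary inside the box $[\delta,1/\delta]$ one generically crosses bifurcation thresholds at which arbitrarily small positive solutions to the $i$-th equation coexist with the rest of the configuration. So the claimed lower bound $\int_\Omega w_{i,n}\ge c(\delta,\bar\beta,\Omega)>0$ for every nontrivial component needs a genuinely different argument (or one must avoid bounding $N_n$ altogether, e.g.\ by ordering the components by mass, passing to a diagonal limit, and showing that the infinite tail is absorbed into a finite-component solution — itself a nontrivial step). To be fair, the paper's stated proof is equally silent on this point, but since you explicitly offer Harnack as the mechanism, you should be aware that it does not, on its own, do what you need.
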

We stress the fact that we have imposed no conditions on $\beta > 0$ and $\mu > 0$. The proof of this theorem in contained in the previous results.

\section{The question of optimal repartition of densities}\label{sec emergence}
We now show that, in some settings, the configuration that maximizes the total population of predators (that is, the solution mentioned in Theorem \ref{thm maxim}) contains more than one non zero component of $w_i$. 

To this end, we first consider a simplified version of the system \eqref{eqn model red}, viz.
\begin{equation}\label{eqn model k simple first}
	\begin{cases}
		- \Delta w_1  = \left(- \omega + k u - \beta w_2\right) w_1 \\
		- \Delta w_2  = \left(- \omega + k u - \beta w_1\right) w_2 \\
		- \Delta u = \left(\lambda - k w_1 - k w_2 \right)u\\
		\partial_\nu w_i = \partial_\nu u = 0 &\text{ on $\partial \Omega$}.
	\end{cases}
\end{equation}
We observe that we are considering here a system with indistinguishable densities of predators (all the characterizing parameters in the equations are the same for all groups). Furthermore, we assume here that the parameter $\mu$ vanishes. An extension of the result in the case $\mu > 0$ will be presented later. We recall (see Proposition \ref{prop dock new} and Definition \ref{def: simple sol}) that \eqref{eqn model k simple first} only admits as simple solutions (that is, solutions with only one of $w_1$ and $w_2$ non zero)  the constant solution:
\[
	(W,U) = \left(\frac{\lambda}{k},\frac{\omega}{k}\right) \implies \int_{\Omega} W = \frac{\lambda}{k} |\Omega|, \int_{\Omega} U = \frac{\omega}{k} |\Omega|
\]
Under these assumptions, we have
\begin{lemma}\label{lem: first res opt 2 or more}
Let $(w_1, w_2, u)$ be an arbitrary solution of \eqref{eqn model k simple first} with $\beta > 0$. If all of its components are non zero and non constant, then 
\[
	\int_{\Omega} \sum_{i}^{2} w_i = \frac{\lambda}{k} |\Omega| + \frac{1}{k}\int |\nabla \log u|^2 >  \frac{\lambda}{k} |\Omega| 
\]
and
\[
	\int_{\Omega} u = \frac{\omega}{k} |\Omega| + \frac{\beta}{\lambda} \int w_1 w_2 +  \frac{\omega}{k\lambda}\int |\nabla \log u|^2 >  \frac{\omega}{k} |\Omega|.
\]
\end{lemma}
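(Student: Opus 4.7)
The proof rests on two integration identities, one derived from the equation for $u$ and one from the equations for $w_1, w_2$. First, since $u$ is non-negative, non-zero, and solves an elliptic equation with Neumann boundary condition, the strong maximum principle gives $u > 0$ in $\overline{\Omega}$; similarly $w_1, w_2 > 0$ everywhere. In particular we may freely divide by $u$.

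For the first identity, I would divide the third equation of \eqref{eqn model k simple first} by $u$ and use the pointwise identity
\[
	\frac{\Delta u}{u} = \Delta \log u + |\nabla \log u|^2.
\]
This rewrites the equation as $-\Delta \log u - |\nabla \log u|^2 = \lambda - k(w_1+w_2)$. Integrating over $\Omega$, the term $\int_\Omega \Delta \log u$ vanishes by the Neumann condition (since $\partial_\nu u = 0$ and $u>0$ imply $\partial_\nu \log u = 0$), so
\[
	-\int_\Omega |\nabla \log u|^2 = \lambda |\Omega| - k \int_\Omega (w_1+w_2),
\]
which upon rearranging is the stated formula for $\int(w_1+w_2)$. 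The strict inequality follows because $u$ is non-constant, so $\nabla \log u \not\equiv 0$.

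For the second identity, I would sum the equations for $w_1$ and $w_2$ and integrate; the Laplacian term drops out by the Neumann condition, leaving
\[
	-\omega \int_\Omega (w_1+w_2) + k \int_\Omega u(w_1+w_2) - 2\beta \int_\Omega w_1 w_2 = 0.
\]
The mixed integral $\int u(w_1+w_2)$ is readily eliminated: simply integrating the equation for $u$ over $\Omega$ gives $k \int_\Omega u(w_1+w_2) = \lambda \int_\Omega u$. Substituting yields $\int_\Omega u$ as a linear combination of $\int(w_1+w_2)$ and $\int w_1 w_2$; inserting the first identity produces the claimed formula (up to a factor of 2 on the $\beta$-term that the proof will naturally reveal). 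The strict inequality is then automatic: $w_1, w_2 > 0$ gives $\int w_1 w_2 > 0$, and the $|\nabla \log u|^2$ contribution is positive as well.

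There is no real obstacle here: the whole proof is a short computation once one recognizes the logarithmic substitution, and the non-constant/positivity hypotheses are used only to guarantee strictness of the inequalities and the legitimacy of dividing by $u$. The only point requiring mild care is justifying strict positivity of all three components in order to apply the logarithmic identity and to assert $\int w_1 w_2 > 0$, but this is immediate from the strong maximum principle applied to each equation.
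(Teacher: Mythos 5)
Your proof is correct and follows the same route as the paper's: divide the $u$-equation by $u$ and integrate using the Neumann condition (the paper writes $\int_\Omega \Delta u/u = \int_\Omega|\nabla\log u|^2$ directly, which is exactly your logarithmic substitution), then sum the full system and integrate to eliminate $\int u(w_1+w_2)$. Your observation about the factor of $2$ is correct: summing the $w_i$-equations produces the term $-2\beta w_1 w_2$, so the direct computation yields $\lambda\int_\Omega u = \omega\int_\Omega(w_1+w_2) + 2\beta\int_\Omega w_1 w_2$, and the coefficient in the second stated identity should be $2\beta/\lambda$ rather than $\beta/\lambda$ (this typo does not affect the strict inequality).
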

\begin{remark}
Equivalently, we could have compared the solutions of \eqref{eqn model k simple first} with $\beta > 0$ with any non zero solution in the case $\beta = 0$ (see Lemma \ref{lem trivial branch of sol}).
\end{remark}

\begin{proof}
We recall that, owing to Lemma \ref{lem u constant}, if one component is constant, so are the others. Thus, we can assume that $u$ is not constant. The existence of non constant solutions is already known thanks to Theorems \ref{thm bif} and \ref{thm bif an}. We consider the equation in $u$. By the maximum principle, $u > 0$ and thus we can divide the two sides of the equation by $u$ and integrate over $\Omega$. This yields
\begin{equation}\label{eqn integral id mu}
	\int_{\Omega} \sum_{i}^{2} w_i = \frac{1}{k} \int \left(\lambda + \frac{\Delta u}{u}\right) = \frac{\lambda}{k} |\Omega| + \frac{1}{k}\int |\nabla \log u|^2 >  \frac{\lambda}{k} |\Omega|.
\end{equation}
Here, the strict inequality follows by the fact that $u$ is not a constant. Similarly, integrating directly the equations of the system, summing them and using the previous identity, we get
\[
	 \int_{\Omega} u = \frac{\omega}{k} |\Omega|  + \frac{\beta}{\lambda} \int w_1 w_2 + \frac{\omega}{k\lambda}\int |\nabla \log u|^2 >  \frac{\omega}{k} |\Omega|
\]
and this concludes the proof.
\end{proof}

As a result, according to system \eqref{eqn model k simple first}, for any $\beta > 0$, competition is always advantageous both for the predators and for the preys. Indeed, the total population of predators (and preys) is greater in the case of two groups competing for the same territory, than in the case of only one group. 

We now extend this result to the model \eqref{eqn model red}, that is when $\mu > 0$. To this end, let us first observe that the same computations as above yield the identity
\[
	\begin{split}
	\int_{\Omega} \sum_{i}^{2} w_i &= \frac{\lambda}{k} |\Omega| - \frac{\mu}{k} \int u + \frac{1}{k}\int |\nabla \log u|^2 \\
	&= \frac{\lambda k - \mu \omega}{k^2} |\Omega| + \frac{\mu}{k}\left( \frac{\omega}{k}|\Omega| - \int u\right) + \frac{1}{k}\int |\nabla \log u|^2
	\end{split}
\]
which, by uniform convergence of the densities as $\beta \to +\infty$, is valid also in the limit case of segregation $\beta = +\infty$. Unlike the case $\mu = 0$, a direct comparison of the previous formula with the case of only one population of predators is not immediate. Indeed, in general, we can show that the second term in the last expression is negative, that is $\int u > \frac{\omega}{k}|\Omega|$.

As a result, we need to carefully estimate the various contributions in this identity, in order to show that, when $\mu$ is sufficiently small, the total population of predators is large in the case of two non trivial components $w_1$ and $w_2$. 

We wish to emphasize that this is a rather delicate task. Indeed, an a priori estimate on the solutions which is independent of $\mu$ may not be true, for two main reasons.
\begin{itemize}
	\item From the equation in $u$, we can only say a priori that $u \leq \lambda / \mu$. If $\mu \to 0$, we have no reason to conclude that the solutions of \eqref{eqn model red} converge to solutions of \eqref{eqn model k simple first}.
	\item One may wonder whether the previous bound is not sharp and that it may be achieved only by ``spurious''  solutions such as $(0,0,\lambda/\mu)$. But this assertion is not true in general. To see this we can recall that, by Theorem \ref{thm bif}, non constant (as it were ``genuine'') solutions bifurcate from 
\[
	\left(\frac{\lambda k - \mu \omega}{\mu \beta + 2 k^2}, \frac{\lambda k - \mu \omega}{\mu \beta + 2 k^2}, \frac{\lambda \beta + 2 k \omega}{\mu \beta + 2 k^2}\right) \qquad \text{ for $\beta\frac{\lambda k - \mu \omega}{\mu \beta + 2 k^2} = \gamma_n$}
\]
where $\gamma_n$ is the $n$-th eigenvalue of the Laplace operator with Neumann boundary conditions. For $\mu$ sufficiently small and $\gamma_n$ large (and, consequently, $\beta$ large), we have non constant solutions for which $u$ is close (at least) in the uniform topology to the upper bound $\lambda / \mu$.
\end{itemize}

We thus focus on the one-dimensional case, for which (see Theorem \ref{thm bif one}) we have already established the existence of segregated solutions and pointed out their symmetries (Remark \ref{rem bif sim}). As a result, for $\Omega = (-a,a)$, $a > 0$ and $\mu > 0$ sufficiently small, we have a continuum of solutions such that $w_1 - w_2$ vanishes only for $x = 0$. Sending the competition parameter to infinity $\beta \to +\infty$, by Lemma \ref{lem asymp positive 1d} we can start by considering the system 
\begin{equation}\label{eqn model 2 mu 1D}
	\begin{cases}
		- w''  = \left(- \omega + k u\right) w \\
		- u'' = \left(\lambda - \mu u - k w \right)u &\text{ in $(0,a)$}\\
		w(0) = w'(a) = u'(0) = u'(a) = 0
	\end{cases}
\end{equation}
for $\mu > 0$, for which the identity \eqref{eqn integral id mu} reduces to
\[
	\int_{0}^{a} w = \frac{\lambda}{k} a - \frac{\mu}{k} \int_0^a u + \frac{1}{k}\int_0^a |( \log u )'|^2
\]
\begin{proposition}\label{prop stima int}
Let $(w,u)$ be any classical solution of \eqref{eqn model 2 mu 1D} with both components non negative and nontrivial. For $\mu > 0$ sufficiently small
\[
	\int_{0}^{a} w > \frac{\lambda}{k} a.
\]
\end{proposition}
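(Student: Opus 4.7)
The plan is to argue by contradiction using the integral identity
\[
k \int_0^a w = \lambda a - \mu \int_0^a u + \int_0^a |(\log u)'|^2
\]
already derived above. If the conclusion fails, there exist $\mu_n \downarrow 0$ and non-trivial non-negative solutions $(w_n,u_n)$ of \eqref{eqn model 2 mu 1D} with $\int_0^a w_n \le \lambda a/k$, or equivalently
\[
\int_0^a |(\log u_n)'|^2 \le \mu_n \int_0^a u_n.
\]
Since the maximum principle at the maximum of $u_n$ yields $\mu_n u_n \le \lambda$ pointwise, this already gives the $n$-independent bound $\|(\log u_n)'\|_{L^2(0,a)} \le \sqrt{\lambda a}$.

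The crucial step is to upgrade this gradient control into a two-sided pointwise bound on $u_n$ that is uniform in $\mu_n$. Since $w_n>0$ on $(0,a]$ solves $-w_n''+(\omega - k u_n)w_n=0$ with $w_n(0)=0$ and $w_n'(a)=0$, it is the first eigenfunction (with eigenvalue zero) of the associated Sturm--Liouville operator, so by the variational characterization of the first eigenvalue one has $\int_0^a (\varphi')^2 + \omega \int_0^a \varphi^2 \ge k \int_0^a u_n \varphi^2$ for every admissible $\varphi$. Plugging in $\varphi(x)=\sin(\pi x/(2a))$ (which vanishes at $0$) and using $\sin^2(\pi x/(2a))\ge 1/2$ on $[a/2,a]$ gives the $n$-independent bound
\[
\int_{a/2}^a u_n \le \frac{\pi^2}{4ak} + \frac{\omega a}{k}.
\]
The mean value theorem provides $x_n\in[a/2,a]$ where $u_n(x_n)$ is uniformly bounded, and combining this with $|\log u_n(x)-\log u_n(x_n)| \le \sqrt{a}\,\|(\log u_n)'\|_{L^2} \le a\sqrt{\lambda}$ produces $u_n\le C_1$ uniformly. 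The matching lower bound $u_n\ge c_0>0$ follows from the observation that at the maximum point $y_n$ of $w_n$ the equation forces $u_n(y_n)\ge\omega/k$, once more combined with the controlled oscillation of $\log u_n$.

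With $u_n$ trapped in $[c_0, C_1]$, we now have $\mu_n\int_0^a u_n \to 0$, so $(\log u_n)'\to 0$ in $L^2$; hence $u_n\to u_\infty$ uniformly, with $u_\infty$ a positive constant. A uniform bound on $w_n$ follows easily: normalising $\tilde w_n:=w_n/\|w_n\|_\infty$ to the first eigenfunction of a uniformly elliptic Sturm--Liouville operator yields $C^{2,\alpha}$-compactness, and the $L^1$-bound $\int w_n \le \lambda a/k$ controls the amplitude. Elliptic regularity then allows passage to the limit in $C^2([0,a])$: $w_n\to w_\infty\ge 0$ with $w_\infty(0)=0$, $w_\infty'(a)=0$, $-w_\infty''=(-\omega + k u_\infty)w_\infty$. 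On the other hand, passing to the limit in the equation for $u_n$, using $u_n''\to u_\infty''=0$ and $\mu_n u_n\to 0$, gives $(\lambda - k w_\infty)u_\infty = 0$; since $u_\infty>0$, this forces $w_\infty\equiv \lambda/k>0$, contradicting $w_\infty(0)=0$.

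The principal obstacle is precisely the uniform $L^\infty$ bound on $u_n$: the trivial a priori bound $u_n\le \lambda/\mu_n$ degenerates as $\mu_n\to 0$, and standard energy or maximum-principle arguments do not improve it. The key insight is that positivity of $w_n$ characterizes $u_n$ variationally through Sturm--Liouville theory, yielding a $\mu$-independent $L^1$ bound of $u_n$ on a fixed subinterval, which combines with the $H^1$ control on $\log u_n$ forced by the contradiction hypothesis to close the estimate.
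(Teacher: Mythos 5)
Your proof is correct, and it takes a genuinely different route from the paper's. The paper establishes two separate unconditional lemmas, namely that $\mu\int_0^a u\to 0$ as $\mu\to 0$ (via a lengthy four-step blow-up/rescaling argument), and that $\int_0^a|(\log u)'|^2$ is bounded away from zero (via a separate compactness argument); these are then combined with the integral identity
\[
\int_0^a w=\frac{\lambda}{k}a-\frac{\mu}{k}\int_0^a u+\frac{1}{k}\int_0^a|(\log u)'|^2.
\]
You instead argue by contradiction directly with the claimed inequality, so that the identity gives $\int_0^a|(\log u_n)'|^2\le\mu_n\int_0^a u_n$ for free, which combined with the crude bound $u_n\le\lambda/\mu_n$ already controls $(\log u_n)'$ in $L^2$ uniformly. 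Your genuinely new ingredient is the observation that positivity of $w_n$ makes it the principal eigenfunction of $-\partial_x^2+(\omega-ku_n)$ with the mixed boundary conditions, and the variational inequality with the explicit test function $\sin(\pi x/(2a))$ yields a $\mu$-independent bound on $\int_{a/2}^a u_n$; together with the $H^1$ control on $\log u_n$ and the pointwise information $u_n(y_n)\ge\omega/k$ at the maximum of $w_n$, this traps $u_n$ in a fixed compact interval $[c_0,C_1]$. The rest (convergence to constants and the impossibility of $(\lambda-kw_\infty)u_\infty=0$ with $w_\infty(0)=0$) is the same kind of limit analysis the paper uses for the second lemma. Your route is shorter and more elementary: the Sturm--Liouville characterization of $w_n$ replaces the paper's entire first lemma with its multi-step rescaling argument. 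A couple of the steps you state tersely, namely $w_n>0$ on $(0,a]$ (strong maximum principle plus ODE uniqueness at $x=a$), and the $C^2$ passage to the limit for $w_n$ from the $L^1$ amplitude bound via $\tilde w_n=w_n/\|w_n\|_\infty$, are standard and fill in without difficulty.
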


Let us observe that, since the solutions of \eqref{eqn model red} converge for $\beta \to +\infty$ to segregated solutions, the previous result implies an improvement of Theorem \ref{thm maxim}, and in particular we have
\begin{theorem}\label{2 is better}
Under the assumptions of Theorem \ref{thm maxim}, let us assume moreover that the coefficients in \eqref{eqn model k} do not depend on the index $i$ and that $\Omega = (a, b) \subset \R$. If $\mu > 0$ sufficiently small the solution of \eqref{eqn model k}  that maximizes 
\[
	P(w_1, w_2, \dots, w_N, u) = \int_{\Omega} \sum_{i=1}^N w_i
\]
has at least $N \geq 2$ non trivial components and $\beta > 0$.
\end{theorem}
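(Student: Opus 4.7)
The plan is to exhibit, for $\mu>0$ small, a two-species segregated configuration of \eqref{eqn model k} whose value of $P$ strictly exceeds the largest value attainable with $N=1$ or $\beta=0$, and then invoke Theorem \ref{thm maxim} to conclude. The decisive estimate is Proposition \ref{prop stima int}, applied on half of $\Omega$.

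The trivial competitors are easily controlled. If only one predator density is non-zero, Proposition \ref{prop dock new} forces the solution to be the constant $(w,u)=((\lambda k-\mu\omega)/k^2,\omega/k)$, so $P=\frac{\lambda k-\mu\omega}{k^2}|\Omega|$. If $\beta=0$, Lemma \ref{lem trivial branch of sol} gives every non-trivial solution as constant with $\sum_i w_i=(\lambda k-\mu\omega)/k^2$, so again $P\le\frac{\lambda k-\mu\omega}{k^2}|\Omega|$. Both values are strictly smaller than $\frac{\lambda}{k}|\Omega|$.

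To beat $\frac{\lambda}{k}|\Omega|$ I would follow the first bifurcation branch produced by Theorem \ref{thm bif one}. Write $\Omega=(a_0,b_0)$, $L=b_0-a_0$, midpoint $m=(a_0+b_0)/2$, reflection $\sigma(x)=a_0+b_0-x$. The involution $S\colon(w_1,w_2,u)\mapsto(w_2\circ\sigma,w_1\circ\sigma,u\circ\sigma)$ is a symmetry of \eqref{eqn model red}. At the first positive Neumann eigenvalue $\gamma_1=(\pi/L)^2$, the bifurcating direction $(\psi_1,-\psi_1,0)$ with $\psi_1(x)=\cos(\pi(x-a_0)/L)$ is $S$-invariant, since $\psi_1\circ\sigma=-\psi_1$. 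By the implicit function theorem applied within the $S$-fixed subspace, the branch $\mathscr C_1$ consists, near the bifurcation, of $S$-invariant solutions, and analytic continuation via Theorem \ref{thm bif an} preserves this invariance along $\mathfrak C_1$. Theorem \ref{thm bif one} furnishes an unbounded-in-$\beta$ sequence of such solutions $(w_{1,n},w_{2,n},u_n)$; by Lemma \ref{lem asymp positive 1d} and the ``no re-collapse'' remark that follows it, this sequence converges to a segregated limit $(w_1,w_2,u)$ with both $w_1,w_2$ non-trivial. $S$-invariance forces $w_1$ to be supported in $(a_0,m)$, $w_2$ in $(m,b_0)$, and $u$ to be symmetric about $m$, so in particular $u'(m)=0$.

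Mapping the left half to $(0,L/2)$ via $\tilde x=m-x$, the pair $(w_1,u)$ becomes a solution of \eqref{eqn model 2 mu 1D} with parameter $a=L/2$, both components non-negative and non-trivial. Proposition \ref{prop stima int} then yields, for $\mu$ small,
\begin{equation*}
\int_{a_0}^{m} w_1 > \frac{\lambda}{k}\cdot\frac{L}{2},
\end{equation*}
and by $S$-invariance the analogous bound holds for $\int_m^{b_0}w_2$. Summing gives $P>\frac{\lambda}{k}|\Omega|$ in the limit, and by continuity of $P$ along the branch (Proposition \ref{prp asymptotic k}) the same strict inequality holds on two-species solutions of \eqref{eqn model k} with $\beta$ large but finite. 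Hence $\sup_{\mathscr S}P>\frac{\lambda}{k}|\Omega|>\frac{\lambda k-\mu\omega}{k^2}|\Omega|$, which excludes both $N=1$ and $\beta=0$ from realizing the supremum, and the maximizer furnished by Theorem \ref{thm maxim} must therefore satisfy $\beta>0$ and carry at least two non-trivial components. The main obstacle is the non-degeneracy of the limit along the $S$-invariant subbranch: one must rule out both collapse onto $\mathscr S_0$ (handled by the ``no re-collapse'' remark after Lemma \ref{lem asymp positive 1d}) and reduction to a one-species configuration (prevented by the preservation of the zero count of $w_1-w_2$ on $\mathscr C_1$ established in Theorem \ref{thm bif one}).
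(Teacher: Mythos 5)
Your proposal is correct and follows essentially the same route as the paper. The paper reduces to the half-interval problem \eqref{eqn model 2 mu 1D} by invoking the one-dimensional bifurcation structure (Theorem \ref{thm bif one}, Remark \ref{rem bif sim}, Lemma \ref{lem asymp positive 1d}) and the symmetry of the eigenfunctions, then applies Proposition \ref{prop stima int}, and compares against the single-predator constant value $\frac{\lambda k-\mu\omega}{k^2}|\Omega|$ -- exactly your plan. You spell out in more detail the $S$-invariance of the first branch and the passage to the segregated limit, which the paper leaves somewhat implicit via the references in Remark \ref{rem bif sim}; this extra detail is harmless and arguably clarifying, but it is not a different method.
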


We divide the proof of Theorem \ref{2 is better} in two separate results. Letting all the parameters in \eqref{eqn model 2 mu 1D} fixed a part from $\mu > 0$, we have
\begin{lemma}
Let $(w,u)$ be an arbitrary classical solution of \eqref{eqn model 2 mu 1D} with both components non negative and nion zero. For any $\eps > 0$ there exists $\bar \mu > 0$ such that
\[
	 \mu \int_0^a u \leq \eps \qquad \text{if $\mu \in (0,\bar\mu)$}.
\]
\end{lemma}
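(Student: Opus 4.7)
I argue by contradiction: suppose there exist $\eps_0 > 0$, a sequence $\mu_n \downarrow 0$, and classical solutions $(w_n, u_n)$ of \eqref{eqn model 2 mu 1D} with $w_n, u_n \geq 0$ and both nontrivial, such that $I_n := \mu_n \int_0^a u_n \geq \eps_0$. The strategy is to show that the rescaled prey density $U_n := \mu_n u_n$, which lies in $[0, \lambda]$, must saturate uniformly to the value $\lambda$ on $[0,a]$; an eigenvalue argument will then contradict the existence of the nontrivial nonnegative predator density $w_n$.

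\textbf{Step 1 (Compactness of $U_n$).} The function $U_n$ satisfies $-U_n'' = (\lambda - U_n - k w_n) U_n$ with $U_n'(0) = U_n'(a) = 0$, and the maximum principle applied to the $u$-equation gives $0 \leq U_n \leq \lambda$. Integrating the $u$-equation yields $k\int u_n w_n = \lambda \int u_n - \mu_n \int u_n^2 \leq \lambda \int u_n$, whence
\[
  \int_0^a U_n w_n = \mu_n \int_0^a u_n w_n \leq \lambda I_n / k \leq \lambda^2 a/k.
\]
Therefore $\int_0^a |U_n''| \leq \lambda \int U_n + k \int w_n U_n \leq 2\lambda^2 a$, and combined with $U_n'(0) = 0$ this yields a uniform Lipschitz bound on $U_n$. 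By Arzel\`a--Ascoli, up to a subsequence $U_n \to U_\infty$ uniformly, with $U_\infty \in \mathrm{Lip}([0,a])$, $0 \leq U_\infty \leq \lambda$, and $\int_0^a U_\infty \geq \eps_0$.

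\textbf{Step 2 (Phase-plane identification).} On any compact $K \subset \{U_\infty > \delta\} \cap (0,a)$ one has $U_n \geq \delta/2$ eventually, so $\int_K w_n \leq (2/\delta) \int_K U_n w_n$ is uniformly bounded. Testing the $w_n$-equation against $\phi \in C_c^\infty(\{U_\infty > 0\} \cap (0,a))$ gives $k \int u_n w_n \phi = -\int w_n \phi'' + \omega \int w_n \phi$, which is uniformly bounded; multiplying by $\mu_n$ produces $\int U_n w_n \phi = O(\mu_n) \to 0$. Passing to the limit in the $U_n$-equation then yields
\[
  -U_\infty'' = (\lambda - U_\infty) U_\infty \quad \text{on } \{U_\infty > 0\}.
\]
The first integral $(U')^2 = -\lambda U^2 + 2U^3/3 + C$ of this ODE, through an elementary phase-plane inspection, shows that the only orbits confined to $U \in [0,\lambda]$ are the two equilibria $U \equiv 0$ and $U \equiv \lambda$; every non-constant orbit attains a maximum strictly greater than $3\lambda/2$. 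Hence on each connected component of $\{U_\infty > 0\}$ one has $U_\infty \equiv \lambda$, and continuity together with the connectedness of $[0,a]$ force $U_\infty \equiv 0$ or $U_\infty \equiv \lambda$. The mass constraint $\int U_\infty \geq \eps_0$ selects $U_\infty \equiv \lambda$.

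\textbf{Step 3 (Eigenvalue contradiction).} From Step 2, $u_n = U_n/\mu_n \geq \lambda/(2\mu_n)$ uniformly on $[0,a]$ for $n$ large. Since $w_n \geq 0$ is nontrivial and solves $-w_n'' + (\omega - k u_n) w_n = 0$ with $w_n(0) = 0 = w_n'(a)$, the standard ground-state principle (any nonnegative eigenfunction of a regular Sturm--Liouville problem corresponds to its simple first eigenvalue) forces the principal eigenvalue $\lambda_1$ of $-d^2/dx^2 + (\omega - k u_n)$ under these mixed Dirichlet--Neumann conditions to equal $0$. Yet evaluating the Rayleigh quotient on the admissible test function $\phi(x) = \sin(\pi x/(2a))$ gives
\[
  \lambda_1 \leq \left(\frac{\pi}{2a}\right)^2 + \omega - \frac{k\lambda}{2\mu_n} \longrightarrow -\infty,
\]
the required contradiction. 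The hardest part is Step 2: extracting the local convergence $w_n U_n \to 0$ on $\{U_\infty > 0\}$ despite the absence of any uniform $L^\infty$ bound on $w_n$, and then exploiting the rigidity of the ODE phase plane to force $U_\infty$ into the dichotomy $\{0,\lambda\}$.
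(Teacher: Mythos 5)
Your overall strategy --- normalizing $U_n := \mu_n u_n \in [0,\lambda]$, extracting a Lipschitz uniform limit $U_\infty$ with $\int_0^a U_\infty \ge \eps_0$ via the $L^1$ bound on $U_n''$, and then deriving a Rayleigh-quotient contradiction from the $w$-equation --- is genuinely different from the paper's. The paper instead normalizes $\bar u_n = u_n / (\tfrac1a\int u_n)$ together with $\bar w_n = w_n/\|w_n'\|_{L^2}$ and runs a four-step case analysis on the relative sizes of the resulting coefficients $k_n$ and $k_n'$. Your normalization is arguably cleaner and Steps~1 and~3 are sound in spirit. Step~2, however, contains a genuine error.

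The phase-plane claim is false. The ODE $-U'' = (\lambda-U)U$ has the first integral $(U')^2 + \lambda U^2 - \tfrac{2}{3}U^3 = C$, and $(U,U')=(0,0)$ is a \emph{center}: for every $C \in (0,\tfrac{\lambda^3}{3})$ there is a closed orbit around it whose maximal $U$-value is some $U_{\max}\in(0,\lambda)$. The part of that orbit with $U\ge 0$ yields, on a finite interval $(\alpha,\beta)$, a non-constant solution with $U(\alpha)=U(\beta)=0$ and $0<U\le U_{\max}<\lambda$ in between --- confined to $[0,\lambda]$ but certainly not reaching $3\lambda/2$ (that threshold is the turning point of the special $C=0$ orbit, whose range is $\{0\}\cup[3\lambda/2,\infty)$ and which plays no role here). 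Since each connected component of $\{U_\infty>0\}$ is only a subinterval, not all of $\R$, such arcs cannot be excluded, so you may not conclude $U_\infty\equiv\lambda$; and the Neumann conditions for $U_\infty$ at $0$ and $a$, which might otherwise help, do not pass to the limit under a mere $L^1$ bound on $U_n''$. Step~3 as written relies on the global lower bound $u_n\ge\lambda/(2\mu_n)$ on all of $[0,a]$ and therefore inherits this gap.

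The good news is that Step~2 is not needed. Step~1 already gives a continuous $U_\infty\ge 0$ with $\int_0^a U_\infty\ge\eps_0$, hence a subinterval $I\Subset(0,a)$ and $\delta>0$ with $U_\infty\ge 2\delta$ on $I$, so $u_n\ge\delta/\mu_n$ on $I$ for $n$ large. Running the Step~3 Rayleigh-quotient argument with a fixed $\phi\in C_c^\infty(I)$ in place of $\sin(\pi x/(2a))$ gives
\[
0=\lambda_1 \ \le\ \frac{\int_0^a(\phi')^2}{\int_0^a\phi^2}+\omega-\frac{k\delta}{\mu_n}\ \longrightarrow\ -\infty,
\]
the desired contradiction. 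With this modification your proof is correct, and indeed shorter than the one in the paper.
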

\begin{proof}
Let $(w_n, u_n)$ be a sequence of positive solutions to \eqref{eqn model 2 mu 1D} with $\mu = \mu_n \to 0$, and let us assume, by contradiction, that
\[
	\mu_n \int_0^a u_n > C  > 0.
\] 
The maximum principle, as already observed, implies that $u_n \leq \lambda / \mu_n$, hence
\[
	\mu_n \int_0^a u_n \leq a \lambda.
\]
Thus, we can assume that, up to a subsequence,
\[
	\lim_{n \to +\infty} \mu_n \int_0^a u_n = \gamma \in (0, a \lambda]
\]
for some constant $\gamma>0$. We can introduce the scaled functions $(\bar w_n, \bar u_n)$ as
\[
	\bar w_n := \left(\int_0^a (w_n')^2\right)^{-1/2} w_n, \quad \bar u_n := \left(\frac{1}{a} \int_0^a u_n \right)^{-1} u_n,
\]
which are solutions to
\begin{equation}\label{eqn model 2 mu 1D scaled}
	\begin{cases}
		- \bar w_n''  = \left(- \omega + k_n \bar u_n\right) \bar w_n \\
		- \bar u_n'' = \left(\lambda - \bar \mu_n \bar u_n - k_n' \bar w_n \right) \bar u_n &\text{ in $(0,a)$}\\
		 \bar w_n(0) =  \bar w_n'(a) = \bar u_n'(0) = \bar u_n'(a) = 0
	\end{cases}
\end{equation}
where we have defined
\[
	k_n := k \frac{1}{a} \int_0^a u_n,\quad \bar \mu_n := \mu_n \frac{1}{a} \int_0^a u_n, \quad k_n' := k \left(\int_0^a (w_n')^2\right)^{1/2}.
\]
We observe that, from the starting assumption, 
\[
	k_n \to +\infty \qquad \text{and} \quad  \bar \mu_n \to \bar \mu_\infty \in (0,\lambda),
\]
while we have no information on $k'_n$. Moreover, by definition and the Dirichlet boundary condition at zero, the sequence $\{\bar w_n\}_{n\in\N}$ is bounded in $H^1(0,a)$, and by positivity, also $\{\bar u_n\}_{n \in \N}$ is bounded in $H^1(0,a)$; Indeed, by testing the equation in $\bar u_n$ with $\bar u_n$ itself, we obtain
\[
	\int_{0}^a (\bar u_n')^2 + \mu_n \bar u_n^3 \leq \lambda \int_0^a \bar u_n^2
\]
and the claim follows from the assumption $\bar \mu_n \to C > 0$. By the embedding theorems we have that, up to a subsequence, both $\{\bar w_n\}_{n\in\N}$ and $\{\bar u_n\}_{n \in \N}$ converge uniformly in $(0,a)$ to their respective weak limits in the space $H^1(0,a)$, $\bar w_\infty$ and $\bar u_\infty$. Moreover, by renormalization and strong convergence, we have
\[
	\int_0^a \bar u_\infty = a
\]
and thus, in particular, $\bar u_\infty$ is non zero. Finally, from the equation in $\bar w_n$ we see that
\[
	k_n \int_0^a \bar u_n \bar w_n^2 = \int_0^a (\bar w_n')^2 + \omega \bar w_n^2 \leq C'.
\]
Since $k_n \to +\infty$, by the uniform convergence we have that
\[
	\bar u_n \bar w_n \to \bar u_\infty \bar w_\infty \equiv 0 \qquad \text{uniformly in $(0,a)$}.
\]

\noindent\textbf{Step 1)} We now proceed and exclude the possibility that the sequence $k'_n$ is bounded. By the uniform convergence we have
\[
	k'_n \bar u_n \bar w_n \to 0 \qquad \text{uniformly in $(0,a)$}.
\]
Passing to the limit in the equation satisfied by $\bar u_n$, it follows that $\bar u_\infty$ is a non trivial solution of
\begin{equation}\label{eqn u infty}
	\begin{cases}
		- \bar u_\infty'' = \left(\lambda - \bar  \mu_\infty \bar u_\infty \right) \bar u_\infty &\text{ in $(0,a)$}\\
		 \bar u_\infty'(0) = \bar u_\infty'(a) = 0.
	\end{cases}
\end{equation}
From Lemma \ref{lem max neumann}, $\bar u_\infty$ can thus be only the constant $\lambda / \bar \mu_\infty$, and finally, thanks to renormalization, $\bar \mu_\infty = \lambda$ and $\bar u_\infty \equiv 1$. By the uniform convergence of $\bar u_n$ to its limit, for $n$ sufficiently large we find
\[
	\bar u_n > \frac{1}{2} \qquad \text{in $(0,a)$}.
\]
Using this estimate in the equation for $\bar w_n$, we get that
\[
	\begin{cases}
		- \bar w_n''  = \left(- \omega + k_n \bar u_n\right) \bar w_n > \left(- \omega + k_n/2 \right) \bar w_n  \\
		 \bar w_n(0) =  \bar w_n'(a) = 0.
	\end{cases}	
\]
Since $k_n \to +\infty$, for $n$ large enough we find
\[
	\left(- \omega + k_n/2 \right) > \left(\frac{\pi}{2a}\right)^2.
\]
The right hand side is the principal eigenvalue of the operator. By the comparison principle we see that $\bar w_n \equiv 0$, against the assumption that the functions $(w_n,u_n)$ are positive in $(0,a)$. This possibility is thus ruled out.

\noindent\textbf{Step 2)} Hence it must be the case that $k'_n \to +\infty$: similarly we can show that $k_n' \geq C k_n$. Indeed, let us assume by contradiction that $k_n' / k_n \to 0$. Testing the equation in $\bar w_n$ with $\varphi$ smooth and compactly supported, we find
\[
	k_n \int_0^a \bar u_n \bar w_n \varphi = \int_a^a \left( \bar w_n' \varphi + \omega \bar w_n \varphi \right)
\]
and the right-hand side is bounded in $n$, so that
\[
	k'_n \int_0^a \bar u_n \bar w_n \varphi =  \frac{k'_n}{k_n} k_n \int_0^a \bar u_n \bar w_n \varphi  \to 0
\]
for all test function $\varphi$. It follows that the weak and uniform limit  $\bar u_\infty$ of $\bar u_n$ solves again \eqref{eqn u infty}, and thus
\[
	\bar u_n \to 1 \qquad \text{uniformly in $(0,a)$}.
\]
We can then reach a contradiction as in Step 1).

\noindent\textbf{Step 3)} We now show that $k_n \geq C k'_n$. Again by contradiction, let us assume that $k_n / k'_n \to 0$. Integrating the equation in $\bar u_n$ we obtain
\[
	k'_n\int_0^a \bar u_n \bar w_n = \int_0^a \bar u_n \left( \lambda -\mu_n \bar u_n \right)
\]
and the right-hand side is bounded uniformly in $n$. It follows that
\[
	k_n\int_0^a \bar u_n \bar w_n = \frac{k_n}{k_n'} k_n' \int_0^a \bar u_n \bar w_n  \to 0.
\]
But then, testing the equation in $\bar w_n$ with $\bar w_n$ itself, we obtain, thanks to the renormalization of $\bar w_n$ and the uniform convergence,
\[
	 0 < C' < \int_0^a (\bar w_n')^2 + \omega \bar w_n^2 = k_n \int_0^a \bar u_n \bar w_n^2 \leq \|\bar w_n \|_{L^\infty} \cdot k_n \int_0^a \bar u_n \bar w_n \to 0
\]
a contradiction.

\noindent\textbf{Step 4)} In summary, so far we have shown that
\[
	k'_n \sim k_n \to +\infty.
\]
We already know that, up to a subsequence, the sequence  $\{\bar w_n\}_{n \in \N}$ converges uniformly to continuous function $\bar w_\infty$. We now show that $\bar w_\infty \equiv 0$. We argue by contradiction and assume that this is not the case. Then there exist $0 \leq x_0 < x_1 \leq a$ such that
\[
	\inf_{x \in [x_0,x_1]} \bar w_n > C > 0 \qquad \text{for all $n$ sufficiently large}
\]
Then for some positive constants $A, B$ and $n$ sufficiently large, we have
\[
	\begin{cases}
		 \bar u_n < A &\text{ in $(0,a)$} \\
		- \bar u_n'' = \left(\lambda - \mu_n \bar u_n - k_n' \bar w_n \right) \bar u_n < - B k_n' \bar u_n &\text{ in $(x_0,x_1)$}.
	\end{cases}
\]
By comparison with the super-solution
\[
	x \mapsto \left.A \cosh\left[ (B k'_n)^{1/2} \left(x - \frac{x_0+x_1}{2}\right)\right] \right/  \cosh\left[ (B k'_n)^{1/2} \left(\frac{x_1-x_0}{2}\right)\right]
\]
we have that, for $\eps > 0$ small
\[
	\sup_{x \in [x_0+\eps,x_1-\eps]} \bar u_n \leq A \frac{ \cosh\left[ (B k'_n)^{1/2} \left( \frac{x_1-x_0}{2} -\eps\right)\right] }{ \cosh\left[ (B k'_n)^{1/2} \left(\frac{x_1-x_0}{2}\right)\right]}.
\]
Turning back to the equation in $\bar w_n$, and recalling that $k_n = O(k'_n)$, we can pass to the limit and obtain
\[
	- \bar w_\infty''  = - \omega \bar w_\infty  \qquad \text{ in $[x_0  + \eps ,x_1 - \eps]$}.
\]
We now observe that the previous reasoning holds is true for any $\eps$ and any interval of positivity $[x_0,x_1]$ of $\bar w_\infty$. As a result, in any interval of positivity $[x_0,x_1] \subset [0,a]$ we have
\[
	\text{either} \quad \begin{cases}
		- \bar w_\infty''  = - \omega \bar w_\infty \\
		 \bar w_\infty(x_0) =  \bar w_\infty(x_1) = 0
	\end{cases}\qquad \text{or} \quad
	\begin{cases}
		- \bar w_\infty''  = - \omega \bar w_\infty \\
		 \bar w_\infty(x_0) =  \bar w_\infty'(x_1) = 0
	\end{cases} 
\]
In both case, arguing as in Step 1), we see that $\bar w_\infty \equiv 0$ in $[x_0,x_1]$, meaning that there are no intervals in $(0,a)$ where $\bar w_\infty$ is positive, and thus
\[
	\bar w_n \to 0 \qquad \text{uniformly in $(0,a)$}.
\]
Now we can repeat the reasoning of Step 3): integrating the equation in $\bar u_n$ we obtain
\[
	k_n\int_0^a \bar u_n \bar w_n \sim k'_n\int_0^a \bar u_n \bar w_n = \int_0^a \bar u_n \left( \lambda -\mu_n \bar u_n \right).
\]
From the equation in $\bar w_n$ and the uniform limit proved before, we have
\[
	 0 < C' < \int_0^a (\bar w_n')^2 + \omega \bar w_n^2 = k_n \int_0^a \bar u_n \bar w_n^2 \leq \|\bar w_n \|_{L^\infty} \cdot k_n \int_0^a \bar u_n \bar w_n \to 0,
\]
which is a contradiction.
\end{proof}

\begin{lemma}
Let $(w,u)$ be an arbitrary classical solution of \eqref{eqn model 2 mu 1D} with both components non negative and non zero. There exist two constants $C > 0$ and $\bar \mu > 0$ such that
\[
	 \int_0^a |(\log u)'|^2 > C  \qquad \text{if $\mu \in (0,\bar\mu)$}.
\]
\end{lemma}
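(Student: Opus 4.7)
The strategy is to argue by contradiction. Suppose there exist $\mu_n \to 0^+$ and corresponding positive classical solutions $(w_n, u_n)$ of \eqref{eqn model 2 mu 1D} with $\int_0^a |(\log u_n)'|^2 \to 0$. The plan is to show that in the limit both $u_n$ and $w_n$ are forced into explicit profiles, and to then read off from a cross-testing identity a parameter-free numerical identity that universally fails.

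\textbf{Asymptotic profile of $u_n$.} Setting $v_n := \log u_n$ and $\bar v_n := a^{-1}\int_0^a v_n$, the Poincar\'e inequality combined with the Sobolev embedding $H^1(0,a) \hookrightarrow L^\infty(0,a)$ (valid in dimension one) yields $\|v_n - \bar v_n\|_{L^\infty} \to 0$; writing $c_n := e^{\bar v_n}$ then gives $u_n = c_n(1+o(1))$ uniformly on $(0,a)$. To pin down $c_n$, I note that $w_n$ is strictly positive in $(0,a]$ by the strong maximum principle and solves $-w_n'' = (ku_n - \omega) w_n$ with $w_n(0) = 0$, $w_n'(a) = 0$, so it is the principal eigenfunction of $L_n := -\partial_x^2 + (\omega - ku_n)$ with mixed Dirichlet--Neumann boundary conditions, associated with zero first eigenvalue. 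Continuous dependence of the first eigenvalue on an $L^\infty$-potential forces $c_n$ to remain bounded (otherwise the first eigenvalue would diverge to $-\infty$) and, passing to the limit, $\omega - kc^\ast + \gamma_0 = 0$, where $\gamma_0 = (\pi/(2a))^2$ is the first Dirichlet--Neumann eigenvalue of $-\partial_x^2$ on $(0,a)$. Hence $c^\ast = (\omega + \gamma_0)/k$.

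\textbf{Asymptotic profile of $w_n$.} The identity $k \int_0^a w = \lambda a - \mu \int_0^a u + \int_0^a |(\log u)'|^2$ (obtained right above by dividing the $u$-equation by $u$ and integrating, using the homogeneous Neumann data for $u$), combined with the previous lemma $\mu_n \int_0^a u_n \to 0$ and the contradiction hypothesis, yields $\int_0^a w_n \to \lambda a/k$. A quick bootstrap using $|w_n''| \le (\omega + k\|u_n\|_\infty) w_n$ and the $L^1$-bound gives first $\|w_n'\|_\infty$ bounded, then $\|w_n\|_\infty$ bounded, then $w_n$ bounded in $C^2([0,a])$; extracting a subsequence, $w_n \to w_\infty$ in $C^1([0,a])$ with $w_\infty(x) = A\sin(\pi x/(2a))$ and $A = \lambda\pi/(2k)$ fixed by the $L^1$-constraint.

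\textbf{Cross-testing identity and contradiction.} Testing the $u$-equation against $w_n$ and the $w$-equation against $u_n$, integrating by parts and subtracting produces, with the sole boundary remainder coming from $w_n(0)=0$ but $w_n'(0)\ne 0$,
\[
(\lambda+\omega)\int_0^a u_n w_n \;=\; (k+\mu_n)\int_0^a u_n^2 w_n + k\int_0^a u_n w_n^2 \;-\; w_n'(0)\,u_n(0).
\]
Passing to the limit via $u_n \to c^\ast$ uniformly and $w_n \to w_\infty$ in $C^1$, dividing by $c^\ast > 0$ and substituting $kc^\ast = \omega + \pi^2/(4a^2)$, the contributions carrying $\omega$ and $\pi^2/(4ak)$ cancel exactly, and the identity collapses to the parameter-free relation $1 = \pi^2/8$. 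Since $\pi^2 > 8$, this is impossible, establishing the lemma. The principal technical subtlety is the boundary term $-w_n'(0) u_n(0)$: omitting it would produce a parameter-dependent identity, contradicting only for generic coefficients, whereas including it yields the clean universal contradiction above.
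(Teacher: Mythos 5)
Your proof is correct, and the backbone (contradiction with $\mu_n\to 0$, $\int|(\log u_n)'|^2\to 0$, then osculating $u_n$ to a constant $c^*=(\omega+\gamma_0)/k$ via the eigenvalue constraint that $w_n>0$ is a principal eigenfunction, then identifying the limit shape of $w_n$ with $\sin(\pi x/2a)$) matches the paper's proof. Where you diverge is the closing argument. The paper does not determine the amplitude of the limit profile: it leaves it as an undetermined $C\ge 0$, establishes that $u_n$ is bounded in $H^1(0,a)$, and passes to the weak limit in the $u$-equation to obtain the pointwise identity $0=(\lambda-kC\sin(\pi x/2a))\,c^*$, which is impossible since $c^*>0$ and $\sin$ is non-constant. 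You instead first pin down the amplitude $A=\lambda\pi/(2k)$ from the integral identity $k\int_0^a w=\lambda a-\mu\int_0^a u+\int_0^a|(\log u)'|^2$ together with the preceding lemma, and then derive a contradiction from the Wronskian cross-testing identity $(\lambda+\omega)\int u_nw_n=(k+\mu_n)\int u_n^2w_n+k\int u_nw_n^2-w_n'(0)u_n(0)$, whose limit with your explicit profiles collapses to $1=\pi^2/8$. I verified the algebra: the $\omega\int w_\infty$ terms and the $\pi^2\lambda/(4ak)$ terms both cancel, leaving $\lambda^2a/k=\lambda^2\pi^2a/(8k)$. This is a correct and rather elegant alternative route; the paper's weak-limit argument is arguably more robust (it is a pointwise impossibility valid for any amplitude and would survive small structural perturbations of the equations), while yours is more calculational and hinges on the explicit computation of $A$ and on the numerical fact $\pi^2\ne 8$. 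You were also right to flag the boundary term $-w_n'(0)u_n(0)$ as essential: dropping it would leave an identity with uncancelled $\pi^2/(4ak)$ contributions, which no longer produces a parameter-free contradiction.
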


\begin{proof}
Let us consider a sequence $(w_n, u_n)$ of positive solutions to \eqref{eqn model 2 mu 1D} for $\mu = \mu_n$ such that
\[
	\lim_{n\to+\infty}\int_0^a |(\log u_n)'|^2 = 0
\]
By the embedding theorems we know that
\[
	\lim_{n\to+\infty}\frac{\sup_{x,y \in (0,a)}|u_n(x)-u_n(y)|}{ \|u_n\|_{L^\infty(0,a)} } = 0.
\]
Moreover, since $u_n$ is positive, we have
\begin{equation}\label{eqn rel osc to 0}
	\lim_{n\to+\infty} \frac{\inf_{(0,a)} u_n}{\sup_{(0,a)} u_n} = 1.
\end{equation}
Let us now show that \eqref{eqn rel osc to 0} implies that $u_n$ converges to a positive and finite constant. For this it suffices to show that $\{u_n\}_{n\in\N}$ is uniformly bounded from above and away from zero. Indeed, if $\inf u_n \to +\infty$, then for $n$ sufficiently large
\[
	\begin{cases}
		- w_n''  = \left(- \omega + k u_n\right) w_n > \left( \frac{\pi}{2a}\right)^2 w_n \\
		w_n(0) = w_n'(a) = 0
	\end{cases} \implies w_n \equiv 0.
\]
On the other hand, if $\sup u_n \to 0$, then for $n$ sufficiently large
\[
	\begin{cases}
		- w_n''  = \left(- \omega + k u_n\right) w_n < \left( \frac{\pi}{2a}\right)^2 w_n \\
		w_n(0) = w_n'(a) = 0
	\end{cases} \implies w_n \equiv 0
\]
and in both cases we reach a contradiction with the positivity of $w_n$. This, together with \eqref{eqn rel osc to 0} implies that $u_n$ converges uniformly to a positive constant $C$. Again by the equation in $w_n$, we see that necessarily
\[
	u_n \to \frac{\omega}{k} + \frac{1}{k}\left( \frac{\pi}{2a}\right)^2 \qquad \text{uniformly in $(0,a)$}.
\]
Up to a renormalization, we also infer that
\[
	\left(\|w_n\|_{L^\infty(0,a)}\right)^{-1} w_n \to \sin \left(\frac{\pi}{2a} x\right)
\]
strongly in $H^1(0,a)$ and also uniformly. Integrating the equation in $u_n$, we have
\[
	k \int_{0}^a u_n w_n = \lambda \int_0^a u_n - \mu_n \int_0^a u_n^2.
\]
Thus, we have that $\{w_n\}_{n\in\N}$ is uniformly bounded in $(0,a)$, so that
\[
	w_n \to C \sin \left(\frac{\pi}{2a} x\right)
\]
for a non negative constant $C$. Consequently, using these information in the equation satisfied by $u_n$, we have that $u_n$ is bounded uniformly in $H^1(0,a)$, and we can thus take the weak limit of the equation to derive 
\[
	0 = \left[\lambda - k C \sin \left(\frac{\pi}{2a} x\right) \right] \left[\frac{\omega}{k} + \frac{1}{k}\left( \frac{\pi}{2a}\right)^2  \right],
\]
which is impossible to enforce and gives us the desired contradiction. The proof of Theorem \ref{2 is better} is thereby complete.
\end{proof}

\bigskip
\noindent \textbf{Acknowledgements:}  This work has been supported by the ERC Advanced Grant 2013 n. 321186 ``ReaDi -- Reaction-Diffusion Equations, Propagation and Modelling'' held by Henri Berestycki while Alessandro Zilio was post-doc fellow of the research grant. This work was also partially supported by the French National Research Agency (ANR), within  project NONLOCAL ANR-14-CE25-0013.


\begin{thebibliography}{10}

\bibitem{BerestyckiZilio_NN}
Henri Berestycki and Alessandro Zilio.
\newblock Predators-prey models with competition {P}art {II}: uniform
  regularity estimates.
\newblock {\em In preparation}.

\bibitem{BerestyckiZilio_RR}
Henri Berestycki and Alessandro Zilio.
\newblock Predators-prey models with competition {P}art {III}: classification
  of solutions and rigidity of the model.
\newblock {\em In preparation}.

\bibitem{BerestyckiZilio_FF}
Henri Berestycki and Alessandro Zilio.
\newblock Predators-prey models with competition {P}art {IV}: the properties of
  the parabolic system.
\newblock {\em In preparation}.

\bibitem{BZ_ecology}
Henri Berestycki and Alessandro Zilio.
\newblock Predators-prey models with competition: the emergence of packs and
  territoriality.
\newblock {\em Preprint}.

\bibitem{BuffoniToland}
Boris Buffoni and John Toland.
\newblock {\em Analytic theory of global bifurcation}.
\newblock Princeton Series in Applied Mathematics. Princeton University Press,
  Princeton, NJ, 2003.
\newblock An introduction.

\bibitem{CaffKarLin}
L.~A. Caffarelli, A.~L. Karakhanyan, and Fang-Hua Lin.
\newblock The geometry of solutions to a segregation problem for nondivergence
  systems.
\newblock {\em J. Fixed Point Theory Appl.}, 5(2):319--351, 2009.

\bibitem{ContiTerraciniVerzini_AdvMat_2005}
Monica Conti, Susanna Terracini, and G.~Verzini.
\newblock Asymptotic estimates for the spatial segregation of competitive
  systems.
\newblock {\em Adv. Math.}, 195(2):524--560, 2005.

\bibitem{CTV_indiana}
Monica Conti, Susanna Terracini, and Gianmaria Verzini.
\newblock A variational problem for the spatial segregation of
  reaction-diffusion systems.
\newblock {\em Indiana Univ. Math. J.}, 54(3):779--815, 2005.

\bibitem{ConwayHoffSmoller}
Edward Conway, David Hoff, and Joel Smoller.
\newblock Large time behavior of solutions of systems of nonlinear
  reaction-diffusion equations.
\newblock {\em SIAM J. Appl. Math.}, 35(1):1--16, 1978.

\bibitem{ConwaySmoller}
Edward~D. Conway and Joel~A. Smoller.
\newblock Diffusion and the predator-prey interaction.
\newblock {\em SIAM J. Appl. Math.}, 33(4):673--686, 1977.

\bibitem{Dancer_71}
E.~N. Dancer.
\newblock Bifurcation theory in real {B}anach space.
\newblock {\em Proc. London Math. Soc. (3)}, 23:699--734, 1971.

\bibitem{Dancer_73}
E.~N. Dancer.
\newblock Bifurcation theory for analytic operators.
\newblock {\em Proc. London Math. Soc. (3)}, 26:359--384, 1973.

\bibitem{DancerDu}
E.~N. Dancer and Yi~Hong Du.
\newblock Competing species equations with diffusion, large interactions, and
  jumping nonlinearities.
\newblock {\em J. Differential Equations}, 114(2):434--475, 1994.

\bibitem{DaWaZa_Dynamics}
E.~N. Dancer, Kelei Wang, and Zhitao Zhang.
\newblock Dynamics of strongly competing systems with many species.
\newblock {\em Trans. Amer. Math. Soc.}, 364(2):961--1005, 2012.

\bibitem{Dockery}
Jack Dockery, Vivian Hutson, Konstantin Mischaikow, and Mark Pernarowski.
\newblock The evolution of slow dispersal rates: a reaction diffusion model.
\newblock {\em J. Math. Biol.}, 37(1):61--83, 1998.

\bibitem{HelfHofOstTerr_2009}
B.~Helffer, T.~Hoffmann-Ostenhof, and S.~Terracini.
\newblock Nodal domains and spectral minimal partitions.
\newblock {\em Ann. Inst. H. Poincar{\'e} Anal. Non Lin{\'e}aire},
  26(1):101--138, 2009.

\bibitem{Mimura}
Masayasu Mimura.
\newblock Asymptotic behaviors of a parabolic system related to a planktonic
  prey and predator model.
\newblock {\em SIAM J. Appl. Math.}, 37(3):499--512, 1979.

\bibitem{Rabinowitz_JFA}
Paul~H. Rabinowitz.
\newblock Some global results for nonlinear eigenvalue problems.
\newblock {\em J. Functional Analysis}, 7:487--513, 1971.

\bibitem{RabBere}
P.H. Rabinowitz and H.~Berestycki.
\newblock {\em Th{\'e}orie du degr{\'e} topologique et applications {\`a} des
  probl{\'e}mes aux limites non lin{\'e}aires}.
\newblock Universit{\'e} Paris VI, Laboratoire analyse num{\'e}rique, 1975.

\bibitem{Ruelle}
D.~{Ruelle}.
\newblock {Bifurcations in the presence of a symmetry group}.
\newblock {\em Archive for Rational Mechanics and Analysis}, 51:136--152,
  January 1973.

\bibitem{Sattinger}
D.~Sattinger.
\newblock {\em Branching in the Presence of Symmetry}.
\newblock Society for Industrial and Applied Mathematics, 1983.

\bibitem{SoaveZilio_ARMA}
Nicola Soave and Alessandro Zilio.
\newblock Uniform bounds for strongly competing systems: the optimal
  {L}ipschitz case.
\newblock {\em Arch. Ration. Mech. Anal.}, 218(2):647--697, 2015.

\bibitem{TaTe}
Hugo Tavares and Susanna Terracini.
\newblock Regularity of the nodal set of segregated critical configurations
  under a weak reflection law.
\newblock {\em Calc. Var. Partial Differential Equations}, 45(3-4):273--317,
  2012.

\bibitem{spirals}
Susanna Terracini, Gianmaria Verzini, and Alessandro Zilio.
\newblock Spiraling asymptotic profiles of competition-diffusion systems.
\newblock {\em Preprint}.

\bibitem{Volterra}
Vito Volterra.
\newblock Variations and fluctuations of the number of individuals in animal
  species living together.
\newblock {\em Journal du Cons. Int. Explor. Mer}, 3:5--51, 1928.

\end{thebibliography}

\end{document}